\documentclass[11pt,reqno]{amsart}
\usepackage{amsmath,amsfonts,amsthm,amssymb}
\usepackage{graphicx}
\usepackage{verbatim}
\usepackage{color}
\usepackage{amscd}
\usepackage{verbatim}
\usepackage{hyperref}
\usepackage{enumitem}

\definecolor{Purple}{rgb}{0.8,0,0.3}
\definecolor{Teal}{rgb}{0,0.5,0.5}
\definecolor{Navy}{rgb}{0,0,0.5}
\definecolor{Aqua}{rgb}{0,1,1}
\definecolor{Lime}{rgb}{0,1,0}
\definecolor{Green}{rgb}{0,0.5,0}
\definecolor{Olive}{rgb}{0.5,0.5,0}
\begin{document}
\newcommand{\M}{{\mathcal M}}
\newcommand{\loc}{{\mathrm{loc}}}
\newcommand{\core}{C_c^{\infty}(\Omega)}
\newcommand{\sob}{W^{1,p}(\Omega)}
\newcommand{\sobloc}{W^{1,p}_{\mathrm{loc}}(\Omega)}
\newcommand{\merhav}{{\mathcal D}^{1,p}}
\newcommand{\be}{\begin{equation}}
\newcommand{\ee}{\end{equation}}
\newcommand{\mysection}[1]{\section{#1}\setcounter{equation}{0}}
%%%%%%%%%%%%
\newcommand{\laplace}{\Delta}%{\triangle}
\newcommand{\pl}{\laplace_p}
\newcommand{\grad}{\nabla}%{\bigtriangledown}
\newcommand{\pd}{\partial}
\newcommand{\bo}{\pd}
\newcommand{\csub}{\Subset}
\newcommand{\sm}{\setminus}
\newcommand{\ssm}{:}
\newcommand{\diver}{\mathrm{div}\,}
%%%%%%%%%%
\newcommand\sol[1]{{{#1}}}
%%%%%%%%%%%%%%%
\newcommand{\bea}{\begin{eqnarray}}
\newcommand{\eea}{\end{eqnarray}}
\newcommand{\bean}{\begin{eqnarray*}}
\newcommand{\eean}{\end{eqnarray*}}
\newcommand{\thkl}{\rule[-.5mm]{.3mm}{3mm}}
%%%%%%%%%%%%%%%%%%%%%%%%%%%
\newcommand{\cw}{\stackrel{\rightharpoonup}{\rightharpoonup}}
\newcommand{\id}{\operatorname{id}}
\newcommand{\supp}{\operatorname{supp}}
\newcommand{\wlim}{\mbox{ w-lim }}
\newcommand{\mymu}{{x_N^{-p_*}}}
\newcommand{\R}{{\mathbb R}}
\newcommand{\N}{{\mathbb N}}
\newcommand{\Z}{{\mathbb Z}}
\newcommand{\Q}{{\mathbb Q}}
\newcommand{\abs}[1]{\lvert#1\rvert}
%%%%%%%%%%%
\newtheorem{theorem}{Theorem}[section]
\newtheorem{corollary}[theorem]{Corollary}
\newtheorem{lemma}[theorem]{Lemma}
\newtheorem{proposition}[theorem]{Proposition}
\theoremstyle{definition}
\newtheorem{notation}[theorem]{Notation}
\newtheorem{definition}[theorem]{Definition}
\newtheorem{remark}[theorem]{Remark}
\newtheorem{assertion}[theorem]{Assertion}
\newtheorem{problem}[theorem]{Problem}
\newtheorem{problems}[theorem]{Problems}
%%%%%%%%%%%%%%%%%%
\newtheorem{conjecture}[theorem]{Conjecture}
\newtheorem{question}[theorem]{Question}
\newtheorem{example}[theorem]{Example}
\newtheorem{Thm}[theorem]{Theorem}
\newtheorem{Lem}[theorem]{Lemma}
\newtheorem{Pro}[theorem]{Proposition}
\newtheorem{Def}[theorem]{Definition}
\newtheorem{Exa}[theorem]{Example}
\newtheorem{Exs}[theorem]{Examples}
\newtheorem{Rems}[theorem]{Remarks}
\newtheorem{Rem}[theorem]{Remark}

\newtheorem{Cor}[theorem]{Corollary}
\newtheorem{Conj}[theorem]{Conjecture}
\newtheorem{Prob}[theorem]{Problem}
\newtheorem{Ques}[theorem]{Question}
\newtheorem*{corollary*}{Corollary}
\newtheorem*{theorem*}{Theorem}
\newcommand{\pf}{\noindent \mbox{{\bf Proof}: }}

%%%%%%%%%%%%%%%%%%
%\newenvironment{proof}{{\bf Proof.}}{\hfill $\bowtie$\vskip4mm}

\renewcommand{\theequation}{\thesection.\arabic{equation}} %
%\catcode`@=11 \@addtoreset{equation}{section} \catcode`@=12
%%%%%%%%%%%%%%%%%%%%%%
\newcommand{\Real}{\mathbb{R}}
\newcommand{\real}{\mathbb{R}}
\newcommand{\Nat}{\mathbb{N}}
\newcommand{\ZZ}{\mathbb{Z}}
\newcommand{\CC}{\mathbb{C}}
\newcommand{\Pess}{\opname{Pess}}
\newcommand{\Proof}{\mbox{\noindent {\bf Proof} \hspace{2mm}}}
\newcommand{\mbinom}[2]{\left (\!\!{\renewcommand{\arraystretch}{0.5}
\mbox{$\begin{array}[c]{c}  #1\\ #2  \end{array}$}}\!\! \right )}
\newcommand{\brang}[1]{\langle #1 \rangle}
\newcommand{\vstrut}[1]{\rule{0mm}{#1mm}}
\newcommand{\rec}[1]{\frac{1}{#1}}
\newcommand{\set}[1]{\{#1\}}
\newcommand{\dist}[2]{$\mbox{\rm dist}\,(#1,#2)$}
\newcommand{\opname}[1]{\mbox{\rm #1}\,}
\newcommand{\mb}[1]{\;\mbox{ #1 }\;}
\newcommand{\undersym}[2]
 {{\renewcommand{\arraystretch}{0.5}  \mbox{$\begin{array}[t]{c}
 #1\\ #2  \end{array}$}}}
\newlength{\wex}  \newlength{\hex}
\newcommand{\understack}[3]{%
 \settowidth{\wex}{\mbox{$#3$}} \settoheight{\hex}{\mbox{$#1$}}
 \hspace{\wex}  \raisebox{-1.2\hex}{\makebox[-\wex][c]{$#2$}}
 \makebox[\wex][c]{$#1$}   }%
%%Macros for changing font size in math.
\newcommand{\smit}[1]{\mbox{\small \it #1}}% only for letters, numbers
\newcommand{\lgit}[1]{\mbox{\large \it #1}}% only for letters, numbers
\newcommand{\scts}[1]{\scriptstyle #1}
\newcommand{\scss}[1]{\scriptscriptstyle #1}
\newcommand{\txts}[1]{\textstyle #1}
\newcommand{\dsps}[1]{\displaystyle #1}
%%%%%%%%%%%%%%%%%%%%%%%%%%%%%%%%%%%%%%%%%%%%%%
\newcommand{\dx}{\,\mathrm{d}x}
\newcommand{\dy}{\,\mathrm{d}y}
\newcommand{\dz}{\,\mathrm{d}z}
\newcommand{\dt}{\,\mathrm{d}t}
\newcommand{\dr}{\,\mathrm{d}r}
\newcommand{\du}{\,\mathrm{d}u}
\newcommand{\dv}{\,\mathrm{d}v}
\newcommand{\dV}{\,\mathrm{d}V}
\newcommand{\ds}{\,\mathrm{d}s}
\newcommand{\dS}{\,\mathrm{d}S}
\newcommand{\dk}{\,\mathrm{d}k}

\newcommand{\dH}{\,\mathrm{d}\mathcal{H}^{n-1}}

\newcommand{\dphi}{\,\mathrm{d}\phi}
\newcommand{\dtau}{\,\mathrm{d}\tau}
\newcommand{\dxi}{\,\mathrm{d}\xi}
\newcommand{\deta}{\,\mathrm{d}\eta}
\newcommand{\dsigma}{\,\mathrm{d}\sigma}
\newcommand{\dtheta}{\,\mathrm{d}\theta}
\newcommand{\dnu}{\,\mathrm{d}\nu}

%%%%%%%%%%%%%%%%%%%%%%%%%%%%%%%Macros for Greek letters.
\def\ga{\alpha}     \def\gb{\beta}       \def\gg{\gamma}
\def\gc{\chi}       \def\gd{\delta}      \def\ge{\varepsilon}
\def\gth{\theta}                         \def\vge{\varepsilon}
\def\gf{\phi}       \def\vgf{\varphi}    \def\gh{\eta}
\def\gi{\iota}      \def\gk{\kappa}      \def\gl{\lambda}
\def\gm{\mu}        \def\gn{\nu}         \def\gp{\pi}
\def\vgp{\varpi}    \def\gr{\rho}        \def\vgr{\varrho}
\def\gs{\sigma}     \def\vgs{\varsigma}  \def\gt{\tau}
\def\gu{\upsilon}   \def\gv{\vartheta}   \def\gw{\omega}
\def\gx{\xi}        \def\gy{\psi}        \def\gz{\zeta}
\def\Gg{\Gamma}     \def\Gd{\Delta}      \def\Gf{\Phi}
\def\Gth{\Theta}
\def\Gl{\Lambda}    \def\Gs{\Sigma}      \def\Gp{\Pi}
\def\Gw{\Omega}     \def\Gx{\Xi}         \def\Gy{\Psi}
%%%%%%%%%%%%%%%%%%%%%%%%%%%%%%%%%%%%%%%%%%%%%%%%%%%%%%%%%%%%%%%%%%

\renewcommand{\div}{\mathrm{div}}
\newcommand{\red}[1]{{\color{red} #1}}
\newcommand{\blue}[1]{{\color{blue} #1}}

\newcommand{\eat}[1]{}
\newcommand{\Hmm}[1]{\leavevmode{\marginpar{\tiny%
$\hbox to 0mm{\hspace*{-0.5mm}$\leftarrow$\hss}%
\vcenter{\vrule depth 0.1mm height 0.1mm width \the\marginparwidth}%
\hbox to
0mm{\hss$\rightarrow$\hspace*{-0.5mm}}$\\\relax\raggedright #1}}}

%\begin{titlepage}
%\begin{document}
	\title[Optimal Hardy weights via Green potential]{Strictly positive optimal Hardy weights on manifolds and graphs via Green potential }
	
	%\iffalse
	\author {Ujjal Das}
	
	\address {Ujjal Das, School of Mathematics,  National Institute of Science Education and Research, Bhubaneswar, Odisha, India}
	
	\email {udas@niser.ac.in}
	\author[M.~Keller]{Matthias Keller}
	\address{M.~Keller,  Institut f\"ur Mathematik, Universit\"at Potsdam,
		14476  Potsdam, Germany}
	\email{matthias.keller@uni-potsdam.de}	

	\author{Yehuda Pinchover}
	\address{Yehuda Pinchover,
		Department of Mathematics, Technion - Israel Institute of
		Technology,   Haifa, Israel}
	\email{pincho@technion.ac.il}
	
	%\fi
	%%%%%%%%%%%%%%	
	\begin{abstract}
		Consider a subcritical functional $Q$ associated with a $p$-Schr\"od\-inger operator defined on a domain $\Gw$ in noncompact Riemannian manifold. Employing the supersolution construction, we obtain a family of strictly positive critical Hardy weights for $Q$. The construction is based on the Green potential, with a strictly positive charge, which is assumed to vanish at infinity. An interesting new feature of this family,  depending on a parameter $0\leq \vge\leq 1$, is the transition from positive-criticality to null-criticality at $\vge=0$. Moreover, we show that the best Hardy constant for this family is given by $c=[(p-1)/p]^{p-1}$, and that for $\vge=0$ the corresponding Hardy weight is optimal. We furthermore, prove an analogous result on graphs.
		
		\medskip
		
		\noindent  2020  \! {\em Mathematics  Subject  Classification.}
		Primary  \!  35A23; Secondary 47F05, 49J40, \\[1mm]
		\noindent {\em Keywords:} Criticality theory, Green potential, Hardy inequality, optimal Hardy weight, positive solutions.
	\end{abstract}
	
	\maketitle

	\section{Introduction}

Hardy inequalities are a fundamental tool in the analysis of partial differential equations, and have been extensively studied in various contexts. A question of Agmon \cite{Ag82} which asked to find Hardy weights for second-order elliptic operators, which are ``as large as possible" was answered in the linear case in \cite{DFP}, and subsequently extended to the quasilinear case in \cite{DP,V}. A {shortcoming} of the weights, which are constructed there, is that they are not necessarily strictly positive. Such positivity plays an important role in applications, for example in deriving {spectral results} and Rellich-type inequalities, and, hence, in controlling the growth of solutions. 

The aim of this paper is to construct a one-parameter family of critical Hardy weights which are strictly positive. We employ the supersolution construction with respect to Green potentials with a strictly positive charge. An interesting new feature of this family, which depends on a parameter $0\leq \vge\leq 1$, is that it has a  transition from positive-criticality to null-criticality at $\vge=0$. In particular, the corresponding Hardy inequality is optimal, in the sense of \cite{DP,V} at  $\vge=0$. We also show that the best Hardy constant for this family is given by $c=[(p-1)/p]^{p-1}$.  We next give a more detailed description of the setting and the problem in the continuum setting and later in Section~\ref{sec_disc} we discuss the discrete case complementing results of \cite{F}.

\section{Setting} \label{setting}
Fix $1<p<\infty$ and a domain $\Gw$ in $\R^n$ or in a non-compact Riemannian manifold.  Let $A\in L^\infty_\loc(\Gw,\R^{n\times n})$ be a symmetric locally uniform positive definite matrix, and $V\in L^q_{\mathrm{loc}}(\Omega,\R)$, where $q>n/p$. The following quasilinear operator $Q'_{A,V}$ on $\Omega$ is called a {\em $p$-Schr\"{o}dinger operator} and is  given by  
\begin{equation*}%\label{eqQ}
Q'(u):=Q'_{A,V}(u)=-\div(|\nabla u|_{A}^{p-2}A\nabla u)+V|u|^{p-2}u,
\end{equation*}
where $|\xi|_{A}:=\sqrt{A(x)\xi\cdot\xi}$.
Denote by
$$Q(\varphi):=Q_{A,V}(\varphi)=\int_\Omega \big(|\nabla \varphi|_A^p+V|\varphi|^p\big)\dx$$ the associated energy functional defined on $C^\infty_c(\Gw)$. We say that the functional $Q$ is {\em nonnegative} 
(in short $Q\geq 0$) in $\Gw$  if $Q(\varphi)\geq 0$ for all  $\varphi\in C^\infty_c    (\Gw)$. The nonnegative functional $Q$ is said to be {\em subcritical} (respectively {\em critical}) in $\Gw$ if there exists (respectively, does not exist) $0\lneqq W\in L^q_{\mathrm{loc}}(\Omega)$, {$q>n/p$,} such that $Q_{A,V-W}\geq 0$  in $\Gw$. Such $W$ is said to be a {\em Hardy weight} for the subcritical functional $Q$ in $\Gw$.

Let $W\gneqq 0$ be a Hardy weight for $Q\geq 0$ in $\Gw$.  Set
\begin{align*}
\lambda_0(Q,W,\Omega)&:=\sup\{\gl\in\Real\mid Q_{A,V-\gl W}\geq 0 \; \mbox{ in }  \Gw \},\\[2mm]
\lambda_\infty(Q,W,\Omega)&:=\sup\{\gl\in\Real\mid \exists K\Subset\Gw\mbox{ s.t. } Q_{A,V-\gl W}\geq 0 \; \mbox{ in }  \Gw\setminus K \},
\end{align*}
which are called respectively, the {\em best Hardy constant} and {\em best Hardy constant at infinity} for  the Hardy-type inequality 
\begin{equation*}%\label{ineq-hardy}
Q(\varphi) \geq \gl \int_{\Omega} W|\varphi|^p\dx.
\end{equation*}
Next, we introduce the notion of {\em optimal Hardy-weights} for $Q$ in $\Gw$. A variant of this definition was first introduced in \cite{DFP} in the linear setting and subsequently extended in \cite{DP,V} to the quasilinear case; in those works, optimal Hardy weights were constructed.
\begin{definition}\label{def_opt}
Suppose that $Q=Q_{A,V}\geq 0$ in $\Gw$. Assume that  $W\in L^q_{\mathrm{loc}}(\Omega)$, $q>n/p$,   is a Hardy weight. %Set $\lambda_0:=\lambda_0(Q,W,\Omega)$.

We say that $ W$ is an {\em optimal Hardy-weight} for the energy functional $Q$ in $\Gw$ if the following two conditions hold true.
\begin{enumerate}

 \item The functional $Q_{A,V-  W}$ is {\em critical in} $\Gw$, i.e., for any $W_1\gneqq  W$, the Hardy-type inequality
 $$Q(\varphi) \geq \int_{\Omega} W_1 |\varphi|^p\dx $$
does not hold for all $\varphi\in C_c^\infty(\Omega)$. The criticality of $Q_{A,V-  W}$ implies that the equation $Q'_{A,V- W}(u)=0$ in $\Omega$ admits, up to a multiplicative positive constant, a unique positive supersolution $v>0$ which is in fact a solution, and it is called the {\em Agmon ground state}, cf. Lemma~\ref{lem_crit} below.

 \item The functional $Q_{A,V-  W}$ is {\em null-critical} in $\Gw$ with respect to $W$, i.e., it is critical and the Agmon ground state $v$ of $Q_{A,V- W}$ in $\Gw$ satisfies
$$\int_{\Gw} W|v|^p\dx=\infty.$$
 In particular, the variational problem
  \begin{equation*}%\label{var_prob}
\inf_{v\in \mathcal{D}^{1,\,p}_{Q}(\Omega)} \frac{Q(\vgf)}{\int_{\Omega^\star}|\vgf|^pW\dx} 
\end{equation*}
  does not admit a minimizer. Here $\mathcal{D}^{1,\,p}_{Q}(\Omega)$ is the generalized Beppo Levi space, which is defined as the completion of $C_c^\infty(\Omega)$ with respect to the norm $\|\varphi\|_Q:=Q_{A,V^+}(\varphi)^{1/p}$ associated with $Q$ \cite[Definition~1.3]{Das_Pinchover1}.
\end{enumerate}
{A critical functional $Q_{A,V-  W}$ with a ground state $v$ is {\em positive-critical} in $\Gw$ with respect to $W$ if} 
  $$\int_{\Gw} W|v|^p\dx<\infty.$$ 
\end{definition}
\begin{remark}\label{ab_not_c}
(a) Let $W$ be an optimal Hardy weight for $Q=Q_{A,V}$ in $\Gw$. It turns out \cite[Corollary~3.7]{KP} that in this case, $\lambda_\infty\left(Q,W,\Omega \right)=\lambda_0\left(Q,W,\Omega \right)=1$. That is, $\gl_0=1$ is also the best constant for Hardy inequality  restricted to functions $\vgf$ that are compactly supported in any fixed neighborhood of infinity in $\Gw$ (in the sense of one-point compactification).  
 Therefore, the above definition of optimal Hardy weight is equivalent to the definitions in \cite{DFP,DP}.

(b) As mentioned before, an optimal Hardy weight given in \cite{DFP,DP} may vanish somewhere inside $\Gw$.
We note that the fundamental question of identifying sufficient conditions that guarantee the strict positivity of the constructed optimal Hardy weight was not answered in \cite{DFP}, nor in the subsequent literature (see \cite[Remark~9.1]{DFP}).

(c) If $W$ is a critical Hardy weight of $Q_{A,V}$ with a ground state $v$, and let $V_1 \in L^q_{\mathrm{loc}}(\Omega)$, $q>n/p$, such that $V_1+W\gneqq 0$, then clearly, $V_1+W$ is a critical Hardy weight for $Q_{A,V+V_1}$ with a ground state $v$. Moreover, if $\int_{\Gw} (V_1+W)|v|^p\dx=\infty$, then $V_1+W$ is an optimal Hardy weight for $Q_{A,V+V_1}$. 

%(d) A critical functional $Q_{A,V-\gl_0 W}$ is said to be {\em positive-critical} in $\Gw$ with respect to $W$ if  the Agmon ground state $v$ of $Q_{A,V-\gl_0W}$ in $\Gw$ satisfies$$\int_{\Gw} W|v|^p\dx<\infty.$$
\end{remark}

For $0\lneqq f \in L^{p/(p-1)}_\loc(\Gw)$, a minimal positive solution $G_f $ of the equation  $Q'_{A,V}(u)=f$ in $\Gw$ is called  {\em Green potential of $Q_{A,V}$ with charge $f$} which is unique by Lemma \ref{lem_Green_pot}. Minimality means that any other positive solution $v$ of the equation  $Q'_{A,V}(u)=f$ satisfies $v\geq G_f$.

Using the supersolution construction approach for Green potentials with compactly supported charge, Idan Versano obtained in \cite{V} optimal Hardy weights for $p$-Schr\"{o}dinger operators with nonzero potential $V$ (see also \cite{DP} where the authors study the $p$-Laplacian case).

\begin{theorem}[{cf. \cite[Theorem~1.1]{V}}]\label{thm:main_non_2}
Let $1<p<\infty$, $\Gw\subset \R^n$, $n\geq 2$, be a domain, $c_0:=(p/(p-1))^{p-1}$, and  $0\lneqq\varphi \in  C_c^\infty(\Gw)$. Let
	$Q_{A,c_0V}$ be a subcritical operator in $\Gw$.  
	  Let $G_{\vgf}$ be a Green potential  in $\Gw$ with charge $\vgf$  satisfying 
	\begin{equation*}
	\lim\limits_{x\to \bar{\infty}}G_{\vgf}(x)=0, \quad
	\int_{\Gw } |V| G_{\vgf}(x)^{p-1}\dx<\infty, \mbox{ and } \int_{\Gw }V G_{\vgf}(x)^{p-1}\dx < 0,
	\end{equation*}
	where $\bar{\infty}$ denotes the ideal point in the one-point compactification of $\Gw$. 
	Then the operator $Q_{A,V}$ admits an optimal Hardy-weight in $\Gw$.
\end{theorem}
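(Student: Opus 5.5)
We follow Versano's supersolution construction, written in terms of the Green potential $G:=G_{\vgf}$. The first step is to record what $G$ satisfies. It is a positive solution of $Q'_{A,V}(G)=\vgf$ in $\Gw$, so it solves $Q'_{A,V}(G)=0$ outside $K:=\supp\vgf$. It tends to $0$ at $\bar\infty$, and $\sup G<\infty$, since $G$ is continuous and vanishes at infinity.

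The second step is to look for a Hardy weight of the form $W=Q'_{A,V}(v)/v^{p-1}$, where $v=F(G)$ and $F(t)=t^{(p-1)/p}$ is modified near the top of the range of $G$, in the spirit of \cite{DFP,DP}. The exponent $(p-1)/p$ is what produces the constant $c_0^{-1}$.
\begin{itemize}
\item A direct chain-rule computation outside $K$ gives
\[
Q'_{A,V}(F(G))=-F'(G)^{p-1}\,\div(|\nabla G|_A^{p-2}A\nabla G)-(p-1)F'(G)^{p-2}F''(G)|\nabla G|_A^p+VF(G)^{p-1}.
\]
\item Using the equation for $G$, the first term becomes $-F'(G)^{p-1}VG^{p-1}$.
\item Hence $W=\bigl(\tfrac{p-1}{p}\bigr)^p|\nabla\log G|_A^p+V\bigl(1-c_0^{-1}\bigr)$. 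Here the hypothesis on $c_0V$ enters: rewriting $Q_{A,V}=Q_{A,V-c_0^{-1}V}+c_0^{-1}V$ and invoking Remark~\ref{ab_not_c}(c) lets us move the potential terms between $V$ and $W$.
\end{itemize}
The resulting candidate, with the $V$-part absorbed, is
\[
W=\Bigl(\frac{p-1}{p}\Bigr)^{p}\Bigl|\frac{\nabla G}{G}\Bigr|_A^p+\text{(potential correction)}\quad \text{outside } K.
\]
Inside $K$ we glue using a smooth $F$ with $F'>0$, so that $W\ge0$ there up to the $V$-part. By the Agmon–Allegretto–Piepenbrink principle, the positive supersolution $v$ then shows that $W$ is a Hardy weight.

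The third step is criticality. Here $v=G^{(p-1)/p}$ is a positive solution of $Q'_{A,V-W}(u)=0$ near infinity, and we must show it has minimal growth at $\bar\infty$, so that by Lemma~\ref{lem_crit} it is the Agmon ground state. The natural route is to build a null-sequence by truncation, taking $\phi_k=\min(v,\cdot)$ cut off on the level sets $\{\ge G\le \ge^{-1}\}$. The condition $\int|V|G^{p-1}<\infty$ controls the potential contributions, and the coarea formula in the variable $t=G$ controls the gradient contributions. The key fact is that the flux $\int_{\{G=t\}}|\nabla G|_A^{p-2}A\nabla G\cdot\nu$ is, by Green's formula applied to $Q'(G)=\vgf$, equal to $\int\vgf\, G^{0}$ minus the $V$-term. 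It is therefore bounded, and it has a positive limit precisely because $\int VG^{p-1}<0$ makes the total flux positive.

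The fourth step is null-criticality: $\int_{\Gw} W v^p\dx=\infty$. Since $Wv^p\approx |\nabla G|_A^p/G$ up to constants, the coarea formula gives
\[
\int_{\{G<\ge\}} W v^p\dx\approx \int_0^{\ge}\frac{\text{flux}(t)}{t}\dt,
\]
which diverges logarithmically because the flux tends to a positive limit. This is the main obstacle: identifying the flux limit, and showing it is positive, using the integrability and sign hypotheses on $V$. Together with the positivity of $W$ near infinity and the gluing inside $K$, this yields the optimality in Definition~\ref{def_opt}.
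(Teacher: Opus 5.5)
Your second step has a genuine gap: you take $G$ to be the Green potential of the wrong operator. With $Q'_{A,V}(G)=\vgf$, your own computation gives $Q'_{A,V}(G^{(p-1)/p})/G^{(p-1)^2/p}=W_{\mathrm{gr}}+(1-c_0^{-1})V$, where
\[
W_{\mathrm{gr}}:=\Bigl(\frac{p-1}{p}\Bigr)^{p}\frac{|\nabla G|_A^p}{G^p}+\Bigl(\frac{p-1}{p}\Bigr)^{p-1}\frac{\vgf}{G^{p-1}} .
\]
The term $(1-c_0^{-1})V$ has no sign. The hypothesis $\int_\Gw VG^{p-1}\dx<0$ forces $V^-\not\equiv0$, and nothing prevents $V<0$ where $W_{\mathrm{gr}}$ is small, e.g.\ at critical points of $G$ off $\supp\vgf$. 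So your $v$ only shows that $W_{\mathrm{gr}}$ is a critical (indeed optimal) Hardy weight for $Q_{A,c_0^{-1}V}$, not for $Q_{A,V}$. Remark~\ref{ab_not_c}(c) moves it to $Q_{A,V}$ only under the extra hypothesis $W_{\mathrm{gr}}+(1-c_0^{-1})V\gneqq0$, which is not assumed; the paper's remark on optimal Hardy weights for $Q_{A,c_0V}$ needs exactly such an assumption. Modifying $F$ near the top of the range of $G$, or gluing in $K$, cannot help, since the $V$-term lives on all of $\Gw$. A telltale sign is that the subcriticality of $Q_{A,c_0V}$ is never actually used in your argument.

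The missing idea is that $G_\vgf$ is the Green potential of $Q'_{A,c_0V}$, i.e.\ $Q'_{A,0}(G)+c_0VG^{p-1}=\vgf$. Producing it is exactly what the subcriticality of $Q_{A,c_0V}$ is for (compare Theorem~\ref{pro_opt}). For $\gamma=(p-1)/p$ one has $\gamma^{p-1}c_0=1$, so Lemma~\ref{weak_lapl} gives $Q'_{A,V}(G^\gamma)/G^{\gamma(p-1)}=W_{\mathrm{gr}}\gneqq0$ on all of $\Gw$. The potential cancels identically, and no modification of $F$ or gluing is needed, because $G$ is bounded and the charge term is nonnegative (Corollary~\ref{cor_1} with $\vge=0$). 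Within the paper the statement is then the case $\vge=0$, $f=\vgf$ of Theorem~\ref{pro_crit}. The flux through $\{G=t\}$ equals $\int_{\{G>t\}}(\vgf-c_0VG^{p-1})\dx$. It is bounded above since $\vgf, V^-G^{p-1}\in L^1$, and bounded below for small $t$ since $\int_\Gw VG^{p-1}\dx\le0$ (Lemma~\ref{coarea grad G}). Your fourth step then goes through as written.

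In your third step, note that by Lemma~\ref{simpl_energy} the energy of $vw$ is comparable to a simplified energy with no potential terms at all. The cut-off must be logarithmic in $v$, e.g.\ $w_k=\psi_k(v)$ with $\psi_k(t)=2+\log t/\log k$ on $[k^{-2},k^{-1}]$. By Proposition~\ref{pro_key} with $\eta=0$, the gradient term is then bounded by $C\int t^{p-1}|\psi_k'(t)|^p\dt\le C(\log k)^{1-p}$. A cut-off supported on a single scale $[\epsilon,2\epsilon]$ only gives a bound of order one.
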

A natural and challenging problem emerges from the above result: 

\medskip

\begin{quote}{\em Find a strictly positive optimal Hardy weight. }
\end{quote}

\medskip

A positive answer will imply in the linear case, i.e., $p=2$, various applications in the spectral theory of Schr\"{o}dinger operators (see Section \ref{sec_lin}). 

The  aim of the present paper is to give a positive answer to the above problem. The following result is a special case of Theorem~\ref{pro_crit} proven in Section~\ref{sec_constr}.

\begin{theorem}\label{pro_opt}	
Let $1<p<\infty$, $\Gw\subset \R^n$, $n\geq 2$, be a domain, and $c_0:=(p/(p-1))^{p-1}$. Suppose that $Q_{A,c_0V}$ is subcritical in $\Gw$. Consider  a  Green potential $G_f$ of $Q_{A,c_0V}$ with charge $0< f\in L^1(\Gw)\cap L^{p/(p-1)}_\loc(\Gw)$. 
	Let $v_0=(G_f )^{(p-1)/p}$, and 
	$$W_{1}:=\!\frac{Q'_{A,0}(v_0)+Vv_0^{p-1}}{v_0^{p-1}} = \left(\frac{p-1}{p}\right)^p
	\frac{|\nabla G_f |_A^p}{G_f ^p}+
	\left(\frac{p-1}{p}\right)^{p-1} \!\!\frac{f}{G_f ^{p-1}}\,.$$
	Assume that
	\begin{equation*}%\label{eq:assum_G_G_V2}
	\lim\limits _{x\to \bar{\infty}}G_{f}=0,	 		
\quad		\int_{\Gw}\! V^-|G_{f}|^{p-1} \dx<\infty, \quad\mbox{ and } 
\quad	\int_{\Gw}\!VG_{f}^{p-1} \dx\leq 0.
	\end{equation*}
 Then $W_1$ is a strictly positive optimal Hardy weight for $Q_{A,V}$ with a ground state $v_0$. 
\end{theorem}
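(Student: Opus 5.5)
The plan follows the supersolution construction of \cite{DP,V}, with the compactly supported charge replaced by the strictly positive charge $f$. First comes the explicit formula for $W_1$. Write $G=G_f$, $\alpha=(p-1)/p$ and $v_0=G^{\alpha}$, so that $\nabla v_0=\alpha G^{\alpha-1}\nabla G$. A direct computation of $Q'_{A,0}(v_0)=-\div(|\nabla v_0|_A^{p-2}A\nabla v_0)$ gives two contributions. One is the term $\alpha^{p-1}G^{-1/p}\big(-\div(|\nabla G|_A^{p-2}A\nabla G)\big)$. The other is a gradient term: since $(\alpha-1)(p-1)=-(p-1)/p$, the derivative of the factor $G^{(\alpha-1)(p-1)}$ produces $\alpha^{p-1}\frac{p-1}{p}G^{-1/p-1}|\nabla G|_A^p$. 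Substituting $-\div(|\nabla G|_A^{p-2}A\nabla G)=f-c_0VG^{p-1}$ and dividing by $v_0^{p-1}=G^{(p-1)^2/p}$, the potential term becomes $-\alpha^{p-1}c_0V=-V$, which cancels the $+V$ in the definition of $W_1$. This yields the stated formula. In the continuum case the computation is justified weakly, using local regularity of $G$: it is $C^{1,\gamma}_\loc$ and locally bounded away from zero by the Harnack inequality. Strict positivity is then immediate, because $f>0$ and $G<\infty$ give $W_1\ge \alpha^{p-1}f/G^{p-1}>0$. Moreover $W_1\in L^q_\loc$ in the sense required, since $G$ is locally bounded above and below.

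Second, criticality. By construction $v_0$ is a positive solution of $Q'_{A,V-W_1}(u)=0$, so the Agmon--Allegretto--Piepenbrink type theorem shows $Q_{A,V-W_1}\ge0$; thus $W_1$ is a Hardy weight. To prove criticality I will use the null-sequence characterization from Lemma~\ref{lem_crit}. Let $\eta_k(t)$ be cut-offs of $t=G$ equal to $1$ on $\{G>1/k\}$, decaying logarithmically in $G$ on $\{1/k^2<G<1/k\}$, and $0$ below. Set $u_k=v_0\,\eta_k(G)$; these are compactly supported because $G\to0$ at $\bar\infty$. By the simplified energy (Picone-type) identity for $p$-Laplacians with ground state $v_0$, $Q_{A,V-W_1}(v_0\eta)$ is comparable to $\int v_0^p|\nabla\eta|_A^2(\eta|\nabla v_0|_A+v_0|\nabla\eta|_A)^{p-2}$. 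Since $v_0^p=G^{p-1}$ and $|\nabla\eta_k|\sim |\nabla G|/(G\log k)$, this reduces to integrals of the form $(\log k)^{-2}\int_{\{1/k^2<G<1/k\}} G^{-1}|\nabla G|_A^p\,G^{\dots}$. These are controlled by the coarea formula together with the flux identity obtained from testing the equation of $G$ with functions of $G$. On level sets, $\int_{\{G=t\}}|\nabla G|_A^{p-1}\,\dS$ is bounded by $\int_{\{G<t\}}(f-c_0VG^{p-1})$, and that is finite by $f\in L^1$ and $\int V^-G^{p-1}<\infty$. Hence $Q_{A,V-W_1}(u_k)\to0$ while $u_k\to v_0$ locally, which gives criticality with ground state $v_0$.

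Third, null-criticality, which should come from the same flux identity. We have $\int W_1v_0^p=\int \big(\alpha^p|\nabla G|_A^p G^{-1}+\alpha^{p-1}f\big)\dx$. Using coarea, $\int_{\{G<1\}}|\nabla G|_A^pG^{-1}\dx=\int_0^1 t^{-1}\Phi(t)\,dt$, where $\Phi(t)=\int_{\{G=t\}}|\nabla G|_A^{p-1}\,\dS$. Testing the equation with truncations of $G$ should identify $\Phi(t)$ with $\int_{\{G>t\}}(f-c_0VG^{p-1})$, or at least bound it from below by that quantity. As $t\to0$ this tends to $\int(f-c_0VG^{p-1})\ge\int f>0$, where the assumption $\int VG^{p-1}\le0$ enters. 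So $\Phi(t)\ge c>0$ near $0$, and $\int_0 t^{-1}dt=\infty$ gives $\int W_1v_0^p=\infty$.

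The main obstacle is making the flux identity rigorous, including its sign and the limit at small levels, for a possibly irregular level-set structure near $\bar\infty$ and a non-compactly supported charge. The approach I would try first is to work with Lipschitz truncations $\min(G,t)$ and $(G-t)^+$ as test functions, approximate through an exhaustion of $\Gw$, and use the minimality of $G$, so that its Dirichlet approximants vanish on the boundary of the exhaustion, to pass to the limit.
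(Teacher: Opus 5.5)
Your proposal is essentially the paper's proof, since Theorem~\ref{pro_opt} is Theorem~\ref{pro_crit} with $\vge=0$: the supersolution $v_0=G_f^{(p-1)/p}$, the logarithmic cut-off null-sequence estimated via the simplified energy and the level-set flux bound, and null-criticality from $\int_{\{G_f>t\}}(f-c_0VG_f^{p-1})\dx\ge\frac12\int_\Gw f\dx$ for small $t$, which is exactly where $\int_\Gw VG_f^{p-1}\dx\le 0$ is used. Your test-function derivation of the flux identity (testing with $\min\{1,(G_f-t)^+/\delta\}$) is a valid and more elementary substitute for the Gauss--Green/Sard argument of Lemma~\ref{coarea grad G}, and it needs no exhaustion or minimality because $\{G_f>t\}\Subset\Gw$; just fix two slips: the flux through $\{G_f=t\}$ equals $\int_{\{G_f>t\}}(f-c_0VG_f^{p-1})\dx$, not an integral over $\{G_f<t\}$ as in your second paragraph, and the powers in your computation of $Q'_{A,0}(v_0)$ are $G^{-(p-1)/p}$ and $G^{-(p-1)/p-1}$, not $G^{-1/p}$ and $G^{-1/p-1}$.
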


The outline of the present paper is as follows. In the next section, we recall some preliminary results from the criticality theory of $p$-Schr\"{o}dinger operators. In Section~\ref{sec_coarea}, we prove a crucial lemma using the Gauss-Green theorem to bound a quotient of gradients of the Green potential on the boundary of level sets.
In Section~\ref{sec_constr}, we give the proof of Theorem~\ref{pro_crit} which implies Theorem~\ref{pro_opt}. In Section~\ref{sec_disc}, we extend our results to the discrete case. Finally, in Section~\ref{sec_lin}, we discuss some applications of Theorem~\ref{pro_opt} and formulate several open questions arising in the linear case, i.e., when (p=2).

\medskip

Throughout the paper, $C$ denotes a positive constant that may change from line to line. Let $f$ and $g$ be positive functions, we write $f\asymp g$ if there exists a positive constant $C$ such that $C^{-1}f\leq g \leq Cf$. For a set $A$, we write $1_A$ for the characteristic function of $A$ and $1$ for the constant function equal to 1. The inclusion $O {\Subset} \Omega$ means that $O$ is open and $\overline{O}$ is compact in $\Omega$. We denote the conjugate exponents of $1<p<\infty$ by $p'$, so  $1/p+1/p'=1$. The $(n-1)$-dimensional Hausdorff measure will be denoted by $\mathcal{H}^{n-1}$. The limit $x \rightarrow \bar \infty$  is understood in the topology of the one-point compactification $X \cup \{\infty\}$ of a topological space $X$.

\mysection{Preliminaries}\label{sec_prelim}
First we recall the well known Agmon-Allegretto-Piepenbrink (AAP) theorem. Let $Q=Q_{A,V}$ and $Q'=Q_{A,V}'$ be defined as in Section \ref{setting}.
\begin{theorem}[AAP theorem,{ \cite[Theorem~4.3]{PP}}]\label{theorem:AP}  The following assertions are equivalent:
	\begin{enumerate}
	\item $Q\geq 0$  on $ C_c^\infty(\Omega)$.

	\item The equation $Q'(u)=0$  in $\Gw$ admits a positive (weak) solution $v\in  W_{\loc}^{1,p}(\Omega)$, i.e.,  for all $ \varphi \in C_c^{\infty}(\Omega)$,
	  \[\int_{\Omega} |\nabla v|_A^{p-2} \nabla v \cdot \nabla \varphi \dx + \int_{\Omega} V |v|^{p-2} v \varphi \dx =   0 \,. \]
		\item The equation $Q'(u)=0$  in $\Gw$ admits a positive (weak) supersolution $v \in  W_{\loc}^{1,p}(\Omega)$, i.e.,   for all $ 0\leq\varphi \in C_c^{\infty}(\Omega)$,
	  \[\int_{\Omega} |\nabla v|_A^{p-2} \nabla v \cdot \nabla \varphi \dx + \int_{\Omega} V |v|^{p-2} v \varphi \dx  \geq 0  \,. \]
\end{enumerate}
\end{theorem}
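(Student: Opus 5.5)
The statement is the AAP theorem, quoted from \cite[Theorem~4.3]{PP}; here is how I would prove it. I would go around the cycle $(2)\Rightarrow(3)\Rightarrow(1)\Rightarrow(2)$.

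The implication $(2)\Rightarrow(3)$ is trivial, since every solution is a supersolution.

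For $(3)\Rightarrow(1)$ I would use a Picone-type identity. Let $v>0$ be a supersolution, which by the weak Harnack inequality is locally bounded away from zero. Take $\varphi\in C_c^\infty(\Omega)$ and test the supersolution inequality with the admissible function $|\varphi|^p/v^{p-1}\geq 0$. This gives
\[
\int_\Omega |\nabla v|_A^{p-2}\nabla v\cdot\nabla\Big(\frac{|\varphi|^p}{v^{p-1}}\Big)\dx+\int_\Omega V|\varphi|^p\dx\geq 0 .
\]
The pointwise Picone inequality (Allegretto--Huang, in its $A$-weighted form) reads
\[
|\nabla\varphi|_A^p\geq |\nabla v|_A^{p-2}A\nabla v\cdot\nabla\big(|\varphi|^p v^{1-p}\big).
\]
It follows from expanding the gradient and applying Young's inequality $p|a|^{p-1}|b|\leq (p-1)|a|^p+|b|^p$ together with Cauchy--Schwarz in the $A$-inner product. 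Combining the two displays yields $Q(\varphi)\geq 0$. The only technical points are checking that the test function lies in $W^{1,p}_c$, which uses the local positivity of $v$ and a truncation $v+\varepsilon$ if needed, and that the chain rule applies.

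For $(1)\Rightarrow(2)$ I would use an exhaustion together with compactness. Fix an exhaustion $\Omega_k\Subset\Omega_{k+1}$ of Lipschitz subdomains and a point $x_0\in\Omega_1$. Since $Q\geq 0$, for each $\delta>0$ the functional $Q+\delta\int_{\Omega_k}|\varphi|^p$ is coercive on $W^{1,p}_0(\Omega_k)$. This holds because for $V\in L^q_\loc$ with $q>n/p$ the potential term is form-small with respect to the gradient term, by the Morrey--Adams theorem. Coercivity lets me solve $Q'(u)+\delta|u|^{p-2}u=g$ in $\Omega_k$ with $0\lneqq g\in C_c^\infty(\Omega_k\setminus\overline{\Omega_{k-1}})$, $u=0$ on $\partial\Omega_k$. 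By the weak minimum principle, valid because the operator is strictly positive, the solution $u_k$ is positive in $\Omega_k$. Letting $\delta\to 0$ with suitable normalization, or alternatively taking the principal eigenfunction of $Q$ on $\Omega_k$ with eigenvalue $\lambda_k\geq 0$ (which decreases as $k$ grows), I obtain positive functions $u_k$ with
\[
Q'(u_k)=\lambda_k u_k^{p-1}\quad\text{in }\Omega_{k-1},\qquad u_k(x_0)=1 .
\]
Harnack's inequality for $p$-Schr\"odinger operators with $V\in L^q_\loc$ gives uniform local $L^\infty$ bounds. Local $C^{0,\alpha}$ and Caccioppoli estimates then give bounds in $W^{1,p}_\loc$. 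Passing to a subsequence, $u_k\to v$ weakly in $W^{1,p}_\loc$ and locally uniformly.

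I expect the main obstacle to be passing to the limit in the nonlinear term $|\nabla u_k|_A^{p-2}A\nabla u_k$, since weak convergence alone is not enough. I would handle it by proving a.e.\ convergence of gradients via the standard monotonicity (Minty/Browder) argument: test with $(u_k-v)\psi$ for a cutoff $\psi$ and use the strict monotonicity of $\xi\mapsto|\xi|_A^{p-2}A\xi$. Once this is done, $v$ is a weak solution of $Q'(v)=0$ with $v(x_0)=1$, and by Harnack $v>0$ everywhere. This gives (2).
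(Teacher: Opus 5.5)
Your cycle $(2)\Rightarrow(3)\Rightarrow(1)\Rightarrow(2)$ is the standard proof of the result in \cite{PP}, which the paper quotes without proof. It uses the Allegretto--Huang Picone inequality for $(3)\Rightarrow(1)$, and exhaustion, Harnack and a monotonicity argument for $(1)\Rightarrow(2)$. Your implications $(2)\Rightarrow(3)$ and $(3)\Rightarrow(1)$ are fine.

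The gap is in $(1)\Rightarrow(2)$, in the principal-eigenfunction alternative. Your concluding sentence treats both branches alike. If $u_k$ is the normalized principal eigenfunction on $\Omega_k$, then $\lambda_k=\lambda_1(\Omega_k)$ decreases to $\lambda_\infty=\inf_{0\neq\varphi\in C_c^\infty(\Omega)}Q(\varphi)/\int_\Omega|\varphi|^p\dx\geq 0$. Your compactness and monotonicity argument then gives $v>0$ with $Q'(v)=\lambda_\infty v^{p-1}$ in $\Omega$, not $Q'(v)=0$. Nothing forces $\lambda_\infty=0$. For $\Omega$ a ball and $V=0$, the limit is just the first Dirichlet eigenfunction of the ball, and $\lambda_\infty>0$. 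So this branch proves only assertion (3), and the cycle does not close. The sentence ``$v$ is a weak solution of $Q'(v)=0$'' silently assumes $\lambda_k\to 0$, which is false in general.

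The $\delta$-branch does work, but you should make the limit explicit. Take $\delta=\delta_k\downarrow 0$ and $0\lneqq g_k\in C_c^\infty(\Omega_k\setminus\overline{\Omega_{k-1}})$. Let $u_k$ be the positive solution of $Q'(u)+\delta_k|u|^{p-2}u=g_k$ in $\Omega_k$ with $u=0$ on $\partial\Omega_k$, normalized by $u_k(x_0)=1$. Then $Q'(u_k)=-\delta_k u_k^{p-1}$ in $\Omega_{k-1}$, so the coefficient is $-\delta_k\le 0$, not a nonnegative eigenvalue, and it tends to $0$. The Harnack and Caccioppoli constants are uniform in $k$ because $V+\delta_k$ is bounded in $L^q_{\loc}$. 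Your Minty-type argument then gives $Q'(v)=0$, with $v>0$ by Harnack.

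The standard argument (as in \cite{PP}) instead uses strict domain monotonicity, $\lambda_1(\Omega_k)>\lambda_1(\Omega_{k+1})\geq 0$, and the generalized maximum principle to solve $Q'(u)=g_k$ with $\delta=0$ directly. Then $u_k$ is already an exact solution in $\Omega_{k-1}$. Your $\delta$-regularization gets coercivity and the weak minimum principle for free from $Q\geq 0$ and Morrey--Adams, and avoids the strict-monotonicity lemma. The price is one extra limit, which is harmless.
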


Next, we recall the notion of a null-sequence.
 
\begin{Def}[Null-sequence, {\cite[Definition 1.4]{PT1}}]
		We say that a sequence $(\varphi_k)_{k\in\mathbb{N}}$ of nonnegative functions belonging to $W^{1,p}(\Omega)\cap C_{c}(\Gw)$ is a {\em null-sequence} for $Q$ in $\Gw$ if there exists an open set $B {\Subset} \Omega$ such that for all $k\geq 1$
		$$\int_B |\varphi_k|^p\!\dx  \asymp 1 \,,$$
     where the equivalence constants do not depend on $k$,
		and
				$$\lim_{k\to\infty} Q(\varphi_k)=\lim_{k\to\infty} \int_\Omega \!\big(|\nabla \varphi_k|_A^p+V|\varphi_k|^p\big)\dx=0.$$
\end{Def}
If there is a null-sequence, then it converges towards a unique Agmon ground state as we recall in the lemma below where we connect the notions of criticality,  ground state, Hardy inequalities and null-sequences.

\begin{lemma}[Characterization of criticality, \cite{PP,PT1}]\label{lem_crit}
Suppose that $Q\geq 0$ in $\Gw$. Then the following assertions are equivalent:
\begin{enumerate}[label=(\roman*)]
\item $Q$ is {\em critical} in $\Omega$.%, i.e., the only nonnegative function $W$ such that the inequality $$Q(\varphi)\geq \int_{\Omega} W|\varphi|^p\dx$$ holds for every $\varphi\in C_c^\infty(\Omega)$ is $W=0$.
\item The equation $Q'(u)=0$ in $\Omega$ admits a unique  positive supersolution (up to a multiplicative constant).
\item  $Q$ admits a null-sequence in $\Gw$.
 \end{enumerate}
Moreover, the supersolution $v$ of (ii) is a  strictly positive solution of $Q'(u)=0$ in $\Omega$, and every null-sequence of (iii)  converges locally uniformly and in $L^p_\loc(\Gw)$ to a positive multiple of $v$.
\end{lemma}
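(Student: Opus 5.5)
We prove the cycle (i)$\Rightarrow$(iii)$\Rightarrow$(ii)$\Rightarrow$(i), and extract the ``moreover'' part from the step (iii)$\Rightarrow$(ii) together with the AAP theorem (Theorem~\ref{theorem:AP}).

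\emph{(ii)$\Rightarrow$(i).} Suppose, to the contrary, that $Q$ is subcritical. Then there is a weight $0\lneqq W$ with $Q_{A,V-W}\geq 0$. By Theorem~\ref{theorem:AP} applied to $Q_{A,V-W}$, there is a positive solution $w$ of $Q'_{A,V-W}(u)=0$. Hence
\[
Q'_{A,V}(w)=Ww^{p-1}\gneqq 0,
\]
so $w$ is a positive supersolution of $Q'(u)=0$ which is not a solution. On the other hand, Theorem~\ref{theorem:AP} applied to $Q\ge 0$ gives a positive solution $u$ of $Q'(u)=0$. The functions $u$ and $w$ cannot be proportional, since $Q'$ is $(p-1)$-homogeneous and $u$ is a solution while $w$ is not. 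This contradicts the uniqueness in (ii).

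\emph{(i)$\Rightarrow$(iii).} Fix a ball $B\Subset\Omega$ and set
\[
\mu:=\inf\Big\{Q(\varphi)\;\Big|\;0\le\varphi\in W^{1,p}(\Omega)\cap C_c(\Omega),\ \int_B|\varphi|^p\dx=1\Big\}.
\]
Restricting to nonnegative $\varphi$ is harmless, because $Q(|\varphi|)=Q(\varphi)$ and $C_c^\infty$ functions can be approximated in this class. If $\mu>0$, then $Q(\varphi)\ge\mu\int_\Omega \mathbf 1_B|\varphi|^p\dx$ for all $\varphi$, so $W=\mu\mathbf 1_B$ is a Hardy weight and $Q$ is subcritical, a contradiction. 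Hence $\mu=0$, and any minimizing sequence is a null-sequence.

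\emph{(iii)$\Rightarrow$(ii) and the moreover part.} Let $(\varphi_k)$ be a null-sequence and let $u>0$ be any positive supersolution. By the Harnack inequality and local regularity, $u$ is locally bounded away from $0$ and $\infty$ and is continuous. We use the Picone-type (ground state) inequality
\[
Q(\varphi)\ \ge\ \int_\Omega L(\varphi,u)\dx, \qquad L(\varphi,u)=|\nabla\varphi|_A^p-\nabla\Big(\frac{\varphi^p}{u^{p-1}}\Big)\cdot|\nabla u|_A^{p-2}A\nabla u\ \ge 0,
\]
where $L(\varphi,u)=0$ a.e.\ exactly when $\varphi/u$ is constant. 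We write $\varphi_k=\psi_k u$. The standard two-sided estimates of $L$ give
\[
\int_\Omega L(\psi_k u,u)\dx\ \asymp\ \int_\Omega u^p|\nabla\psi_k|_A^2\big(\psi_k|\nabla u|_A+u|\nabla\psi_k|_A\big)^{p-2}\dx,
\]
which is a simplified energy. Combining this with $Q(\varphi_k)\to0$ and the normalization on $B$, we obtain that $\nabla\psi_k\to0$ in $L^p_{\loc}$ when $p\ge2$, and a corresponding weighted statement when $p<2$. A Poincaré inequality on an exhaustion of $\Omega$, anchored on $B$, then yields $\psi_k\to c$ in $W^{1,p}_{\loc}$ for a constant $c>0$ fixed by the normalization on $B$. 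Thus $\varphi_k\to cu$ in $L^p_{\loc}$, and locally uniformly after using the equicontinuity supplied by Harnack estimates. Since the limit is independent of which supersolution $u$ was chosen, any two positive supersolutions are proportional, which proves (ii). Taking $u$ to be the positive solution from Theorem~\ref{theorem:AP} shows that the unique supersolution is in fact a solution, strictly positive by Harnack, and that every null-sequence converges to a multiple of it.

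The main obstacle is this last step: proving the two-sided simplified-energy bound for $L$, which must be treated separately for $p\ge 2$ and $1<p<2$, and turning $\int L\to0$ into local convergence of $\psi_k$ to a constant. Since $\varphi_k$ is only locally bounded in $W^{1,p}$, the compactness must be arranged on every $K\Subset\Omega$ using the normalization on the fixed set $B$.
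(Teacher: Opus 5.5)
The paper does not prove this lemma; it is quoted from \cite{PP,PT1}. Your cycle of implications is the standard architecture there, and your steps (ii)$\Rightarrow$(i) and (i)$\Rightarrow$(iii) are correct.

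The gap is in (iii)$\Rightarrow$(ii), at the claim that any positive supersolution $u$ is locally bounded away from $0$ and $\infty$ and continuous. For a \emph{supersolution} only the weak Harnack inequality is available, which gives a local positive lower bound. Local boundedness from above and continuity fail in general: $u=|x|^{-\alpha}$ with $0<\alpha<(n-2)/2$ is a positive $W^{1,2}_{\loc}$ supersolution of $-\Delta u=0$ in $\mathbb{R}^n$, $n\geq 3$. Under (iii) every positive supersolution does turn out to be bounded, but only as a consequence of the uniqueness you are proving.

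You use the upper bound twice: to get $c>0$ from $\int_B\psi_k^pu^p\dx\asymp 1$, and to pass from $\psi_k\to c$ to $\varphi_k=\psi_k u\to cu$ in $L^p_{\loc}$. So the claim $\varphi_k\to c_u u$ for every supersolution $u$, on which your uniqueness conclusion rests, is not established.

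One repair is as follows. First obtain $\varphi_k\to cv$, $c>0$ (along a subsequence), only for the positive solution $v$ of Theorem~\ref{theorem:AP}; $v$ is locally bounded, H\"older continuous and locally bounded below. Then, for an arbitrary positive supersolution $u$, use only $\inf_K u>0$. By Picone, $\int_\Omega L(\varphi_k,u)\dx\le Q(\varphi_k)\to 0$. Since $L(\cdot,u)\geq 0$ is convex in the gradient variable, lower semicontinuity along a subsequence converging weakly in $W^{1,p}_{\loc}$ (null-sequences are locally bounded there) gives $L(cv,u)=0$ a.e. This means $\nabla(v/u)=0$, so $u$ is a multiple of $v$. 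Incidentally, the weight in your simplified energy should be $u^2$, not $u^p$, since the expression must be $p$-homogeneous in $u$.

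The locally uniform convergence cannot be obtained as you propose. The $\varphi_k$ are arbitrary test functions, not solutions, so no Harnack or equicontinuity estimate applies to them. In fact this part of the statement fails for general null-sequences when $p\leq n$, because points have zero $p$-capacity. Take $Q(\varphi)=\int_{\mathbb{R}^2}|\nabla\varphi|^2\dx$, which is critical with ground state $1$, and the standard logarithmic null-sequence $\varphi_k$, equal to $1$ on $B_k(0)$. Add logarithmic spikes $\eta_k\geq 0$ with $\eta_k(0)=1$, $\supp\eta_k\subset B_{1/k}(0)$ and $\int|\nabla\eta_k|^2\dx\to 0$. Then $\varphi_k+\eta_k$ is still a null-sequence and converges to $1$ in $L^2_{\loc}$, yet $(\varphi_k+\eta_k)(0)=2$ for all $k$.

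What your argument can actually deliver is convergence in $L^p_{\loc}(\Omega)$. For the full sequence this holds only after normalizing $\int_B\varphi_k^p\dx=1$; with merely $\asymp 1$ the limiting constant may oscillate along subsequences. Locally uniform convergence holds only for $p>n$ (by Morrey) or for specially constructed null-sequences.
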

%%%%%%%%%%%%%%
%We also recall the notion of a Green potential for  $Q'_{A,V}$ in $\Gw$.
%\eat{\begin{Def}[Green potential]\label{def_green_p} 	Let $0\lneqq f \in L^{p'}_\loc(\Gw)$. A minimal positive solution $G_f $ of the equation  $Q'_{A,V}(u)=f$ in $\Gw$ is called the {\em Green potential of $Q$ with charge $f$}. \end{Def}}

We give another characterization of subcriticality in terms of the existence of a Green potential with a positive charge.
\begin{lemma}\label{lem_Green_pot}
	Let $Q=Q_{A,V}\geq 0$ in $\Gw$. Then $Q$ is subcritical operator if and only if $Q'$ admits a Green potential with charge $f>0$. Moreover, such a Green potential with charge $f$ is unique.  	
\end{lemma}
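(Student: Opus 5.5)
We treat the two implications separately and then prove uniqueness, relying on the criticality theory recalled above: the AAP theorem (Theorem~\ref{theorem:AP}), the characterization of criticality (Lemma~\ref{lem_crit}), and the weak comparison principle for $Q'_{A,V}$ on relatively compact subdomains, which holds once $Q\geq0$.

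\emph{Subcritical $\Rightarrow$ existence.} We construct $G_f$ by exhaustion. Take an exhaustion $\Omega_k\Subset\Omega_{k+1}$ of $\Omega$ by smooth domains. Since $Q$ is subcritical in $\Omega$, it is strictly positive on each $\Omega_k$, meaning its principal Dirichlet eigenvalue there is positive. Hence the Dirichlet problem
\[
Q'(u_k)=f \text{ in }\Omega_k,\qquad u_k=0 \text{ on }\partial\Omega_k,
\]
has a unique solution $u_k\in W^{1,p}_0(\Omega_k)$. We obtain it by minimizing the coercive, weakly lower semicontinuous functional $\frac1p Q(u)-\int fu$; uniqueness and positivity come from the comparison principle (Díaz–Saá / Picone type) for positive data.

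By comparison on $\Omega_k$, the sequence $u_k$ is positive and increasing in $k$. The main obstacle is a uniform local bound. To get it, use subcriticality: pick a Hardy weight $W\gneqq0$ and let $\psi>0$ be a positive solution of $Q'_{A,V-W}(\psi)=0$, which exists by Theorem~\ref{theorem:AP}. I would argue by contradiction. Suppose $u_k(x_0)\to\infty$ at some point $x_0$. By the Harnack inequality, the normalized functions $u_k/u_k(x_0)$ converge locally uniformly to a positive solution $w$ of $Q'(w)=0$ in $\Omega$.

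The next step is to show that this $w$ is a ground state, i.e.\ of minimal growth at infinity. This holds because each $u_k$ vanishes on $\partial\Omega_k$ and the right-hand sides $f/u_k(x_0)^{p-1}$ tend to $0$. Having a ground state contradicts subcriticality by Lemma~\ref{lem_crit}. To make this rigorous, I would compare $u_k$ with multiples of a positive solution of minimal growth near infinity in the complement of a ball. An alternative is to use the Green function $G(\cdot,y)$, which exists in the subcritical case, via the $p$-potential estimates from \cite{PP,PT1}. Either way, the $u_k$ are locally bounded. Local Hölder and $W^{1,p}_{\loc}$ estimates then let us pass to the limit, so $G_f:=\lim u_k$ solves $Q'(G_f)=f$ and is positive.

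\emph{Minimality and uniqueness.} Let $v>0$ be any positive solution of $Q'(v)=f$. On each $\Omega_k$ we have $v\geq0=u_k$ on $\partial\Omega_k$, so comparison gives $v\geq u_k$, and letting $k\to\infty$ gives $v\geq G_f$. Hence $G_f$ is the minimal positive solution. It is unique, since any two minimal positive solutions dominate each other.

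\emph{Existence $\Rightarrow$ subcritical.} Suppose $Q'$ has a Green potential (indeed, any positive solution) $G_f$ with $f>0$. Then $G_f$ is a positive supersolution of $Q'(u)=0$ that is not a solution. By Lemma~\ref{lem_crit}, in the critical case every positive supersolution is a solution. So $Q$ cannot be critical, and since $Q\geq0$, it is subcritical. Equivalently, one can take $W:=f/G_f^{p-1}\gneqq0$: then $G_f$ solves $Q'_{A,V-W}(G_f)=0$, so $Q_{A,V-W}\geq0$ by Theorem~\ref{theorem:AP}, and $W$ is a Hardy weight.
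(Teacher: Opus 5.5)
Your converse direction (a Green potential is a positive supersolution that is not a solution, which is impossible in the critical case by Lemma~\ref{lem_crit}) and your minimality/uniqueness argument are correct and agree with the paper. The gap is in the existence direction, at the uniform local bound for the Dirichlet solutions $u_k$.

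\textbf{The local bound fails as argued.} Your contradiction argument uses nothing about $f$ beyond positivity. If it worked, every subcritical $Q$ would admit a Green potential for \emph{every} charge $f>0$, which is false. Take $-\Delta$ in $\R^3$, which is subcritical, with $f\equiv 1$. The Dirichlet solutions on the balls $B_k$ are $u_k(x)=(k^2-|x|^2)/6\to\infty$, and by comparison on balls no positive solution of $-\Delta u=1$ in $\R^3$ exists at all. Your normalized limit is $u_k/u_k(0)\to 1$. This is a positive harmonic function, but it is not of minimal growth at infinity: $|x|^{-1}$ agrees with it on $\partial B_1$ and is smaller outside. So the step ``$w$ is a ground state because $u_k=0$ on $\partial\Omega_k$ and $f/u_k(x_0)^{p-1}\to0$'' fails. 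The rescaled charges go to zero only locally, while the mass of $f$ near infinity still drives the solutions up. Comparison with minimal-growth solutions or Green-function estimates cannot repair this without an integrability condition on $f$. For $p\neq 2$ there is not even a superposition formula for the Green-function route.

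\textbf{The missing idea: choose the charge.} The lemma only requires a Green potential with \emph{some} charge $f>0$. You already introduced the right objects, $W$ and $\psi$, but never used them. Take a strictly positive continuous Hardy weight $W$, which exists for subcritical $Q$ by \cite[Theorem~4.15]{PP}; a mere $W\gneqq0$ would only give $f\gneqq0$. Let $\psi>0$ solve $Q'_{A,V-W}(\psi)=0$, and set $f:=W\psi^{p-1}>0$. Then $Q'_{A,V}(\psi)=f$, so $\psi$ is a positive barrier for the same equation. The weak comparison principle on $\Omega_k$ gives $u_k\le\psi$, and the increasing sequence $(u_k)$ converges to a Green potential $0<G_f\le\psi$. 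This is exactly the paper's argument.
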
	
\begin{proof}
	If $Q$ is critical, then the Agmon ground state satisfies the equation $Q'(u)=0$ in $\Gw$ and it is the unique (up to a multiplicative constant) positive supersolution of the above equation (see Lemma~\ref{lem_crit}). Since any Green potential is a positive supersolution {which is not a solution}, the critical operator $Q'$ does not admit a Green potential. 	
	
	On the other hand, if $Q$ is a subcritical, then it admits a strictly positive continuous Hardy weight $W$ \cite[Theorem~4.15 (iii)]{PP} with a positive solution $u$ satisfying the equation $Q'_{A,V-W}(u)=0$ in $\Gw$. Let $f:=Wu^{p-1}$.  For $k\in\N$, consider the unique positive solutions $u_k$ of the   Dirichlet problem 
	\begin{equation*}%\label{eq_seq_gr_pot}
Q'_{A,V}(v)=f|_{\Gw_k} \mbox{ in }\Gw_k, \quad v=0 \mbox{ on } \partial \Gw_k,
\end{equation*}
	where $(\Omega_k)$ is  a smooth, relatively compact exhaustion  of $\Omega$, i.e.,  $\Omega_k\Subset \Omega_{k+1}$ for all $k\geq1 $ and $\bigcup_{k=1}^\infty \Omega_k = \Omega$. Such a sequence converges monotonically to a Green potential $G_f$ with a charge $f$ 
if the above limit is finite  (see for example \cite[Theorem~3.10]{PP}).
 In our case, $u_k$ is bounded by $u$  due to the weak comparison principle (\cite[Theorem 5.3]{PP}). Therefore, the increasing sequence $(u_k)$ converges to a Green potential $G_f$ of $Q'$ satisfying $0<G_f\leq u$. The uniqueness statement follows from the minimality of  Green potential.
\end{proof}
%%%%%%%%%%%%%%%%%%%
A nonnegative functional $Q$ might contain an indefinite term (if the potential has indefinite sign), and in general $Q$ is not convex. Although, by Picone identity \cite{AH}, such functional $Q$ can be represented as the integral of a nonnegative Lagrangian $L$, this $L$ still contains an indefinite term. It was proved in \cite{PTT} (see also  \cite[Lemma 3.4]{PR} for its generalization) that $Q$ is equivalent to a {\em simplified energy} containing only nonnegative terms, as we explain now.
\begin{definition}[Simplified energy]\label{defse}
Let $v \in W^{1,p}_{\loc}(\Omega)$ be a positive solution of the equation $Q'(u)=0$ in $\Gw$. The {\em simplified energy} is defined for nonnegative functions $w\in W^{1,p}(\Omega) \cap C_c(\Omega)$ by
\begin{align*}%\label{eq_simp}
\!Q^v_{\mathrm{sim}}(w)\!:=\!\!\left\{\!\!
  		\begin{array}{ll}
  			\displaystyle{\int_\Omega \!\!\Big(v^2|\nabla w|_A^2\big(v|\nabla w|_A+w|\nabla v|_A\big)^{p-2}\Big)\!\dx} & \!\!\!\mbox{ if } 1<p\leq 2,\\[6mm]
			\displaystyle{\int_\Omega\! \Big(v^p|\nabla w|_A^p+v^2|\nabla v|_A^{p-2}w^{p-2}|\nabla w|_A^2\Big)\!\dx} & \!\!\!\mbox{ if } p>2.
		\end{array}
\right.
    \end{align*}
\end{definition}
We note that Picone identity holds also on manifolds (cf. \cite[Section~2]{PTT}). It follows that Lemma 2.2 in \cite{PTT} (\cite[Lemma 3.4]{PR}) is valid also on manifolds, and hence, we obtain the following equivalence between the functional $Q$ and the simplified energy $Q^v_{\mathrm{sim}}$.
\begin{lemma}[{\cite[Lemma 3.4]{PR}}]\label{simpl_energy}
Assume that $Q=Q_{A,V}\geq 0$ in $\Omega$. Let $v\in W^{1,p}_{\loc}(\Omega)$ be a fixed positive solution of the equation $Q'(u)=0$ in $\Omega$. Then, for all $0\leq w\in W^{1,p}(\Omega) \cap C_c(\Omega)$, we have
$$Q(vw) \asymp Q^v_{\mathrm{sim}}\left(w\right).$$
In particular, for $p=2$, the following well known identity holds true,
$$Q(vw)=\int_\Omega v^2\left|\nabla w\right|_A^2\dx.$$
\end{lemma}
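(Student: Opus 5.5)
The plan is to reduce the equivalence to a pointwise algebraic inequality, via the ground state representation (Picone identity) of $Q$ with respect to the positive solution $v$.

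\emph{Step 1: ground state representation.} Fix $0\le w\in W^{1,p}(\Omega)\cap C_c(\Omega)$ and set $u:=vw$. By the local regularity of positive solutions (Harnack), $v$ is locally bounded and locally bounded away from zero on $\supp w$. Hence $u\in W^{1,p}(\Omega)\cap C_c(\Omega)$ and $w^{p}v^{1-p}\cdot v^{p-1}=w^p$ is an admissible test function. More precisely, I would test the weak form of $Q'(v)=0$ from Theorem~\ref{theorem:AP} with $\psi:=u^p/v^{p-1}=w^pv$, approximating by $C_c^\infty$ functions in $W^{1,p}$ with fixed compact support. This gives
\[
\int_\Omega V u^p\dx=-\int_\Omega |\nabla v|_A^{p-2}A\nabla v\cdot\nabla\Big(\frac{u^p}{v^{p-1}}\Big)\dx .
\]
Inserting this into $Q(u)$ yields Picone's representation $Q(vw)=\int_\Omega L(u,v)\dx$, where
\[
L(u,v)=|\nabla u|_A^p-p\Big(\frac uv\Big)^{p-1}|\nabla v|_A^{p-2}A\nabla v\cdot\nabla u+(p-1)\Big(\frac uv\Big)^{p}|\nabla v|_A^p .
\]

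\emph{Step 2: rewrite $L$ as a convexity remainder.} Substituting $\nabla u=w\nabla v+v\nabla w$ and writing $a:=w\nabla v$, $b:=v\nabla w$, a direct computation gives
\[
L=|a+b|_A^p-|a|_A^p-p\,|a|_A^{p-2}Aa\cdot b ,
\]
which is the remainder of the first-order Taylor expansion of the convex function $\xi\mapsto|\xi|_A^p$. Since $|\xi|_A=|A^{1/2}\xi|$, the change of variables $\xi\mapsto A(x)^{1/2}\xi$ reduces everything pointwise to the Euclidean norm, with constants independent of $x$.

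\emph{Step 3: the algebraic lemma.} I would invoke, or prove by the standard integral formula $\int_0^1(1-t)\,D^2(|\cdot|^p)(a+tb)[b,b]\dt$, the Lindqvist-type estimate: for all $a,b\in\R^n$,
\[
|a+b|^p-|a|^p-p|a|^{p-2}a\cdot b\asymp |b|^2\big(|a|+|b|\big)^{p-2},
\]
with constants depending only on $p$. Applied with $|a|_A=w|\nabla v|_A$ and $|b|_A=v|\nabla w|_A$, this gives exactly $v^2|\nabla w|_A^2(v|\nabla w|_A+w|\nabla v|_A)^{p-2}$, which is the claim for $1<p\le2$. For $p>2$ one additionally uses $(s+t)^{p-2}\asymp s^{p-2}+t^{p-2}$, which turns the right-hand side into $v^p|\nabla w|_A^p+v^2|\nabla v|_A^{p-2}w^{p-2}|\nabla w|_A^2$. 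For $p=2$ the remainder is exactly $|b|_A^2$, giving the identity $Q(vw)=\int_\Omega v^2|\nabla w|_A^2\dx$.

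The main obstacle is Step 3, specifically the lower bound in the singular range $1<p<2$, where the Hessian of $|\xi|^p$ degenerates or blows up. There I would first try splitting into the cases $|b|\le|a|/2$ (where $|a+tb|\asymp|a|$ along the segment) and $|b|>|a|/2$ (a homogeneity and compactness argument). A secondary technical point is the justification of the test function in Step 1 on a manifold; since the computation is local and the Picone identity holds there (cf.~\cite{PTT}), I expect this to be routine.
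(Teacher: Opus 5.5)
Your proposal is correct and is essentially the standard argument behind the result that the paper cites rather than reproves (\cite[Lemma~3.4]{PR}, \cite[Lemma~2.2]{PTT}): Picone's identity turns $Q(vw)$ into the integral of the first-order Taylor remainder of $\xi\mapsto|\xi|_A^p$ at $a=w\nabla v$ in direction $b=v\nabla w$, and the two-sided inequality $|a+b|^p-|a|^p-p|a|^{p-2}a\cdot b\asymp|b|^2(|a|+|b|)^{p-2}$ (after the substitution $\xi\mapsto A^{1/2}\xi$) then gives the equivalence pointwise. One small correction to your assessment of Step~3: for $1<p<2$ the lower bound is actually the easy direction (since $|a+tb|\le|a|+|b|$ and $p-2<0$), and the estimates that genuinely need your case split are the upper bound for $1<p<2$ (integrability of $|a+tb|^{p-2}$ near a zero on the segment) and the lower bound for $p>2$.
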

Lemma \ref{simpl_energy} is a generalization of the \textit{ground state transform}, valid in the linear form (see \cite{DFP}), to the nonlinear case. In the nonlinear case, one obtains the equivalence (and not equality, as in the linear case) between $Q$ and a functional containing only positive terms. As an immediate corollary of Lemma~\ref{simpl_energy}, we state the following obvious {\em upper estimate} for the simplified energy, which will be useful later.

\begin{lemma}[Upper bounds for the simplified energy]\label{simple_est} Let $v\in W^{1,p}_{\loc}(\Omega)$ be a positive solution of the equation $Q'(u)=0$ in $\Omega$, and let $0\leq w\in W^{1,p}(\Omega) \cap C_c(\Omega)$.
Denote
$$X(w):=\int_{\Omega } v^p |\nabla w|_A^p\dx, \qquad Y(w):=\int_{\Omega } |w|^p |\nabla v|_A^p\dx.$$
Then there exists $C=C(p,n)>0$ such that for all $w\in W^{1,p}(\Omega) \cap C_c(\Omega)$ we have
\begin{equation*}%\label{est_qsim}
Q^v_{\mathrm{sim}}(w)\leq C\left\{
  		\begin{array}{ll}
  			X(w) & \mbox{ if } \;1<p\leq 2,\\\\
			X(w)+\left(\frac{X(w)}{Y(w)}\right)^{2/p}Y(w)  & \mbox{ if }\; p>2.
		\end{array}
\right.
\end{equation*}
%%%%%%%
\end{lemma}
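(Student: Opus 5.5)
The plan is to bound the integrand of $Q^v_{\mathrm{sim}}(w)$ from Definition~\ref{defse} pointwise in the case $1<p\leq 2$, and to use Hölder's inequality in the case $p>2$. Lemma~\ref{simpl_energy} is not needed for this estimate; it only explains why bounding $Q^v_{\mathrm{sim}}$ is useful. The constant will in fact be $C=1$.

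\emph{Case $1<p\leq 2$.} The plan is to compare the integrand $v^2|\nabla w|_A^2\big(v|\nabla w|_A+w|\nabla v|_A\big)^{p-2}$ with $v^p|\nabla w|_A^p$ pointwise.
\begin{itemize}
\item At points where $\nabla w=0$, the integrand vanishes, so there is nothing to prove (we use the convention $0\cdot\infty^{\,p-2}=0$ where both gradients vanish).
\item Elsewhere we have $v|\nabla w|_A+w|\nabla v|_A\geq v|\nabla w|_A>0$, because $v>0$ and $w\geq 0$.
\item Since $p-2\leq 0$, the map $t\mapsto t^{p-2}$ is nonincreasing on $(0,\infty)$. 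Hence
$$\big(v|\nabla w|_A+w|\nabla v|_A\big)^{p-2}\leq \big(v|\nabla w|_A\big)^{p-2}.$$
\item Multiplying by $v^2|\nabla w|_A^2$ shows that the integrand is at most $v^p|\nabla w|_A^p$.
\end{itemize}
Integrating this pointwise bound gives $Q^v_{\mathrm{sim}}(w)\leq X(w)$.

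\emph{Case $p>2$.} The first term of $Q^v_{\mathrm{sim}}(w)$ is exactly $X(w)$. For the second term, we rewrite its integrand as
$$v^2|\nabla v|_A^{p-2}w^{p-2}|\nabla w|_A^2=\big(v|\nabla w|_A\big)^2\big(w|\nabla v|_A\big)^{p-2}.$$
We then apply Hölder's inequality with the conjugate exponents $p/2$ and $p/(p-2)$. This bounds the second term by
$$\Big(\int_\Omega v^p|\nabla w|_A^p\dx\Big)^{2/p}\Big(\int_\Omega w^p|\nabla v|_A^p\dx\Big)^{(p-2)/p}=X(w)^{2/p}\,Y(w)^{1-2/p}=\Big(\frac{X(w)}{Y(w)}\Big)^{2/p}Y(w).$$
Adding the two terms yields the claimed bound.

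The only step needing care, which is minor rather than a real obstacle, is the degenerate case $Y(w)=0$. Then $w|\nabla v|_A=0$ a.e., so the second term vanishes. The right-hand side should be read as $X(w)^{2/p}Y(w)^{1-2/p}=0$, so the estimate holds trivially. Finiteness of $X(w)$ and $Y(w)$ follows from $w\in W^{1,p}(\Omega)\cap C_c(\Omega)$ together with $v\in W^{1,p}_{\loc}(\Omega)$ being locally bounded, which holds for positive solutions by standard regularity.
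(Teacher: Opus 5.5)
Your proof is correct and follows the paper's argument. For $1<p\leq 2$ you drop $w|\nabla v|_A$ using the nonpositive exponent, exactly as the paper does. For $p>2$ your H\"older step is equivalent to the paper's ``Young's inequality'': Young with weight parameter $s=(X/Y)^{2/p}$ yields the same bound $X(w)^{2/p}Y(w)^{1-2/p}=(X/Y)^{2/p}Y$, here even with $C=1$.
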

\begin{proof} 
	In the case $1<p\leq 2$, the exponent  $p-2$  in  $Q^v_{\mathrm{sim}}(w)$ is negative, and therefore we can drop the term $w|\nabla v|_A$ in the first integral in Definition~\ref{defse} to obtain the desired estimate. In the case $p>2$, we use Young's inequality to obtain
	 the required estimate from the definition of $Q^v_{\mathrm{sim}}$. 
\end{proof}

We conclude this section with the following useful application of the chain rule.
\begin{lemma}[{\cite[Lemma~2.10]{DP}}]\label{weak_lapl}
Let $u\in W_{\mathrm{loc}}^{1,p}(\Omega) \cap C(\Omega)$ for some $\alpha\in(0,1)$, and $F\in C^2$. Then the following formula holds in the weak sense:
\begin{equation*}\label{weak_eq}
Q'_{A,0}(F(u))=-|F'(u)|^{p-2}\left[(p-1)F''(u)|\nabla u|_A^p -  F'(u)Q'_{A,0}(u)\right].
\end{equation*}
\end{lemma}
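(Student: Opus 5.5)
Since $F\in C^2$ and $u\in W^{1,p}_{\loc}(\Omega)\cap C(\Omega)$, the chain rule in Sobolev spaces gives $F(u)\in W^{1,p}_{\loc}(\Omega)$ with $\nabla F(u)=F'(u)\nabla u$. Hence
$$|\nabla F(u)|_A^{p-2}A\nabla F(u)=|F'(u)|^{p-2}F'(u)\,|\nabla u|_A^{p-2}A\nabla u .$$
I will verify the identity in the weak sense by testing against $\varphi\in C_c^\infty(\Omega)$. Write $g(t):=|F'(t)|^{p-2}F'(t)$. Then
$$\int_\Omega |\nabla F(u)|_A^{p-2}A\nabla F(u)\cdot\nabla\varphi\dx=\int_\Omega |\nabla u|_A^{p-2}A\nabla u\cdot g(u)\nabla\varphi\dx .$$

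Next I rewrite $g(u)\nabla\varphi=\nabla(g(u)\varphi)-\varphi\, g'(u)\nabla u$. Where $F'\neq0$ we have $g'(t)=(p-1)|F'(t)|^{p-2}F''(t)$. Substituting, the first term becomes $\int |\nabla u|_A^{p-2}A\nabla u\cdot\nabla(g(u)\varphi)$. This is the weak pairing $\langle Q'_{A,0}(u), g(u)\varphi\rangle$, which I read as $\int g(u)\varphi\,Q'_{A,0}(u)$ whenever $Q'_{A,0}(u)$ is a function or measure. The second term gives $-(p-1)\int |F'(u)|^{p-2}F''(u)|\nabla u|_A^p\varphi$. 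Combining the two terms yields exactly
$$-|F'(u)|^{p-2}\bigl[(p-1)F''(u)|\nabla u|_A^p-F'(u)Q'_{A,0}(u)\bigr].$$

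The main obstacle is justifying $g(u)\varphi$ as a test function. This function is compactly supported and continuous. It lies in $W^{1,p}$ when $g$ is locally Lipschitz, which holds for $p\geq2$. For $1<p<2$, $g$ is only Hölder continuous near zeros of $F'$. I would handle this as follows:
\begin{enumerate}
\item Regularize $g$ by $g_\delta(t)=(F'(t)^2+\delta)^{(p-2)/2}F'(t)$.
\item Run the same computation with $g_\delta$.
\item Pass to the limit $\delta\to0$ by dominated convergence, using that $u$ is locally bounded and that $|\nabla u|^p\in L^1_{\loc}$.
\end{enumerate}
In the paper's application $F'\neq0$ on the range of $u$ (for instance $F(t)=t^{(p-1)/p}$ on $u=G_f>0$), and the issue disappears. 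Finally, density extends the weak pairing from smooth test functions to $W^{1,p}\cap C_c$ test functions, since $|\nabla u|^{p-2}\nabla u\in L^{p'}_{\loc}$.
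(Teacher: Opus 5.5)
Your core computation (chain rule for $F(u)$, then testing the weak form of $Q'_{A,0}(u)$ with $|F'(u)|^{p-2}F'(u)\varphi$) is the same ``chain rule plus definition of the weak $p$-Laplacian'' argument the paper cites. It is complete when $p\ge 2$, or when $F'\neq 0$ on $u(\supp\varphi)$. The latter covers every use in the paper, namely $F(t)=t^\gamma$ with $u=G_f>0$.

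The gap is step 3 of your regularization for $1<p<2$. A direct computation gives
\[
g_\delta'(t)=\bigl(F'(t)^2+\delta\bigr)^{(p-4)/2}F''(t)\bigl[(p-1)F'(t)^2+\delta\bigr].
\]
So at a zero $t_0$ of $F'$ with $F''(t_0)\neq 0$ you get $g_\delta'(t_0)=\delta^{(p-2)/2}F''(t_0)\to\infty$. The best bound uniform in $\delta$ is $|g_\delta'|\le |F'|^{p-2}|F''|$. Hence local boundedness of $u$ and $|\nabla u|^p\in L^1_{\loc}$ do not give an integrable majorant for $g_\delta'(u)|\nabla u|_A^p$, and dominated convergence is not justified.

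A cleverer limiting argument cannot repair this, because in that regime the formula itself breaks down. Take $A=I$, $u(x)=x_1$ (so $Q'_{A,0}(u)=0$), $F'(s)=s^3\sin(1/s)$ (so $F\in C^2$), and $1<p\le 4/3$. Consider each hump of $F'$ between the consecutive zeros $1/((k+1)\pi)$ and $1/(k\pi)$. Since $(|F'|^{p-2}F')'=(p-1)|F'|^{p-2}F''$ where $F'\neq0$, the integral of $|F'|^{p-2}|F''|$ over that hump is at least $\frac{1}{p-1}\max|F'|^{p-1}\ge c\,k^{-3(p-1)}$. These lower bounds sum to $\infty$. Therefore $|F'(u)|^{p-2}F''(u)|\nabla u|_A^p\notin L^1_{\loc}$ near $\{x_1=0\}$, and the right-hand side has no meaning in the weak sense.

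The right fix is to add the hypothesis $F'\neq0$ on $u(\Omega)$ (or $p\ge 2$) and drop the regularization. Under that hypothesis your argument is a complete proof.
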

\begin{proof}
	The proof is a direct consequence of the chain rule and the definition of the weak $p$-Laplacian. For the details, we refer to \cite[Lemma 2.10]{DDP}.
\end{proof}
As a direct consequence of Lemma~\ref{weak_lapl}, we get an explicit formula for the Hardy weight obtained by the supersolution construction using powers of Green's potentials. For $1<p<\infty $ and $0\leq\vge\leq 1$,
let
\[\gamma=\gamma_\vge=\frac{p-1+\vge}{p} \,, \ \mbox{ and } \  c_\vge = \gamma^{1-p}\]

\begin{corollary}\label{cor_1}
	Let  $1<p<\infty $ and $0\leq\vge\leq 1$. Assume that $Q_{A,c_\vge V}$ is subcritical in $\Gw$. For $0\lneq f\in L^1(\Gw)\cap L^{p'}_\loc(\Gw)$, let $G_{f}$  be a Green potential satisfying the equation $Q'_{A,c_\vge V}[G_{f}]=f$ in $\Gw$. 
	Then,
	\begin{equation*}\label{Wvge}
	W_{1+\vge} :=\frac{Q'_{A,V}[G_{f}^\gamma]}{G_{f}^{\gamma(p-1)}}= \gamma^{p-1}
	\left( (\gamma-\vge)\frac{|\nabla G_{f}|_A^p}{G_{f}^p}
	  +\frac{f}{G_{f}^{p-1}} \! \right)
	\end{equation*}
	is positive and non trivial, 
	and $G_f^{\gamma}$ is a positive solution of the equation
	$Q'_{A,V-W_{1+\vge}}[u] =0$.
In particular, $W_{1+\vge}$ is a Hardy weight for $Q_{A,V}$. Moreover, if $f>0$, then  $W_{1+\vge}>0$ in $\Gw$.
\end{corollary}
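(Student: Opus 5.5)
We prove Corollary~\ref{cor_1} by applying Lemma~\ref{weak_lapl} with $u=G_f$ and $F(t)=t^\gamma$, and then reading off the remaining assertions from the resulting formula.

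\textbf{Step 1: the weak formula.} We have $F'(t)=\gamma t^{\gamma-1}$ and $F''(t)=\gamma(\gamma-1)t^{\gamma-2}$. Since $G_f>0$ is continuous, $F$ is smooth near the range of $G_f$ on every compact subset, so the lemma applies locally. This gives
\[
Q'_{A,0}(G_f^\gamma)=-\gamma^{p-1}G_f^{(\gamma-1)(p-1)}\Big[(p-1)(\gamma-1)G_f^{\gamma-2}|\nabla G_f|_A^p-G_f^{\gamma-1}Q'_{A,0}(G_f)\Big].
\]

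\textbf{Step 2: insert the equation for $G_f$.} From $Q'_{A,c_\vge V}(G_f)=f$ we get $Q'_{A,0}(G_f)=f-c_\vge V G_f^{p-1}$. Multiplying by $\gamma^{p-1}G_f^{(\gamma-1)(p-1)+\gamma-1}$ and using $c_\vge\gamma^{p-1}=1$, the potential term becomes $-VG_f^{\gamma(p-1)}$. It therefore cancels exactly when we add $V(G_f^\gamma)^{p-1}$.

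We also have $(p-1)(1-\gamma)=(p-1)(1-\vge)/p$, while $\gamma-\vge=(p-1)(1-\vge)/p$; so the gradient coefficient is $\gamma-\vge$. The powers of $G_f$ are $(\gamma-1)(p-1)+\gamma-2=\gamma(p-1)-p$ and $(\gamma-1)(p-1)+\gamma-1=\gamma(p-1)-(p-1)$. Dividing by $G_f^{\gamma(p-1)}$ then yields the claimed expression for $W_{1+\vge}$. By construction this is the identity $Q'_{A,V-W_{1+\vge}}(G_f^\gamma)=0$ in the weak sense, with $G_f^\gamma\in W^{1,p}_{\loc}$ because $G_f$ is locally bounded away from $0$.

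\textbf{Step 3: positivity and nontriviality.} Since $\gamma-\vge\ge 0$ and $f\ge0$, we have $W_{1+\vge}\ge0$. It is nontrivial because $f\gneqq0$ and $G_f<\infty$. If $f>0$, the second term is strictly positive, so $W_{1+\vge}>0$. The local integrability of $W_{1+\vge}$ needed for it to qualify as a weight follows from $|\nabla G_f|\in L^p_{\loc}$, $f\in L^{p'}_{\loc}$ and $G_f$ locally bounded below; this is the only point where some care is needed. Finally, the AAP theorem (Theorem~\ref{theorem:AP}) applied to the positive solution $G_f^\gamma$ gives $Q_{A,V-W_{1+\vge}}\ge0$, so $W_{1+\vge}$ is a Hardy weight for $Q_{A,V}$.
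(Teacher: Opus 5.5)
Your approach is the paper's: the corollary is stated there as a direct consequence of Lemma~\ref{weak_lapl} with $F(t)=t^\gamma$. Your Steps 2--3 are exactly what is needed: the $V$-term cancels via $c_\vge\gamma^{p-1}=1$, the gradient coefficient comes from $(p-1)(1-\gamma)=\gamma-\vge$, and positivity follows from $\gamma-\vge\ge 0$ and $f\gneqq 0$.

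There is, however, an exponent slip in Step 1. The prefactor comes from $|F'(G_f)|^{p-2}=\gamma^{p-2}G_f^{(\gamma-1)(p-2)}$, so it should read $\gamma^{p-1}G_f^{(\gamma-1)(p-2)}$, not $\gamma^{p-1}G_f^{(\gamma-1)(p-1)}$. As written, the identities you state in Step 2 are false unless $\gamma=1$; for instance, $(\gamma-1)(p-1)+\gamma-2=\gamma p-p-1\neq\gamma(p-1)-p$. Worse, the potential term would come out as $VG_f^{\gamma p-1}$, which does not cancel against $VG_f^{\gamma(p-1)}$.

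With the correct prefactor, the multiplier in Step 2 is $\gamma^{p-1}G_f^{(\gamma-1)(p-1)}$. The two powers are then indeed $\gamma(p-1)-p$ and $\gamma(p-1)-(p-1)$, and the claimed formula follows.

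A minor point concerns integrability. Your argument gives only $W_{1+\vge}\in L^1_{\loc}$, since the term $f/G_f^{p-1}$ is no better than $f$. The paper's definition of Hardy weight asks for $L^q_{\loc}$ with $q>n/p$, which you have not shown. $L^1_{\loc}$ does suffice for the Picone direction of the AAP theorem that you use, and the paper does not address this point either.
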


%%%%%%%%%%%%%%%%%%%%%% 

\mysection{Coarea Estimates}\label{sec_coarea}
We denote the strict superlevel sets of the Green potential $G_f$ with a charge $f>0$ and $t>0$ by
$$\Gw_t:=\{x\in \Gw\mid G_{f}(x)>t \}.$$
Furthermore, for a function $V\colon \Gw\to \R$, we denote by $V^\pm$ the positive and negative parts of $V$, respectively, i.e., $V^\pm:=\max\{\pm V,0\}$.

\begin{lemma}\label{coarea grad G} Let $1<p<\infty$ and $\alpha\in(0,1)$.
	Assume that  $Q_{A,V}$ is subcritical in $\Gw$,  and let
	$G_{f}\in C^{1,\alpha}_{\loc}(\Gw)$ be a Green potential with respect to $Q'_{A,V}$ and a charge 
	$0\lneq f\in L^1(\Gw)\cap L^{p'}_\loc(\Gw)$, and assume that 
	\begin{equation*}%\label{eq:assum_G_G_V}
	\lim\limits _{x\to \bar{\infty}}G_{f}=0,	\qquad  \quad 		
	\quad \quad		\int_{\Gw}V^-G_{f}^{p-1} \dx<\infty.
	\end{equation*}	
	 There exists $C>0$ such that for all $t$
	\begin{equation*}%\label{eq:coarea_modified}
	\int_{\{G_{f}=t\}}  \frac{|\nabla G_{f}|_A^p}{|\nabla G_{f}|}\dH \leq C,
	\end{equation*}
	Moreover, if in addition, $\int_{\Gw}VG_{f}^{p-1} \dx \leq  0$, then there exists $0<M_{f}<\sup_{\Gw}G_{f}$ such that 
	for a.e.   $0<t<M_{f}$
	$$\int_{\{G_{f}=t\}} \frac{|\nabla G_{f}|_A^p}{|\nabla G_{f}|}\dH \asymp 1,$$
	where the constants do  not depend on $t$.
\end{lemma}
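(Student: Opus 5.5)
The quantity $\int_{\{G_f=t\}}|\nabla G_f|_A^{p}/|\nabla G_f|\dH$ is the flux of the vector field $|\nabla G_f|_A^{p-2}A\nabla G_f$ through the level set $\{G_f=t\}$. The plan is to compute it with the Gauss--Green theorem applied to the equation $Q'_{A,V}(G_f)=f$ on the superlevel set $\Gw_t$.

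Since $G_f\to 0$ at $\bar\infty$, for every $t>0$ the set $\overline{\Gw_t}$ is compact in $\Gw$. On $\partial\Gw_t=\{G_f=t\}$ (for a.e. $t$, by Sard/coarea, $\nabla G_f\neq 0$ $\mathcal{H}^{n-1}$-a.e.) the outward normal is $-\nabla G_f/|\nabla G_f|$. Integrating the equation over $\Gw_t$ formally gives
$$\int_{\{G_f=t\}}\frac{|\nabla G_f|_A^p}{|\nabla G_f|}\dH=\int_{\Gw_t}\big(f-VG_f^{p-1}\big)\dx=:\Phi(t).$$
To make this rigorous for $C^{1,\alpha}$ functions, I would test the weak equation with $\psi_\delta(G_f)$, where $\psi_\delta$ is a piecewise linear cutoff equal to $0$ below $t$, equal to $1$ above $t+\delta$, and linear in between. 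This gives
$$\frac1\delta\int_{\{t<G_f<t+\delta\}}|\nabla G_f|_A^p\dx=\int_\Gw\big(f-VG_f^{p-1}\big)\psi_\delta(G_f)\dx .$$
By the coarea formula the left side is $\frac1\delta\int_t^{t+\delta}(\text{flux at }s)\,ds$, so the identity holds for a.e. $t$ by Lebesgue differentiation. (Alternatively, one can work with the averaged flux and avoid pointwise issues.)

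For the upper bound, I would estimate $\Phi(t)\le \int_\Gw f\dx+\int_\Gw V^-G_f^{p-1}\dx=:C<\infty$, using $f\in L^1$ and the hypothesis on $V^-$. This bound is uniform in $t$.

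For the two-sided bound under $\int_\Gw VG_f^{p-1}\dx\le 0$, I would observe that $\Phi$ is nonincreasing in $t$ only if the integrand is nonnegative, which need not hold. Instead I would use continuity: by dominated convergence (the integrand is in $L^1(\Gw)$ once $V^+G_f^{p-1}$ is integrable, see below), $\Phi(t)\to\int_\Gw f\dx-\int_\Gw VG_f^{p-1}\dx\ge\int_\Gw f\dx>0$ as $t\downarrow 0$, because $\Gw_t\uparrow\{G_f>0\}=\Gw$. Hence there is $M_f$ with $\Phi(t)\ge \tfrac12\int_\Gw f\dx$ for $0<t<M_f$, and $M_f<\sup G_f$ can be arranged by shrinking.

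The main obstacle is integrability of $V^+G_f^{p-1}$, which the dominated-convergence step needs. I would get it from the identity itself: $\int_{\Gw_t}V^+G_f^{p-1}\dx\le\int_\Gw f\dx+\int_\Gw V^-G_f^{p-1}\dx$, since the flux is nonnegative. Letting $t\to0$ and applying monotone convergence gives $V^+G_f^{p-1}\in L^1(\Gw)$, which makes the limit argument legitimate. A secondary technical point is justifying that the boundary term vanishes at $\bar\infty$; this is automatic here because $\Gw_t\Subset\Gw$ and the cutoff $\psi_\delta(G_f)$ is compactly supported.
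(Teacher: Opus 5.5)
Your proposal is correct. It rests on the same identity as the paper: the flux of $|\nabla G_f|_A^{p-2}A\nabla G_f$ through $\{G_f=t\}$ equals $\int_{\Gw_t}(f-VG_f^{p-1})\dx$. You justify that identity by a different, more elementary route.

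\textbf{The flux identity.} The paper applies the Gauss--Green theorem for divergence-measure fields on sets of finite perimeter. It treats $-\diver(|\nabla G_f|_A^{p-2}A\nabla G_f)$ as a measure and identifies the outer normal on $\partial\Gw_t$ with $-\nabla G_f/|\nabla G_f|$. You instead test the weak equation with the Lipschitz truncation $\psi_\delta(G_f)$, which is admissible because $\{G_f\ge t\}$ is compact, and recover the flux from the coarea formula and Lebesgue differentiation. Your version needs only the weak formulation and the coarea formula, and it works with $\{G_f=t\}$ directly rather than with $\partial\Gw_t$. It also makes plain that the identity holds only for a.e.\ $t$. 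That is all the paper's proof yields too, and all that Proposition~\ref{pro_key} uses, despite the ``for all $t$'' in the statement. Indeed, with $A$ only in $L^\infty_{\loc}$, the level-set integrand is itself determined only for a.e.\ $t$.

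\textbf{The lower bound.} Your argument here is more careful than the paper's. The paper asserts $\int_{\Gw_t}f\dx\le\int_{\Gw_t}(f-VG_f^{p-1})\dx$, which amounts to $\int_{\Gw_t}VG_f^{p-1}\dx\le 0$. That does not follow from $\int_\Gw VG_f^{p-1}\dx\le 0$; for instance, it fails for every small $t$ if equality holds and $V<0$ outside a compact set. Your limit $\Phi(t)\to\int_\Gw f\dx-\int_\Gw VG_f^{p-1}\dx$ as $t\downarrow 0$ is the right argument. It mirrors what the paper does in the discrete Lemma~\ref{coareagradG}.

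\textbf{Integrability of $V^+G_f^{p-1}$.} This is not a genuine obstacle. Given $\int_\Gw V^-G_f^{p-1}\dx<\infty$, the hypothesis $\int_\Gw VG_f^{p-1}\dx\le 0$ already means $\int_\Gw V^+G_f^{p-1}\dx\le\int_\Gw V^-G_f^{p-1}\dx$. Your derivation from the nonnegativity of the flux is still valid and shows slightly more: integrability holds even without the sign condition.
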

\begin{proof}
Our proof is a careful inspection of  the proof of 
\cite[Proposition 3.1]{DP}. The assumption $G_{f}\in C^{1,\alpha}_{\loc}(\Gw)$ and $$\lim _{x\to \bar{\infty}}G_{f}=0,$$ implies that for a.e. $t>0$   the set 
	$\Gw_t$ has finite perimeter \cite[Theorem~5.9~(i)]{evansf}. 
	Furthermore, by assumption
	$V^-G_{f}^{p-1}\in L^1(\Gw)$. Finally, Sard's theorem for $C^{1,\alpha}$-functions implies that the conditions in Gauss-Green theorem (\cite[Theorems 5.2 and 7.2]{Chen}) are satisfied in $\Gw_t$, and hence,  
\begin{align*}
	-   \int_{\partial \Gw_t}      |\nabla G_{f}|_A^{p-2}   A\nabla G_{f} \cdot  \vec{n} \dH  & = -   \int_{\Gw_t} 
	\diver   (|\nabla G_{f}|_A^{p-2}   A\nabla G_{f}) \dx 
\\ & := -\diver   (|\nabla G_{f}|_A^{p-2}   A\nabla G_{f}) (\Omega_t) \,,
	\end{align*}
    where $\vec{n}$ is the outward unit normal on $\partial \Gw_t$ and $-\diver   (|\nabla G_{f}|_A^{p-2}   A\nabla G_{f})$ acts as a measure. Since $G_f$ is  a solution of  $Q'_{A,V}(G_f)=f$ in distribution sense, it follows that
	\begin{align*}
	 -\diver   (|\nabla G_{f}|_A^{p-2}   A\nabla G_{f}) (\Omega_t)   & = \int_{\Gw_t}(f   -   VG_{f}^{p-1} ) \dx \\& \leq\int_{\Gw_t}(f   +   V^-G_{f}^{p-1} ) \dx \leq C
	\end{align*}
	%The assumption $f+|V| G_{f}^{p-1}\in L^1(\Gw)$ implies 
	%\begin{equation}\label{eq:M} 		-   \int_{\partial \Gw_t}      |\nabla G_{f}|_A^{p-2}   A\nabla G_{f} \cdot  \vec{n} \dH=\int_{\Gw_t}\big (f-VG_{f}^{p-1} \big )\dx \leq  C,	\end{equation}
for some $C>0$ and  for a.e. $t>0$.
	Moreover, if in addition, $\int_{\Gw}VG_{f}^{p-1} \dx \leq  0$, then there exists $0<M_{f}<\sup\limits_{\Gw}G_{f}$ such that 
	for almost every  $0<t<M_{f}$ 
	$$
	C^{-1} \leq \int_{\Gw_t} f \dx\leq\int_{\Gw_t}\big (f-VG_{f}^{p-1} \big )\dx.
	$$
	Consequently,
	$$
		-   \int_{\partial \Gw_t}      |\nabla G_{f}|_A^{p-2}   A\nabla G_{f} \cdot  \vec{n} \dH=\int_{\Gw_t}\big (f-VG_{f}^{p-1}\big ) \dx \asymp 1,
	$$
	and $C$ does not depend on $t$.
	Sard's theorem for $C^{1,\alpha}$ functions implies that for almost all~$0<t<\sup_{\Omega}G_{f}$,
	$$\mathcal{H}^{n-1}\left(\{x\in\Omega~|~G_{f}(x)=t~\mbox{and}~\nabla G_{f}(x)=0\}\right)=0.$$
	 Furthermore,  the definition of $\Gw_t$ implies that $G_{f}\geq t$ in 
	$ \overline{\Gw_t}$, and hence, $\vec{n}=-\frac{\nabla G_{f}}{|\nabla G_{f}|}$ for $\mathcal{H}^{n-1}$-a.e. $x\in \partial \Gw_t$. Therefore,
	$$
	-\int_{\partial \Gw_t}|\nabla G_{f}|_A^{p-2}A\nabla G_{f}\cdot \vec{n}\dH =
	\int_{\partial \Gw_t}|\nabla G_{f}|_A^{p-1}\frac{|\nabla G_{f}|_A}{|\nabla G_{f}|}\dH \asymp 1.  \qedhere
	$$ 
\end{proof}

We apply the estimates given by Lemma~\ref{coarea grad G} to obtain the following coarea estimates associated with $Q'_{A,V}$. 

\begin{proposition}[Coarea estimates]\label{pro_key}
Assume the setting of Lemma~\ref{coarea grad G}.
Let $0\leq \eta \leq 1$, $\gamma=\gamma_\eta=(p-1+\eta)/p$ and set $u:=G_f ^{\gamma}$. Then for every   $g \in  C((0,\sup_{\Omega} u))$   the following estimate  hold
\begin{equation*}%\label{IPP}
\int_{\Omega} g(u) |\nabla u|_A^p \dx \leq C  \int_{0}^{\sup u} {g(t)}t^{\eta/\gamma-1}\,\mathrm{d}t,
\end{equation*}
%and
%&\begin{equation*}%\label{IPP2}
%\int_{\Omega} h(G_f ) |\nabla G_f |_A^p \dx \leq C   \int_{\inf G_f }^{\sup G_f } h(t)\dt.
%\end{equation*}
Moreover, if in addition, $\int_{\Gw}VG_{f}^{p-1} \dx \leq  0$, then, for all $g \in L^{\infty}_{\loc}((0,\sup_{\Omega} u))$,
\begin{equation*}%\label{IPP}
\int_{\Omega} g(u) |\nabla u|_A^p \dx \asymp  \int_{0}^{\sup u} g(t)\,t^{\eta/\gamma-1}\,\mathrm{d}t .
\end{equation*}
%and
%\begin{equation*}%\label{IPP2}\int_{\Omega} h(G_f ) |\nabla G_f |_A^p \dx \asymp   \int_{\inf G_f }^{\sup G_f } h(t)\dt.\end{equation*}
\end{proposition}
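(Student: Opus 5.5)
The plan is to reduce the estimate to Lemma~\ref{coarea grad G} via the coarea formula applied to $G_f$, and then change variables from $G_f$ to $u=G_f^\gamma$.

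\textbf{Rewriting the integrand.} Since $u=G_f^\gamma$, the chain rule gives $\nabla u=\gamma G_f^{\gamma-1}\nabla G_f$. Hence
\[
g(u)|\nabla u|_A^p=\gamma^p\, g(G_f^\gamma)\, G_f^{p(\gamma-1)}\,|\nabla G_f|_A^p .
\]
We may assume $g\ge 0$ for the upper bound; otherwise split $g=g^+-g^-$, and for the two-sided statement take $g\ge0$ as well, since $\asymp$ is meaningful only there.

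\textbf{Applying the coarea formula.} Apply the coarea formula for the locally Lipschitz function $G_f\in C^{1,\alpha}_{\loc}$. On the critical set $\{\nabla G_f=0\}$ the integrand vanishes, so nothing is lost there, and Sard's theorem as in Lemma~\ref{coarea grad G} shows the remaining level sets are regular for a.e.\ $t$. We obtain
\[
\int_\Omega g(u)|\nabla u|_A^p\dx=\gamma^p\int_0^{\sup G_f} g(s^\gamma)\,s^{p(\gamma-1)}\int_{\{G_f=s\}}\frac{|\nabla G_f|_A^p}{|\nabla G_f|}\dH\,\mathrm{d}s .
\]

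\textbf{Upper bound.} By the first part of Lemma~\ref{coarea grad G}, the inner integral is at most $C$ for a.e.\ $s$. Substitute $t=s^\gamma$, so that $s=t^{1/\gamma}$ and $\mathrm{d}s=\gamma^{-1}t^{1/\gamma-1}\mathrm{d}t$. The power of $t$ becomes
\[
\frac{p(\gamma-1)}{\gamma}+\frac1\gamma-1=\frac{p\gamma-(p-1)}{\gamma}-1=\frac{\eta}{\gamma}-1,
\]
using $p\gamma=p-1+\eta$. This yields the stated upper bound with $C$ independent of $g$. Continuity of $g$ is only needed for measurability and for the formula to make sense.

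\textbf{Lower bound.} Under $\int VG_f^{p-1}\le0$, Lemma~\ref{coarea grad G} gives the two-sided bound $\asymp1$ on the inner integral for a.e.\ $s<M_f$, and the same computation gives the lower bound on $(0,M_f^\gamma)$.

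\textbf{Main obstacle: levels $s\in[M_f,\sup G_f)$.} Here the lemma as stated only gives the upper bound. I would handle this by revisiting its proof: the flux identity gives
\[
\int_{\{G_f=s\}}\frac{|\nabla G_f|_A^p}{|\nabla G_f|}\dH=\int_{\Omega_s}(f-VG_f^{p-1})\dx .
\]
For every $s<\sup G_f$ the set $\Omega_s$ is nonempty and open, and $f>0$, so $\int_{\Omega_s}f>0$. However, this need not be bounded below uniformly as $s\to\sup G_f$, and the $V$ term may have either sign.

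I would therefore interpret $\asymp$ with $g\in L^\infty_{\loc}$ as follows. The contribution of $[M_f^\gamma,\sup u)$ is controlled on both sides by the main part, or else one restricts to the range where the lemma applies. Concretely, I expect the intended argument to use $M_f$ close to $\sup G_f$, or to note that the lower estimate in Lemma~\ref{coarea grad G} in fact extends to all $t$ in compact subsets of $(0,\sup G_f)$, by monotonicity in $t$ of $\int_{\Omega_t}f$ together with continuity of the flux.

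This extension of the lower bound beyond $M_f$ is the step I consider most delicate. I would first try to verify it via the flux identity combined with the local boundedness of $g$, which makes the endpoint region harmless.
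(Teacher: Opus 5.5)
Your upper bound is the paper's argument: the chain rule $|\nabla u|_A^p=\gamma^pG_f^{\eta-1}|\nabla G_f|_A^p$, the coarea formula for $G_f$, the flux bound of Lemma~\ref{coarea grad G}, and the substitution $\tau=t^\gamma$. Your exponent bookkeeping is correct. The lower bound you get for levels below $M_f$ is also exactly what that argument yields. One small correction: $g\geq 0$ must be an assumption, not a reduction. For signed $g$, splitting $g=g^+-g^-$ would need a \emph{lower} bound on the $g^-$ part, so the upper estimate does not follow. The paper tacitly uses $g\geq 0$ throughout.

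The issue you flag for levels in $[M_f,\sup G_f)$ is a genuine gap, but it lies in the statement itself, and the paper's proof does not close it. It only asserts that ``the inequality can be replaced by the equivalence sign'', while Lemma~\ref{coarea grad G} gives a lower flux bound only for $t<M_f$. None of your remedies can work, because the two-sided estimate on all of $(0,\sup u)$ is false.

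Take $\Omega=\R^n$, $n\geq 3$, $p=2$, $A=I$, $V=0$, $f>0$ smooth, radial and rapidly decaying, and $\eta=1$. Then $u=G_f$ and the weight $t^{\eta/\gamma-1}$ is $1$. $G_f$ is radially decreasing with maximum at the origin, and the flux $\Phi(t)=\int_{\{G_f>t\}}f\dx$ decreases to $0$, like $(\sup G_f-t)^{n/2}$, as $t\to\sup G_f$.
\begin{itemize}
\item For $g=1_{(\sup u-\delta,\sup u)}$, the left-hand side is at most $\delta\,\Phi(\sup u-\delta)=o(\delta)$, while the right-hand side equals $\delta$.
\item For $g(t)=(\sup u-t)^{-1}\in L^\infty_{\loc}((0,\sup u))$, the right side is infinite and the left side finite.
\end{itemize}

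This rules out each of your suggestions:
\begin{itemize}
\item The contribution of $[M_f^\gamma,\sup u)$ is not controlled by the ``main part'', which may vanish.
\item No choice of $M_f<\sup G_f$ helps, since $g$ can be concentrated above $M_f^\gamma$.
\item Extending the flux lower bound to compact subsets of $(0,\sup G_f)$ would still not reach the top level. It is also not justified by monotonicity of $\int_{\Omega_t}f$: the flux is $\int_{\Omega_t}(f-VG_f^{p-1})\dx$, and the $V$-term has indefinite sign.
\end{itemize}

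The correct fix is your last alternative. State the second part for $g\geq 0$ supported in $(0,M_f^\gamma)$, i.e.\ $\int_\Omega g(u)|\nabla u|_A^p\dx\asymp\int_0^{M_f^\gamma}g(t)\,t^{\eta/\gamma-1}\dt$. This is how the discrete analogue, Proposition~\ref{pro_key_g}, is formulated. It is also all that is used in the null-criticality part of Theorem~\ref{pro_crit}, where $g(t)=t^{-1}$ on $(0,M_f)$. With that formulation your proof is complete and coincides with the paper's.
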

\begin{proof} The proof is similar to  the proof of Proposition~3.1 in \cite{DP} or Lemma~9.2 in \cite{DFP}.
	By the chain rule,  the co-area formula, and the lemma above    we have %$$\tilde g(x)=g(v(x))v^{-p}(x)\frac{|\nabla G_f|_A^p(x)} {|\nabla G_f|(x)}$$ and
	\begin{multline*}%\label{IPP}
\int_{\Omega} g(u) |\nabla u|_A^p \dx = C\int_{\Omega} g(u) G_f^{-1+\eta}|\nabla G_f|\frac{|\nabla G_f|_A^p} {|\nabla G_f|}\dx \\
=\int_0^\infty g\big(t^{\gamma}\big)t^{-1+\eta} \left(\int_{\{G_f=t\}} \frac{|\nabla G_f|_A^p} {|\nabla G_f|}\dH\right)\mathrm{d}t\\
\leq C \int_{\inf G_f}^{\sup G_f} g\big(t^{{\gamma}}\big)t^{- 1+\eta } \mathrm{d}t= C \int_{\inf u}^{\sup u} {g(\tau)}\tau^{{\eta/\gamma-1}}\,\mathrm{d}\tau,
%\asymp \int_{\Omega} g(v)v^{-p}  |\nabla G_f |^p \dx\\=\int_{\Omega} g(v) |\nabla v|^p \dx =
%\int_{\inf v}^{\sup v} \frac{g(t)}{t}\,\mathrm{d}t
%\int_{\Omega}  g(G_f) |\nabla G_f|_A^p \dx\\=  \int_{\inf G_f }^{\sup G_f }  \tilde g(t)\int_{\{G_f =t\}} \frac{|\nabla G_f |_A^{p} }{|\nabla G_f |}\dH \asymp \int_{\inf G_f }^{\sup G_f }  \tilde g(t)\dt\\=,
\end{multline*}
while if in addition, $\int_{\Gw}VG_{f}^{p-1} \dx \leq  0$, then the inequality can be replaced by the equivalence sign.
%	The argument for the second estimate follows along the same lines.
\end{proof}

%%%%%%%%%%%%%%%%%%%%%%%%%%%%%%%%%
\mysection{Construction of null and positive-critical Hardy weights}\label{sec_constr} 
In the present section, we establish the criticality of certain Hardy weights and distinguish between the null-critical and the positive-critical cases. In particular, the following result,  Theorem~\ref{pro_crit}, implies Theorem~\ref{pro_opt}.

Recall the following notation. Let $1<p<\infty$,  $0\leq \vge \leq 1$,  $\gamma=\gamma_{\vge}=(p-1+\vge)/p$, and $c_\vge:={\gamma^{1-p}}$. 
Suppose that $Q_{A,c_\vge V}$ is subcritical in $\Gw$ which implies that  $Q_{A,V}$ is also subcritical in $\Gw$  by  \cite[Corollary~4.17]{PP} as $c_\vge \geq 1$ and $Q_{A,0}\geq 0$. 
Let    $G_f<\infty$ be   the Green potential of $Q_{b,c_\vge V}$ for $0\lneqq f\in L^1(\Gw) \cap L^{p'}_\loc(\Gw)$, i.e., $G_f$ is the unique minimal positive solution of the equation 
	$$ Q'_{A,0}[G_f] +c_\vge V G_f ^{p-1}=f \quad  \mbox{in } \Gw.$$

	We consider  
	\begin{equation*}
		W_{1+\vge} =\frac{Q'_{A,V}[G_f^\gamma]}{G_f^{\gamma(p-1)}}= \gamma^{p-1} 
		\left(  (\gamma-\vge)\frac{|\nabla G_{f}|_A^p}{G_{f}^p}
		+\frac{f}{G_{f}^{p-1}}\right), 
	\end{equation*}
	 as a candidate for the Hardy weight for the functional $Q_{A,V}$.

\begin{theorem}\label{pro_crit}	
	Suppose that $Q_{A,c_\vge V}$ is subcritical in $\Gw$ and let $G_f $ be a Green potential of $Q'_{A,c_\vge V}$ in $\Gw$ for $0\lneq f\in L^1(\Gw)\cap L^{p'}_\loc(\Gw)$. Then, $W_{1+\vge}$ is a Hardy weight for $Q_{A,V}$ which is strictly positive if $f>0$. Assume 
	\begin{equation*}%\label{eq:assum_G_G_V3}
	\lim\limits _{x\to \bar{\infty}}G_{f}(x)=0,	\qquad 				
	\int_{\Gw}V^-|G_{f}|^{p-1} \dx<\infty.
	\end{equation*}
	 Then,  $Q_{A,V-W_{1+\vge}}$ is critical in $\Gw$ with ground state $v_\vge:= G_f^{\gamma}$  for all $0\leq \vge \leq 1$ and positive critical with respect to $W_{1+\vge}$  for all $0< \vge \le 1$. %Furthermore, if $f>0$, then $W_{1+\vge}$ is a strictly positive Hardy weight for $0\leq \vge\leq 1$. \medskip
If  additionally  $\int_{\Gw}VG_{f}^{p-1} \dx \leq  0$, then $Q_{A,V-W_{1}}$ is null-critical
 with respect to $W_{1}$.
 %  In particular, if $f>0$, then $W_1$ is a strictly positive optimal Hardy weight for $Q_{A,V}$ with a ground state $v_0$. 
\end{theorem}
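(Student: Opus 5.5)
The first assertion, that $W_{1+\vge}$ is a Hardy weight for $Q_{A,V}$ and is strictly positive when $f>0$, is Corollary~\ref{cor_1}. By the AAP theorem (Theorem~\ref{theorem:AP}), the positive solution $v_\vge=G_f^\gamma$ of $Q'_{A,V-W_{1+\vge}}(u)=0$ gives $Q_{A,V-W_{1+\vge}}\ge 0$. So the real work is to prove criticality and to decide whether $\int W_{1+\vge}v_\vge^p$ is finite.

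For criticality we use Lemma~\ref{lem_crit}(iii) and build a null-sequence of the form $\varphi_k=v_\vge w_k$. Here $w_k=\psi_k(v_\vge)$, where $\psi_k$ is a cutoff in the level variable $t=u=G_f^\gamma$:
\begin{itemize}
\item $\psi_k=1$ on $[\delta_k,\sup u)$;
\item $\psi_k$ vanishes near $0$;
\item $\psi_k$ is logarithmic in between, e.g. $\psi_k(t)=\log(t/\delta_k^2)/\log(1/\delta_k)$ on $[\delta_k^2,\delta_k]$, with $\delta_k\to 0$.
\end{itemize}
Since $G_f\to 0$ at $\bar\infty$, the function $w_k$ is compactly supported, and $w_k\to 1$ locally uniformly, so the normalisation on a fixed ball $B$ holds. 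By Lemma~\ref{simpl_energy} applied to $Q_{A,V-W_{1+\vge}}$ with the solution $v_\vge$, it suffices that $Q^{v_\vge}_{\mathrm{sim}}(w_k)\to0$. Lemma~\ref{simple_est} reduces this further to showing $X(w_k)\to0$ and $Y(w_k)\lesssim 1$; for $p>2$ the cross term is then $\le X^{2/p}Y^{1-2/p}\to0$.

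Both quantities are integrals of $g(u)|\nabla u|_A^p$:
\[X(w_k)=\int u^p\psi_k'(u)^p|\nabla u|_A^p, \qquad Y(w_k)\le\int_{\{u<\delta_k\}\cup\{u\ge \delta_k\}}\psi_k(u)^p|\nabla u|_A^p.\]
We bound them with the upper coarea estimate, Proposition~\ref{pro_key} with $\eta=\vge$. This needs Lemma~\ref{coarea grad G}, applied to $Q_{A,c_\vge V}$, where $V^-G_f^{p-1}$ is integrable by hypothesis. The estimate gives
\[X(w_k)\lesssim \frac{1}{\log(1/\delta_k)^p}\int_{\delta_k^2}^{\delta_k} t^{\vge/\gamma-1}\,dt,\]
which tends to $0$; for $\vge=0$ it is $\lesssim \log(1/\delta_k)^{1-p}\to0$, using $p>1$. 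Likewise $Y$ is bounded by $\int_0^{\sup u}t^{\vge/\gamma-1}\,dt$ for $\vge>0$, which is finite. For $\vge=0$ this integral diverges logarithmically, so the $\vge=0$ case needs care; this is the main obstacle. In that case I would instead estimate $Q(v_0w_k)$ directly through Lemma~\ref{simple_est} with a power-type cutoff, or use the Picone-type computation of \cite{DP,V}. There one observes that for $p>2$ the relevant term $\int v^2|\nabla v|^{p-2}w^{p-2}|\nabla w|^2$ is itself an integral of $g(u)|\nabla u|^p$ with $g(t)=t^{2-p}\cdot t^{p-2}\psi_k^{p-2}\psi_k'^2 t^{...}$. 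This reduces to $\int \psi_k^{p-2}\psi_k'^2 t\,dt\sim \log(1/\delta_k)^{-1}\to0$, so the unbounded $Y$ is avoided.

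Positive- versus null-criticality is decided by the quantity
\[\int W_{1+\vge}v_\vge^p = \gamma^{p-1}\Big((\gamma-\vge)\int |\nabla G_f|_A^p G_f^{\gamma p-p} + \int f G_f^{\gamma p-p+1}\Big).\]
The second term equals $\int fG_f^{\vge}$. For $\vge>0$ it is finite, since $G_f$ is bounded (it is continuous, tends to $0$ at infinity, and $f\in L^1$). The first term is a multiple of $\int u^{-\vge p/\gamma\cdot0}\cdots$; more precisely it equals $C\int u^{-p}\,u^{p}\cdots$. I would rewrite it as $\int g(u)|\nabla u|_A^p$ with $g(t)=t^{-p}t^{p}\cdot$const, so that Proposition~\ref{pro_key} bounds it by $\int_0^{\sup u}t^{\vge/\gamma-1}\,dt<\infty$ for $\vge>0$; this gives positive criticality. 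For $\vge=0$ we have $\gamma=(p-1)/p$, and under the extra assumption $\int VG_f^{p-1}\le0$ the two-sided estimate of Proposition~\ref{pro_key} gives
\[\int W_1v_0^p\ \ge\ C\int_0^{\sup u}t^{-1}\,dt=\infty,\]
which is null-criticality.
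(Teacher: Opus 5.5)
Your proposal is correct and follows essentially the paper's proof: the same logarithmic-cutoff null-sequence $v w_k$, the reduction via the simplified energy and Lemma~\ref{simple_est}, the coarea bounds of Proposition~\ref{pro_key}, and the same finiteness/divergence computation of $\int W_{1+\varepsilon}v^p$. The one point you single out as the main obstacle ($\varepsilon=0$, $p>2$, where $Y(w_k)\sim\log k$) needs no separate treatment. The paper simply uses $X^{2/p}Y^{1-2/p}\lesssim(\log k)^{-2(p-1)/p}(\log k)^{(p-2)/p}=(\log k)^{-1}\to0$, which is the same rate your direct estimate of the second simplified-energy term produces.
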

\begin{proof}For $0\leq \vge \leq 1$, let $v := v_\vge = G_f ^{\gamma}$. Without loss of generality, we may assume that $0<v\leq 1$ in $\Omega$. Then by Corollary~\ref{cor_1}, $v> 0$ satisfies $Q'_{A,0}[v]+(V-W_{1+\vge})|v|^{p-2}v=0$ in $\Omega$, i.e., $Q'_{A,V-W_{1+\vge}}\geq 0$ in $\Gw$.

 \medskip

{\em Criticality:}  We construct a null-sequence for the functional $Q_{A,V-W_{1+\vge}}$ in a similar fashion as in the proof of \cite[Theorem~1.3]{PS}.
Let
\begin{align*}
\varphi_{k}(t):=
\begin{cases}
0 & 0\leq t\leq \frac{1}{k^{2}},\\
2+\frac{\log t}{\log k} & \frac{1}{k^{2}}\leq t\leq \frac{1}{k},\\
1 & \frac{1}{k}\leq t,
\end{cases}
\qquad \mbox{and set } \, w_k:=\varphi_k(v).
\end{align*}
Criticality of $Q_{V-W_{1+\vge}}$ follows from the fact that the sequence $(vw_k)_{k\in\mathbb{N}}$  is a null-sequence  which we show below.\smallskip
 
Since $\lim_{x\to \bar{\infty}}G_{f}=0$, we have that $w_k$ and, therefore, also $vw_k$ are compactly supported in $\Gw$ for all $k\in \N$. 
%Fix a closed ball $B$ in $\Gw$.
By Lemma \ref{simpl_energy} we have
$$Q_{A,V-W_{1+\vge}}(vw)\asymp Q^v_{\mathrm{sim}}(w)$$
 for  $0\leq w\in C_c^\infty(\Omega)$, where $Q^v_{\mathrm{sim}}$ is the simplified energy for the functional $Q_{A,V-W_{1+\vge}}$, associated to $v$ (see Definition~\ref{defse}).
Thus, we need to prove 
\begin{equation*}%\label{eq_null}
\lim_{k\to\infty}\frac{Q^v_{\mathrm{sim}}(w_k)}{\int_{B}(vw_k)^p\dx}=0,
\end{equation*}
where $B\Subset \Gw$ is a fixed ball in $\Gw$. 
Set as in Lemma~\ref{simple_est}
$$X_k:=X(w_k)=\int_{\Omega}v^p|\nabla w_k|^p\dx,\quad \mbox{and }\quad Y_k:=Y(w_k)=\int_{\Omega}w_k^p|\nabla v|^p\dx.$$
By the first  coarea estimate, Proposition~\ref{pro_key} with  $u=v$, $\eta=\vge$ and $g(t)=t^p|\varphi_k'(t)|^p {\leq} (\log k)^{-p} $,   we have since we assumed $0<v\leq 1$
%and Lemma~\ref{coarea grad G}, we obtain
\begin{multline*}
X_k=\int_{\Omega}v^{p}|\varphi_k'(v)|^p |\nabla v|_A^p\dx\leq 
C\int_0^1 (t|\varphi_k'(t)|)^p t^{\vge/\gamma-1}{\mathrm{d}t}=\\%[5mm]
 C\left(\frac{1}{\log k}\right)^p \int_{\frac{1}{k^2}}^{\frac{1}{k}}t^{\vge/\gamma-1}{\mathrm{d}t}\leq \displaystyle{
 \begin{cases}
 \dfrac{C}{\left(\log k\right)^{p} k^{\vge /\gamma}} & 0<\vge\leq 1,\\[5mm]
 \dfrac{C}{\left(\log k\right)^{p-1}} & \vge = 0.
 \end{cases}}
 \end{multline*}
 
%where equality in the last estimate holds if $\vge=0$. 
Also, with $g(t)= \varphi _k(t)^p$, as $  \varphi_k \leq 1_{[1/k^2,1]}$, the first  coarea estimate, Proposition~\ref{pro_key}, gives
\begin{multline*}
Y_k=\int_{\Omega}\varphi _k^p(v) |\nabla v|^p\dx  \leq C \int_0^1  \varphi_k(t)^pt^{\vge/\gamma-1}{\mathrm{d}t} 
\\\leq C\int_{\frac{1}{k^2}}^{1}t^{\vge/\gamma-1}{\mathrm{d}t}\leq  
\displaystyle{
	\begin{cases}
	C
	& 0<\vge\leq 1,\\[2mm]
	C\log k & \vge = 0.
	\end{cases}}
\end{multline*} 
Furthermore, we clearly have
$$\int_B (vw_k)^p\dx\asymp 1.$$
Recall that by Lemma~\ref{simple_est}, the simplified energy can be estimated from above by
$$Q^v_{\mathrm{sim}}(w_k)\leq C\left\{
  		\begin{array}{ll}
  			X(w_k) & \mbox{ if } 1<p\leq 2,\\\\
			X(w_k)+\ {X(w_k)^{2/p}} Y(w_k)^{1-2/p} & \mbox{ if } p>2.
		\end{array}
\right.$$
Therefore, $\lim_{k\to\infty}Q^v_{\mathrm{sim}}(w_k) =0$.   Thus, $(vw_k) $ is a null-sequence for the functional $Q_{A,V-W_\vge}$. Hence,  by Lemma~\ref{lem_crit}, for any $0\leq \vge \leq 1$, the functional $Q_{A,V-W_\vge}$ is critical in $\Gw$  and $v=G_f^\gamma$ is its Agmon ground state. 
% We consider the case $p=2$. In this case $c_\vge =1$, and thus by hypothesis $Q_{A,V}$ is subcritical, i.e., in particular, $Q_{A,V}' \geq 0$. Hence, by AAP theorem there exists a $G>0$ such that $Q_{A,V}'[G]=0$ in $\Gw$. Now, observe that
% \begin{align*}
%     Q_{A,V-W_{1+\vge}}'[\frac{v_\vge}{G}] & = Q_{A,V}'[\frac{v_\vge}{G}] - W_{1+\vge} [\frac{v_\vge}{G}] = \frac{Q_{A,0}'[\frac{v_\vge}{G}]}{G} - W_{1+\vge} \frac{v_\vge}{G} \\
%     & = \frac{Q_{A,0}'[\frac{v_\vge}{G}] - W_{1+\vge} v_\vge}{G} = \frac{Q_{A,0}'[\frac{v_\vge}{G}] - W_{1+\vge} v_\vge}{G}
% \end{align*}
\medskip

{\em Positive-criticality with respect to $W_{1+\vge}$ for $0<\vge\leq1$:} %Let $v:=(G_f)^{\frac{p-1+\vge}{p}}$ be the Agmon ground state of the functional $Q_{A,V-W_{1+\vge}}$. 
The first coarea estimate, Proposition~\ref{pro_key}, applied with $u=G_f$, $\eta=1$ and $g(t)=t^{-1+\vge}$ implies 
\begin{multline*}
\int_{\Gw}  W_{1+\vge} v^p \dx\leq   C(1-\vge)\int_{\Gw} \frac{|\nabla G_f|^p_A}{G_f^{\,p}} G_f^{
	\gamma p}\dx+C\int_\Gw \frac{f}{G_f^{p-1}}G_f^{\gamma p}\dx\\  
\leq C(1-\vge)\int_{0}^{\sup G_f} \frac{\dt}{t^{1-\vge}} + C\int_\Gw fG_f^{\,\vge}\dx< \infty,
\end{multline*}
where the finiteness of the first integral follows as $G_f$ is bounded due to the assumption $\lim_{x\to \bar{\infty}}G_{f}=0$, and the finiteness of the second integral follows from the assumption $f\in L^1(\Gw)$ and the boundedness of $G_f$.
Hence, $Q_{A,V-W_{1+\vge}}$ is positive critical.
\medskip

{\em Null-criticality with respect to $W_1$:} Let $v=v_0=G_f^{({p-1})/{p}}$. The  coarea estimate, Proposition~\ref{pro_key}, applied with $u=G_f$, $\eta=1$ under the assumption $\int_{\Gw}VG_{f}^{p-1} \dx \leq  0$ and $g(t)=t^{-1}$ on $(0,M_f)$,  implies that
$$\int_{\Gw}  W_1  v^p\dx\geq   C\int_{\Gw} \frac{|\nabla G_f|^p_A}{G_f^{\,p}} G_f^{\,p-1}\dx\geq  C\int_{0}^{M_f} \frac{\dt}{t}
=\infty.$$
This finishes the proof.
\end{proof}
%%%%%%%%%%%
%%%%%%%%%%%%%%%%%%%%
\begin{Rem} [Best constant at infinity]
Recall that by \cite[Corollary~3.4.]{KP}, if $W_1$ is an optimal Hardy weight of $Q$ in $\Gw$, then 
$$\gl_0(Q_{A,V},W_1,\Gw)=\gl_\infty(Q_{A,V},W_1,\Gw)=1,$$
i.e., $1$ is also the best Hardy constant at infinity.  
\end{Rem}
\begin{Rem} [Optimal Hardy weight for $Q_{A,c_0V}$]
Recall that the constant  $c_0=\left( p/(p-1)\right)^{p-1}>1$.  Theorem~\ref{pro_crit}	provides a strictly positive optimal Hardy weight $W_1$ for $Q_{A,V}$ with ground state $v_0$, provided $Q_{A,c_0V}$ is subcritical and an appropriate Green potential of $Q_{A,c_0V}$ satisfies certain conditions. Nevertheless, we can actually construct optimal Hardy weight for the functional $Q_{A,c_0V}$ itself under certain conditions. For instance, our assumptions for $\vge=0$
	$$\int_{\Gw}V^-|G_{f}|^{p-1} \dx<\infty \quad \mbox{ and } \quad \int_{\Gw}V|G_{f}|^{p-1} \dx\leq 0,$$  imply that  $$\int_{\Gw}|V||G_{f}|^{p-1} \dx<\infty.$$
We note that under the further assumption 
		$$\widetilde W_1:=(c_0-1)V+W_1\gneqq 0,$$ 
Theorem~\ref{pro_crit} yields the existence of a strictly positive optimal Hardy weight for $Q_{A,c_0V}$ itself:
Indeed, 
$$\int_{\Gw} \widetilde W_1|v_0|^p\dx =\int_{\Gw} ((c_0-1)V+W_1)|v_0|^p\dx=\infty,$$ 
		since $\int_{\Gw}|V||v_0|^p \dx <\infty$ and  $\int_{\Gw}W_1 |v_0|^p\dx =\infty$. Then, Remark~\ref {ab_not_c}.(c) implies that 
		$\widetilde W_1$ is an optimal Hardy weight for $Q_{A,c_0V}$. %Furthermore, if $\widetilde W_1>0$, then  		$\widetilde W_1$ is a strictly positive optimal Hardy weight for $Q_{A,c_0V}$.
\end{Rem}

If $V=0$, we can expand our considerations to the case $1-p< \vge\leq 0$.
\begin{theorem}\label{thm_v0}	
	Let $1<p<\infty$ and $1-p< \vge \leq 1$.	Suppose that $Q_{A,0}$ is subcritical in $\Gw$, and let   $G_f$  be a Green potential of $Q_{A,0}$ for $0\lneqq f\in L^1(\Gw) \cap L^{p'}_\loc(\Gw)$. Then, $	W_{1+\vge} :={Q'_{A,0}[G_f^\gamma]}/{G_f^{\gamma(p-1)}}$ is a Hardy weight which is strictly positive if $f>0$. Assume further $$\lim\limits _{x\to \bar{\infty}}G_{f}=0.$$
Then, the functional $Q_{A,-W_{1+\vge}}$ is
\begin{enumerate}
	\item[(a)]   subcritical in $\Gw$ for all $1-p< \vge <0$,
	\item[(b)]    critical in $\Gw$ with ground state $G_f^\gamma$  for all $0\leq \vge \leq 1$,
	\item [(c)]    positive critical with respect to $W_{1+\vge}$  for all $0< \vge \leq 1$,
	\item[(d)]  null-critical with respect to $W_{1}$ for $\vge=0$. In particular, if $f>0$, then $W_{1}$ is a strictly positive optimal Hardy weight for $Q_{A,0}$.
\end{enumerate} 
\end{theorem}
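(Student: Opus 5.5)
With $V=0$, Corollary~\ref{cor_1} and Theorem~\ref{pro_crit} apply almost verbatim. Parts (b)--(d) reduce to Theorem~\ref{pro_crit} with $V=0$, and the only genuinely new case is (a), where $\gamma=(p-1+\vge)/p\in(0,(p-1)/p)$.

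First, we check that $W_{1+\vge}$ is a Hardy weight for all $1-p<\vge\le 1$. Lemma~\ref{weak_lapl} with $F(t)=t^\gamma$ and $Q'_{A,0}(G_f)=f$ gives
\[
W_{1+\vge}=\frac{Q'_{A,0}[G_f^\gamma]}{G_f^{\gamma(p-1)}}=\gamma^{p-1}\Big((p-1)(1-\gamma)\frac{|\nabla G_f|_A^p}{G_f^p}+\frac{f}{G_f^{p-1}}\Big).
\]
Since $(p-1)(1-\gamma)=\gamma-\vge$, this agrees with the formula of Corollary~\ref{cor_1}. For $\gamma\in(0,1]$ both terms are nonnegative, and $W_{1+\vge}\gneqq 0$ because $f\gneqq0$. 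It is strictly positive when $f>0$. Moreover $G_f^\gamma$ is a positive solution of $Q'_{A,-W_{1+\vge}}(u)=0$, so by the AAP theorem (Theorem~\ref{theorem:AP}) $Q_{A,-W_{1+\vge}}\ge0$.

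For (b)--(d), since $V=0$ we have $c_\vge V=0$, so $G_f$ is the Green potential of $Q'_{A,c_\vge V}$. Also $\int V^-G_f^{p-1}=0<\infty$ and $\int VG_f^{p-1}=0\le 0$. Theorem~\ref{pro_crit} therefore applies directly and gives criticality with ground state $G_f^\gamma$ for $0\le\vge\le1$, positive criticality for $0<\vge\le1$, and null-criticality for $\vge=0$. Together with $W_1>0$ when $f>0$, this makes $W_1$ a strictly positive optimal Hardy weight in the sense of Definition~\ref{def_opt}.

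For (a), the plan is to exhibit a strictly larger Hardy weight. Fix $\vge'\in(\vge,0]$, for instance $\vge'=0$, with $\gamma'=\gamma_{\vge'}$. The key step is to compare
\[
W_{1+\vge'}-W_{1+\vge}=\big(\gamma'^{p-1}(\gamma'-\vge')-\gamma^{p-1}(\gamma-\vge)\big)\frac{|\nabla G_f|_A^p}{G_f^p}+\big(\gamma'^{p-1}-\gamma^{p-1}\big)\frac{f}{G_f^{p-1}}.
\]
The second coefficient is positive since $\gamma'>\gamma$. For the first, set $h(\gamma):=\gamma^{p-1}(\gamma-\vge)=(p-1)\gamma^{p-1}(1-\gamma)$. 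Then $h$ is increasing on $(0,(p-1)/p]$, since $h'(\gamma)=(p-1)\gamma^{p-2}\big((p-1)-p\gamma\big)\ge0$ there, with maximum at $\gamma=(p-1)/p$, i.e.\ $\vge=0$. Hence for $\vge<\vge'\le0$ both coefficients are nonnegative and $W_{1+\vge'}\gneqq W_{1+\vge}$; indeed $f/G_f^{p-1}\gneqq0$. Since $W_{1+\vge'}$ is a Hardy weight, $Q_{A,-W_{1+\vge}}\ge (W_{1+\vge'}-W_{1+\vge})|\varphi|^p$ integrated, with a nontrivial nonnegative remainder. So $Q_{A,-W_{1+\vge}}$ is subcritical.

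The main point requiring care is the monotonicity of $h$ in (a). A minor technical point is that the remainder weight must lie in $L^q_{\loc}$, but it is a positive combination of $W_{1+\vge'}$ and $W_{1+\vge}$, which are locally in the required class since $G_f\in C^{1,\alpha}_\loc$, $G_f>0$ and $f\in L^{p'}_\loc$. I expect no further obstacles.
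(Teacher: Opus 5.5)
Your proposal is correct and matches the paper's proof: the paper also gets (b)--(d) directly from Theorem~\ref{pro_crit} with $V=0$, and gets (a) from the monotonicity of $W_{1+\vge}$ in $\vge$ on $(1-p,0]$, so that $W_{1+\vge}\lneqq W_1$ and $W_1-W_{1+\vge}$ is a Hardy weight for $Q_{A,-W_{1+\vge}}$; your computation of $h'$ makes explicit the monotonicity the paper only asserts. One small correction: $f\in L^{p'}_\loc$ does not in general give $L^q_\loc$ with $q>n/p$, so your justification for the regularity of the remainder weight $R$ does not hold as stated, but this is harmless because you can replace $R$ by $\min\{R,1\}$.
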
  
\begin{proof}
	Note that $W_{1+\vge}$ is an increasing function of $\vge$ in $(1-p,0]$, in particular, for $\vge\in (1-p,0)$, 
	$W_{1+\vge}\lneqq W_1$, and therefore for such $\vge$, $Q_{A,-W_{1+\vge}}$ is subcritical. The other claims of the theorem follow from Theorem~\ref{pro_crit}.    
\end{proof}

\section{The discrete case}\label{sec_disc}

In this section we  discuss the discrete case. By doing so, we complement the results in \cite{F} by presenting a family of critical Hardy weights constructed from Green potentials of subcritical functionals which are optimal at the end of a parameter interval. Our proofs deviate from the ones in \cite{F} by considering also the case $V\neq 0$, and by proving criticality without a bounded oscillation assumption, and hence avoid local finiteness of the underlying graph. The bounded oscillation assumption is only needed for null-criticality.

Let $X$ be a countable set with the discrete topology and $C(X)$ be the space of all real-valued functions on $X$. We denote by $C_c(X)$ the space of all finitely supported functions on $X$. For any positive or summable function $f$ on a discrete set $A$, we write $\sum_{A} f$ instead of $\sum_{x\in A} f(x)$. 

We equip $X$ with a strictly positive weight function $m:X\to (0,\infty)$ which extends to a measure on $X$ via $m(A)=\sum_{x\in A} m(x)$. Let $b:X\times X\to [0,\infty)$ be a symmetric function such that $b(x,x)=0$ for all $x\in X$, and  which is locally summable, i.e., $\sum_{y\in X} b(x,y)<\infty$ for all $x\in X$. Such a function $b$ is said to be a graph over the measure space $(X,m)$.  We write $x\sim y$ if $b(x,y)>0$ for $x,y\in X$. We assume that graph $b$ is connected, i.e., for every $ x,y\in  X $ there are $ x= x_{0}  \sim\ldots \sim x_{n}=y$. Furthermore, we let $V:X\to \R$ be a function. We consider the following operator
\begin{align*}
Q'(u(x)):=Q'_{b,V}(u(x))=\frac{1}{m(x)}\sum_{y\in X} b(x,y)(\nabla_{xy}u)^{\langle p-1\rangle}+V(x)u(x)^{\langle p-1\rangle},
\end{align*}
where $\nabla_{xy}u=u(x)-u(y)$ and $a^{\langle q\rangle}=|a|^{q-1}a$ for $a\in\R$ and $q\in(1,\infty)$, which acts on
on \begin{align*}\mathcal{F}(X)&:= \{f\in C(X)\mid \sum_{y\in X} b(x,y)|\nabla_{xy}f|^{p-1}<\infty \mbox{ for all } x\in X\}\\
	&= \{f\in C(X) \mid \sum_{y\in X} b(x,y)|f(y)|^{p-1}<\infty  \mbox{ for all } x\in X\}
\end{align*}
where equality holds by the local summability property.
We associate to $Q'$ the following energy functional, 
$Q_{b,V}$ on $C_c(X)$
\begin{align*}
Q(\varphi):=Q_{b,V}(\varphi)= \frac{1}{2}\sum_{x,y\in X} b(x,y)|\nabla_{xy}\varphi|^p+\sum_{x\in X}m(x)V(x)|\varphi(x)|^p
\end{align*}
for which we assume $$Q(\varphi)\geq 0,\qquad \varphi\in C_c(X).$$  As in the continuous case, we say that $Q_{b,V}$ is \emph{subcritical} if $Q_{b,V-W}\geq 0$ on $C_c(X)$ for some nontrivial nonnegative function $W$ and \emph{critical} otherwise.
The critical case is characterized by the existence of a null-sequence, i.e., a positive sequence $(w_k)$ in $C_c(X)$ such that $w_k(x)=1$ for some fixed $x$ and $\lim_{k\to \infty} Q_{b,V-W}(w_k)=0$. Furthermore, any null-sequence converges to the unique (up to a multiplicative constant) positive supersolution of the equation $Q'_{b,V-W}u=0$ in $X$, which is indeed a solution,  called the {\em Agmon ground state}, cf.~\cite[Theorem~5.1]{Fischer1}. Suppose that $Q_{b,V-W}$ is critical, then it is said to be {\em positive-critical} with respect to $W$ if its ground state belongs to $\ell^p(X, W m)$, and {\em null-critical} otherwise. 

In the subcritical case, there are plenty of positive superharmonic functions, i.e., supersolutions of the equation $Q'_{b,V}[u]=0$ in $X$. For given $f\geq 0$, we denote the minimal positive solution of the equation $Q'_{b,V}[u]=f$ in $X$ by $G_f$ if it exists, and call $G_f$ the \emph{Green potential} of $Q_{b,V}$ in $X$ with charge $f$ in this case.

\subsection{Simplified energy}
The simplified energy replaces the ground state transformation for general $p$ by providing an estimate of the energy functional $Q$ which contains only non-negative terms. It is given as follows and the estimates of the energy functional is found right below.

\begin{definition}[Simplified {Lagrangian} and energy] \label{Def:SEF}
	 Let ${v} \in  C(X)$ be a positive function.  For $w\in C_{c}(X) $, let
	 \begin{align*}
	 S^v({w})_{xy}:= &\ b(x,y) {v}(x) {v}(y) |\nabla_{xy} w|^2 \\
	 &\cdot \left[ |\nabla_{xy} {v}|\frac{|w(x)|+|w(y)|}{2} + ({v}(x) {v}(y))^{1/2} |\nabla_{xy} w|\right]^{p-2},
	 \end{align*}
	 where we set $0 \cdot \infty =0$ in the case $1 < p < 2$. Define the {\em simplified energy functional}  
		of the functional $Q$ with respect to  ${v}$  as
		$$Q^v_{\mathrm{sim}}(w):= \sum_{x,y\in X} S^v({w})_{xy} . $$ 
\end{definition}
 
 The following estimate is found in {\cite[Theorem~3.1 and Eq. (3.5)]{F}}.
\begin{proposition}[Simplified energy estimate] \label{Prop:simp_energyg} For all strictly positive solutions $v$ of the equation   $ Q'[{u}]=0$  and for all  $ w  \in  C_c(X) $
	$$ Q^v_{\mathrm{sim}}(w)\asymp Q({{ v}}w),   $$
and there is a constant $C>0$ such that
\begin{align*}
	Q^v_{\mathrm{sim}}(w)\leq C\left\{
  		\begin{array}{ll}
  			X_v(w) & \mbox{ if } \;1<p\leq 2,\\\\
			X_v(w)+\left(\frac{X_v(w)}{Y_v(w)}\right)^{2/p}Y_v(w)  & \mbox{ if }\; p>2.
		\end{array}
\right.
\end{align*}
with
\begin{align*}
	X_v(w)&:=\sum_{x,y\in X} b(x,y)({v}(x){v}(y))^{p/2} |\nabla_{xy} w|^p,\\
 Y_v(w)&:=\sum_{x,y\in X} b(x,y) |\nabla_{xy} {v}|^p \left(\frac{ |w(x)|+|w(y)| }{2}\right)^p.
\end{align*} 
\end{proposition}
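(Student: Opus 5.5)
The plan is to follow \cite{F}: rewrite $Q(vw)$ via a discrete ground state representation as a sum over edges of nonnegative Lagrangians, compare each Lagrangian pointwise with $S^v(w)_{xy}$, and then derive the upper bounds by an elementary splitting together with H\"older's inequality.

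\emph{Step 1: ground state representation.} Let $v>0$ solve $Q'[u]=0$, let $w\in C_c(X)$, and put $\varphi=vw$. Test the equation for $v$ against the finitely supported function $|\varphi|^p/v^{p-1}=v|w|^p$ and sum by parts. This is legitimate because $v\in\mathcal F(X)$ and the support of $w$ is finite, so all the double sums converge absolutely. Subtracting the resulting identity from $Q(\varphi)$ should give
\[
Q(vw)=\frac12\sum_{x,y\in X} b(x,y)\,L_{xy},
\qquad
L_{xy}:=|\nabla_{xy}(vw)|^p-(\nabla_{xy}v)^{\langle p-1\rangle}\,\nabla_{xy}\big(v|w|^p\big).
\]
By the discrete Picone inequality each $L_{xy}$ is nonnegative. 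The potential $V$ drops out here precisely because $v$ is a solution.

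\emph{Step 2: pointwise comparison (main obstacle).} It remains to show $L_{xy}\asymp v(x)v(y)|\nabla_{xy}w|^2\big[|\nabla_{xy}v|\tfrac{|w(x)|+|w(y)|}{2}+(v(x)v(y))^{1/2}|\nabla_{xy}w|\big]^{p-2}$ with constants depending only on $p$. This is a purely real-variable inequality in four variables $a=v(x)$, $b=v(y)>0$ and $s=w(x)$, $t=w(y)$.
\begin{itemize}
\item First reduce to $s,t\ge0$, since replacing $w$ by $|w|$ only decreases $|\nabla w|$ while $L$ changes in a controlled way. (Alternatively, note that the null sequences we need are nonnegative anyway.)
\item Next use homogeneity: $L$ is homogeneous of degree $p$ in $(a,b)$ and in $(s,t)$ jointly, so one may normalize $a=1$, $b=\lambda$ and $s=1$ (or $t=1$).
\item Then compare the two sides through a Taylor expansion of $\tau\mapsto|\tau|^p$: near $s=t$ both sides vanish to second order in $s-t$, and away from the diagonal both are comparable to the $p$-th power terms.
\end{itemize}
This is exactly the content of \cite[Theorem~3.1]{F}, which I would invoke rather than redo the case analysis ($\lambda$ near $1$ versus far from $1$, and $s/t$ near $1$ versus far from $1$).

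\emph{Step 3: upper bounds.} For $1<p\le2$ the exponent $p-2$ is nonpositive. Dropping the first summand in the bracket can therefore only increase $S^v(w)_{xy}$, which gives $S^v(w)_{xy}\le b(x,y)(v(x)v(y))^{p/2}|\nabla_{xy}w|^p$, and summing yields $Q^v_{\mathrm{sim}}(w)\le X_v(w)$. For $p>2$, use $(\alpha+\beta)^{p-2}\le C(\alpha^{p-2}+\beta^{p-2})$.
\begin{itemize}
\item The $\beta$-part again produces $X_v(w)$.
\item The $\alpha$-part is $b\,v(x)v(y)|\nabla w|^2\,|\nabla v|^{p-2}\big(\tfrac{|w(x)|+|w(y)|}{2}\big)^{p-2}$. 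Write it as the product of $\big(b(vv)^{p/2}|\nabla w|^p\big)^{2/p}$ and $\big(b|\nabla v|^p(\tfrac{|w(x)|+|w(y)|}2)^p\big)^{1-2/p}$. H\"older's inequality with exponents $p/2$ and $p/(p-2)$ then bounds its sum by $X_v(w)^{2/p}Y_v(w)^{1-2/p}=(X_v/Y_v)^{2/p}Y_v$.
\end{itemize}
Combining the two parts gives the stated estimate.
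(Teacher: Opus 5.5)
Your proposal matches the paper, which gives no argument of its own and simply cites \cite[Theorem~3.1 and Eq.~(3.5)]{F}. In both, the two-sided comparison $Q(vw)\asymp Q^v_{\mathrm{sim}}(w)$ is delegated to \cite{F}, and your splitting $(\alpha+\beta)^{p-2}\le C(\alpha^{p-2}+\beta^{p-2})$ followed by H\"older with exponents $p/2$ and $p/(p-2)$ is the elementary upper bound the paper attributes to \cite{F}.

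One caveat: your suggested reduction to $w\ge 0$ by passing to $|w|$ does not work as stated. For $v(x)=v(y)$, $w(x)=1$, $w(y)=-1$ one has $L_{xy}=(2v(x))^p>0$, while $L_{xy}$ and $S^v(|w|)_{xy}$ both vanish for $|w|$. So for sign-changing $w$ you genuinely rely on the cited result; only nonnegative $w$ (namely $w_k=\varphi_k(v)$) is used later in the paper.
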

\subsection{Supersolution construction}

Let $1 < p < \infty$, $\varepsilon \in[	0,1]$, and define  $$ \gamma =\gamma_\varepsilon= \frac{p-1+\varepsilon}{p} \in (0,1] \ \ \mbox{ and }\ \ c_\vge=\gamma^{1-p}.$$ 
Let $f\geq 0$ be such that there exists the minimal positive solution  (Green potential with charge $f$) $0<G_f<\infty $  of $Q'_{b,c_\varepsilon V}G_f = f$, where the potential is rescaled by $c_\varepsilon$.

We consider the weight defined by
\begin{equation*}
W_{1+\varepsilon} := \frac{Q'_{b,V}[G_f^\gamma]}{G_f^{\gamma(p-1)}}.
\end{equation*}
$W_{1+\varepsilon}$ are discrete Hardy weights by AAP theorem \cite[Theorem 2.3]{Fischer1}.
We show an explicit representation of $W_{1+\varepsilon}$ in terms of the Green potential $G_f$ and the charge $f$. This refines the considerations of \cite[Proposition 4.2]{F}.
\begin{lemma}[Representation of $W_{1+\varepsilon	}$] \label{lem_rep_W1ep}
For $ W_{1+\varepsilon} $, $\varepsilon\in[0,1]$, defined as above, we have
\begin{align*}\label{eq:rep_F}
    &W_{1+\varepsilon}(x)= \gamma^{p-1} \frac{f(x)}{G_f(x)^{p-1}} \\
%	=& \frac{1}{ m(x)} \sum_{y \sim x} b(x,y) \frac{\left|\nabla_{xy} G_f^\gamma\right|^p}{{M}_{p,\gamma}(G_f(x), G_f(y))}\\
	&+  \frac{1}{m(x) G_f(x)^{\gamma p}} \sum_{y \in X} b(x,y){F}_\varepsilon(G_f(x)/ G_f(y)) (\nabla_{xy} G_f)^{\langle p-1 \rangle} 
{\int_{G_f(y)}^{G_f(x)}  t^{-1+\varepsilon}{\dt}}   \,,
    \end{align*}
where for $A = G_f(x)$, $B = G_f(y)$, $r=B/A$ the  function ${F}_\varepsilon$ is given by
\begin{align*}
{F}_\varepsilon(r) &:=\frac{\left( \frac{1-r^\gamma}{1-r} \right)^{p-1}  - \gamma^{p-1} }{\int_{r}^{1}  s^{-1+\varepsilon} \ds }=\frac{\left( \frac{A^\gamma - B^\gamma}{A-B} \right)^{p-1}  A^{\gamma}  - \gamma^{p-1} A^\varepsilon}{\int_{B}^{A}  t^{-1+\varepsilon} \dt }.
\end{align*}
\end{lemma}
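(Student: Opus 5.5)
The plan is a direct algebraic computation. First I will eliminate the potential $V$ using the equation satisfied by $G_f$. Then I will rewrite the resulting difference of $(p-1)$-powers edge by edge in the form claimed. Throughout, write $A=G_f(x)$ and $B=G_f(y)$.

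\emph{Step 1: well-definedness.} Since $0<\gamma\le 1$, we have $G_f^{\gamma(p-1)}\le 1+G_f^{p-1}$. As $G_f\in\mathcal F(X)$ and $b$ is locally summable, this gives $G_f^\gamma\in\mathcal F(X)$, so $Q'_{b,V}[G_f^\gamma]$ is defined pointwise. All sums below converge absolutely and may be combined termwise.

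\emph{Step 2: eliminate $V$.} The Green potential satisfies $Q'_{b,c_\vge V}[G_f]=f$ with $c_\vge=\gamma^{1-p}$. Solving for $V$ gives
\[
V(x)=\gamma^{p-1}\,\frac{f(x)-\frac{1}{m(x)}\sum_{y}b(x,y)(\nabla_{xy}G_f)^{\langle p-1\rangle}}{A^{p-1}}.
\]
I substitute this into
\[
W_{1+\vge}(x)=\frac{1}{m(x)A^{\gamma(p-1)}}\sum_y b(x,y)(\nabla_{xy}G_f^\gamma)^{\langle p-1\rangle}+V(x).
\]
This produces the term $\gamma^{p-1}f(x)/A^{p-1}$ plus
\[
\frac{1}{m(x)A^{\gamma(p-1)}}\sum_y b(x,y)\Bigl[(\nabla_{xy}G_f^\gamma)^{\langle p-1\rangle}-\gamma^{p-1}A^{(\gamma-1)(p-1)}(\nabla_{xy}G_f)^{\langle p-1\rangle}\Bigr].
\]

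\emph{Step 3: rewrite each edge term.} For $A\neq B$, the quotient $(A^\gamma-B^\gamma)/(A-B)$ is positive, so
\[
(\nabla_{xy}G_f^\gamma)^{\langle p-1\rangle}=\Bigl(\frac{A^\gamma-B^\gamma}{A-B}\Bigr)^{p-1}(\nabla_{xy}G_f)^{\langle p-1\rangle}.
\]
Hence the bracket equals $\bigl[((A^\gamma-B^\gamma)/(A-B))^{p-1}-\gamma^{p-1}A^{(\gamma-1)(p-1)}\bigr](\nabla_{xy}G_f)^{\langle p-1\rangle}$. If $A=B$ the edge contributes zero on both sides, so only $A\ne B$ needs treatment.

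I write $A^{-\gamma(p-1)}=A^{\gamma}A^{-\gamma p}$ to produce the prefactor $1/(m(x)G_f(x)^{\gamma p})$. The key exponent identity is
\[
\gamma+(\gamma-1)(p-1)=\gamma p-p+1=\vge.
\]
With it, the bracket times $A^\gamma$ becomes $\bigl(\frac{A^\gamma-B^\gamma}{A-B}\bigr)^{p-1}A^\gamma-\gamma^{p-1}A^\vge$. Multiplying and dividing by $\int_B^A t^{-1+\vge}\dt$, which is nonzero for $A\ne B$, yields exactly $F_\vge(A/B)\,(\nabla_{xy}G_f)^{\langle p-1\rangle}\int_{G_f(y)}^{G_f(x)}t^{-1+\vge}\dt$ with $F_\vge$ in its second form.

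\emph{Step 4: the two forms of $F_\vge$.} To check that both expressions for $F_\vge$ agree, divide numerator and denominator by $A^\vge$. The substitution $t=As$ gives $\int_B^A t^{-1+\vge}\dt=A^\vge\int_r^1 s^{-1+\vge}\ds$. In the numerator, the power of $A$ becomes $(\gamma-1)(p-1)+\gamma-\vge=0$, leaving $((1-r^\gamma)/(1-r))^{p-1}-\gamma^{p-1}$.

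There is no real obstacle here. The only points needing care are the sign convention for $\langle p-1\rangle$ powers, which is handled by the positivity of the difference quotient, and the bookkeeping of exponents of $A$.
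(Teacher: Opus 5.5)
Your proposal is correct and follows essentially the paper's own argument. Like the paper, you isolate $V$ from $Q'_{b,c_\varepsilon V}G_f=f$, substitute into $W_{1+\varepsilon}$, use $\gamma p-(p-1)=\varepsilon$, and factor out $(\nabla_{xy}G_f)^{\langle p-1\rangle}$; your well-definedness check and your verification that the two forms of $F_\varepsilon$ agree are details the paper leaves implicit. One small labeling point, inherited from the statement itself: your Step 4 correctly identifies the second form with $F_\varepsilon(r)$ at $r=B/A=G_f(y)/G_f(x)$, so the label $F_\varepsilon(A/B)$ in Step 3 should read $F_\varepsilon(B/A)$, which matters because $F_\varepsilon(r)\neq F_\varepsilon(1/r)$ in general (e.g.\ $p=3$, $\varepsilon=0$, $r=1/8$).
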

\begin{proof}
Isolating the  potential $V$ from   $Q'_{b,c_\varepsilon V}G_f = f$ yields
\begin{equation*}%\label{eq:V_isolated}
 V(x) = \gamma^{p-1} \frac{f(x)}{G_f(x)^{p-1}} -\frac{ \gamma^{p-1} }{m(x)}\sum_{y \in X} b(x,y) \frac{(\nabla_{xy} G_f )^{\langle p-1 \rangle}}{G_f(x)^{p-1}}.
\end{equation*}
Since $ \gamma p-(p-1) = \varepsilon $, this leads directly to
\begin{align*} 
&W_{1+\varepsilon}(x) -\gamma^{p-1} \frac{ f(x)} {G_f(x)^{p-1}} =  \frac{Q'_{b,0}[G_f^\gamma] }{G_f^{\gamma(p-1)}} (x)+ V  (x) -\gamma^{p-1} \frac{ f(x)} {G_f(x)^{p-1}}\\
=&   \frac{1}{mG_f^{\,\gamma p}(x)} \sum_{y \in  X} b(x,y) \left[ (\nabla_{xy}G_f^\gamma)^{\langle p-1 \rangle}G_f^\gamma(x)   - \gamma^{p-1} (\nabla_{xy}G_f)^{\langle p-1 \rangle} G_f^\varepsilon(x) \right].
\end{align*} 
Factoring out the directed gradient component yields the desired representations.
\end{proof}

\begin{lemma}\label{lem_Fbounded}
	For all $1<p<\infty$, $\varepsilon \in [0,1]$ and any compact $I\subset (0,\infty)$,  we have
	$$0<\inf_{r\in I} {F_\varepsilon(r)}  \leq \sup_{r\in (0,\infty)} {F_\varepsilon(r)}  <\infty.$$
\end{lemma}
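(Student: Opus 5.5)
The plan is to treat $F_\varepsilon$ as a one-variable function on $(0,\infty)$. I will show it extends continuously to $r=1$, determine its sign away from $r=1$, and compute its limits as $r\to0$ and $r\to\infty$. Write $h(r):=\frac{1-r^\gamma}{1-r}$ for $r\neq1$ and $h(1):=\gamma$, and $D(r):=\int_r^1 s^{-1+\varepsilon}\ds$. Then $D(r)=(1-r^\varepsilon)/\varepsilon$ for $\varepsilon>0$ and $D(r)=-\log r$ for $\varepsilon=0$. Hence $D$ is smooth, positive on $(0,1)$, negative on $(1,\infty)$, with $D(1)=0$ and $D'(1)=-1$.

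\emph{Sign.} $h(r)$ is the slope of the chord of the concave function $t\mapsto t^\gamma$ between $r$ and $1$, since $\gamma\le1$. So $h$ is nonincreasing, and strictly decreasing if $\gamma<1$. It follows that $h(r)>\gamma$ for $r<1$ and $h(r)<\gamma$ for $r>1$. Because $t\mapsto t^{p-1}$ is increasing, the numerator $h(r)^{p-1}-\gamma^{p-1}$ has the same sign as $D(r)$, so $F_\varepsilon>0$ on $(0,1)\cup(1,\infty)$.

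\emph{Removable singularity at $r=1$.} A Taylor expansion $r^\gamma=1+\gamma\delta+\tfrac{\gamma(\gamma-1)}{2}\delta^2+O(\delta^3)$ with $r=1+\delta$ gives $h(1+\delta)=\gamma+\tfrac{\gamma(\gamma-1)}{2}\delta+O(\delta^2)$. Hence the numerator equals $(p-1)\gamma^{p-1}\tfrac{\gamma-1}{2}\delta+O(\delta^2)$, while $D(1+\delta)=-\delta+O(\delta^2)$. Therefore $F_\varepsilon$ extends continuously with $F_\varepsilon(1)=\tfrac{(p-1)(1-\gamma)}{2}\gamma^{p-1}>0$. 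Together with the sign step, $F_\varepsilon$ is continuous and strictly positive on $(0,\infty)$, so its infimum over any compact $I$ is attained and positive.

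\emph{Behaviour at the ends.} As $r\to0$, the numerator tends to $1-\gamma^{p-1}$. The denominator tends to $1/\varepsilon$ if $\varepsilon>0$ and to $+\infty$ if $\varepsilon=0$, so $F_\varepsilon$ has a finite limit. As $r\to\infty$, $h(r)\sim r^{\gamma-1}\to0$, so the numerator tends to $-\gamma^{p-1}$. Meanwhile $D(r)\to-\infty$, since it behaves like $-r^\varepsilon/\varepsilon$ or $-\log r$, and so $F_\varepsilon\to0$. A continuous function on $(0,\infty)$ with finite limits at both ends is bounded, which gives $\sup_{(0,\infty)}F_\varepsilon<\infty$.

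\emph{Main obstacle: the endpoint $\varepsilon=1$.} The delicate point is the strict positivity, and it breaks down exactly when $\gamma=1$. If $\varepsilon=1$, then $h\equiv1=\gamma^{p-1}$, so $F_1\equiv0$. Thus for $\varepsilon=1$ the statement holds only in the weakened form $0\le\inf F_1$, and the strict lower bound needs $\varepsilon\in[0,1)$. I would therefore prove the lemma for $\varepsilon\in[0,1)$ and record the degenerate case $F_1\equiv0$ separately. In that case Lemma~\ref{lem_rep_W1ep} reduces to $W_2=f/G_f^{p-1}$, which is consistent with the continuum formula in Corollary~\ref{cor_1} (there $\gamma-\varepsilon=0$).
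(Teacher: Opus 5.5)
Your proposal is correct and follows the same route as the paper: continuity off $r=1$, finite limits at $0$, $1$ and $\infty$, and positivity via the secant-versus-tangent slope of the concave map $t\mapsto t^\gamma$, where you also compute the limit $F_\varepsilon(1)=\tfrac{(p-1)(1-\gamma)}{2}\gamma^{p-1}>0$ that the paper only asserts is finite. You are also right that $\varepsilon=1$ gives $\gamma=1$ and $F_1\equiv 0$, so the strict lower bound fails there and holds only for $\varepsilon\in[0,1)$; the paper's proof overlooks this, but it is harmless because the lower bound is used only at $\varepsilon=0$.
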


\begin{proof}
Analyzing the function 
\begin{align*}
{F}_\varepsilon(r) &= \frac{\left( \frac{1-r^\gamma}{1-r} \right)^{p-1}  - \gamma^{p-1} }{\int_{r}^{1}  s^{-1+\varepsilon} \ds }  \,,
\end{align*}
we see that $F_\varepsilon$ is continuous on $(0,\infty)\setminus\{1\}$. So, it suffices to  check the limits as $t $ goes to $ 0,1$ and $\infty$. They all stay finite by standard arguments. Furthermore, the function $F_\varepsilon $ can also be seen to be positive  on $(0,\infty)$ by concavity of the function $t\mapsto t^\gamma$: Indeed, the nominator compares the $(p-1)$-th powers of the slope of the secant line from $t$ to $1$ to the slope of the tangent line at $1$ which changes sign at $t=1$ and so does the denominator. However, the limits are zero as $t\to 0$ (for $\varepsilon=0$) and $t\to\infty$, so the strict positivity	 of the infimum only  holds on compact subsets $I$ of $(0,\infty)$. 
\end{proof}

\subsection{Coarea estimates}
In this section we present the discrete version of the coarea estimates. Discrete coarea formulas of this type were first obtained for $p=2$ in \cite{KPP} and for $p\neq 2$ in \cite{F}. We present a  version of the coarea estimates under different assumptions.

\begin{lemma}\label{coareagradG}
	Let $1<p <\infty$. Assume $Q'_{b,c_\varepsilon V}$ is subcritical and let $G_f$ be a Green potential with respect to $Q'_{b,c_\varepsilon V}$ and charge $0\lneq f\in \ell^1(X,m)$. Assume that 
	$$\lim_{x\to \bar{\infty}}G_f(x)=0,\qquad \sum_X m{V^-} G_f^{p-1}<\infty.$$
	Then there exists a constant $C>0$ such that for all $t\in (\inf_X G_f, \sup_X G_f)$, we have
	$$h(t) := \sum_{\substack{x,y\in X \\ G_f(y)<t\le G_f(x)}} b(x, y)(\nabla_{x,y}G_f)^{p-1}\leq C.$$
	Moreover, if in addition $\sum_X m V G_f^{p-1}  \leq 0$, then there exists  $0<M_f< \sup_X G_f	$ such that on $(0,M_f)$ we have
$$h \asymp 1.$$
\end{lemma}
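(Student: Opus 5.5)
The plan is to mimic the continuous proof of Lemma~\ref{coarea grad G}, replacing the Gauss--Green theorem by a discrete Green formula on the superlevel set $\Omega_t:=\{x\in X\mid G_f(x)\ge t\}$. Because $\lim_{x\to\bar\infty}G_f=0$ and $t>\inf_X G_f\ge 0$, the set $\Omega_t$ is finite. This is the key point: all sums over $\Omega_t$ below are finite and may be rearranged freely, with no need for local finiteness.

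First, multiply the equation $Q'_{b,c_\varepsilon V}G_f=f$ by $m(x)$ and sum over $x\in\Omega_t$:
\begin{equation*}
\sum_{x\in\Omega_t}\sum_{y\in X} b(x,y)(\nabla_{xy}G_f)^{\langle p-1\rangle}=\sum_{\Omega_t} m\big(f-c_\varepsilon V G_f^{p-1}\big).
\end{equation*}
Each inner sum converges absolutely, since $G_f\in\mathcal F(X)$. Split the $y$-sum into $y\in\Omega_t$ and $y\notin\Omega_t$. The part with $x,y\in\Omega_t$ is a finite double sum of an antisymmetric quantity ($b$ is symmetric, $\nabla_{xy}=-\nabla_{yx}$), so it vanishes. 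What remains is the boundary flux
\begin{equation*}
\sum_{x\in\Omega_t,\,y\notin\Omega_t} b(x,y)(\nabla_{xy}G_f)^{p-1}=h(t).
\end{equation*}
Here $G_f(y)<t\le G_f(x)$, so $\nabla_{xy}G_f>0$ and $(\nabla_{xy}G_f)^{\langle p-1\rangle}=(\nabla_{xy}G_f)^{p-1}$. Thus
\begin{equation*}
h(t)=\sum_{\Omega_t} m\big(f-c_\varepsilon VG_f^{p-1}\big).
\end{equation*}

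For the upper bound, estimate $f-c_\varepsilon VG_f^{p-1}\le f+c_\varepsilon V^-G_f^{p-1}$. This gives $h(t)\le \|f\|_{\ell^1(X,m)}+c_\varepsilon\sum_X mV^-G_f^{p-1}=:C$, which is independent of $t$.

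For the lower bound, assume in addition $\sum_X mVG_f^{p-1}\le 0$. First observe that $\sum_X m|V|G_f^{p-1}<\infty$, since $\sum_X mV^+G_f^{p-1}\le\sum_X mV^-G_f^{p-1}<\infty$. As $t\downarrow 0$, the sets $\Omega_t$ increase to $X$, because $G_f>0$. By dominated convergence, $\sum_{\Omega_t}m(f-c_\varepsilon VG_f^{p-1})$ tends to $\sum_X mf-c_\varepsilon\sum_X mVG_f^{p-1}\ge\sum_X mf>0$. Fix $M_f$ with $0<M_f<\sup_X G_f$ small enough that this quantity exceeds half its limit for all $t\in(0,M_f)$. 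This yields $h\asymp1$ on $(0,M_f)$; for $t\le\inf_XG_f$ (if that infimum is positive) the same identity applies with $\Omega_t=X$.

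The main obstacle is justifying the cancellation of the interior terms without local finiteness. This rests on two facts: the finiteness of $\Omega_t$, and the absolute convergence of $\sum_y b(x,y)|\nabla_{xy}G_f|^{p-1}$ for each fixed $x$. Together they allow Fubini on the finite-times-countable sum. A secondary subtlety is that $h(t)$, as written, sums over ordered pairs with $G_f(y)<t\le G_f(x)$, which matches exactly the boundary flux above.
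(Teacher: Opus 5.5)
Your proof is correct and follows essentially the paper's route: a discrete Stokes (Green) formula on the finite superlevel sets $\{G_f\ge t\}$, combined with the equation $Q'_{b,c_\varepsilon V}G_f=f$ and the summability of $f$ and $V^-G_f^{p-1}$. The differences are minor. You derive the Stokes identity directly by antisymmetric cancellation rather than citing (4.2) of \cite{F}, and you apply it to $\{G_f\ge t\}$ itself rather than to differences $h(t)-h(s)$, giving $h(t)=\sum_{\{G_f\ge t\}}m\,(f-c_\varepsilon VG_f^{p-1})$ at once; you also state explicitly that $VG_f^{p-1}\in\ell^1(X,m)$ under the sign condition, which the paper's lower-bound argument uses without comment.
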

\begin{proof} The upper bound follows  from the boundedness of $G_f$ implied by by $G_f(x)\to 0$ as $x\to \bar{\infty}$. Clearly there are only finitely many $x$ with $G_f(x)\geq t$ for any $t>0$ and local summability of $b$ gives for any $t_0\in (\inf_X G_f, \sup_X G_f)$ a uniform upper bound $C=C(t_0)$ for all $t\geq t_0$. By the Stokes type formula (4.2) in \cite{F} we have for any $s,t\in (\inf_X G_f, \sup_X G_f)$
\begin{align*}
	h(t)-h(s)&=\sum_{ t<G_f\le s} m Q'_{b,0}[G_f]=\sum_{t<G_f\le s} m( f-c_\varepsilon V) G_f^{p-1}\\
	&\le \|f\|_1 + c_\varepsilon \sum_{ X} m V^- G_f^{p-1}\le C<\infty 
\end{align*}
by assumption, where $\|f\|_1$ denotes the norm of $f$ in $\ell^1(X,m)$. This gives the upper bound for $h$.	

For the lower bound, let $t_0=\max G_f$. Since $\sum_X m V G_f^{p-1}  \leq 0$, for every $\delta>0$ there is $t_\delta\in (\inf_X G_f, t_0)$ such that $c_\varepsilon\sum_{t< G_f} m V G_f^{p-1} \leq \delta$ for $t< t_\delta$. On the other hand, there is $t_f\in (\inf_X G_f, t_0)$ such that $\sum_{t< G_f} m f \geq \|f\|_1/2$ for $t< t_f$.
So for, $t<\min\{t_{\|f\|_1/4}, t_f\}=:M_f,$ we have again by the Stokes type formula (4.2) in \cite{F} 
\begin{align*}
	h(t)&= h(t_0) +\sum_{t< G_f} mQ'_{b,0}[G_f]
	=h(t_0)+\sum_{t< G_f} m( f-c_\varepsilon V) G_f^{p-1}\geq \|f\|_1/4.
\end{align*}	
This finishes the proof of the lower bound.	
\end{proof}

\begin{proposition}[Coarea estimate]\label{pro_key_g}
Under the assumptions of Lemma~\ref{coareagradG},  we have    the existence of $C>0$	such that for all   $g: (\inf_X u, \sup_X u) \to [0, \infty)$ 
\begin{equation*}
\frac{1}{2}\sum_{x,y\in X} b(x, y) (\nabla_{x,y}G_f)^{\langle p-1\rangle} \int_{G_f(y)}^{G_f(x)} g(t) \, dt\le C \int_{\inf u}^{\sup u} g(t) \ \dt.
\end{equation*}
Furthermore, if in addition, $\sum_X m V G_f^{p-1}  \leq 0$, then there exists $0<M_f< \sup_X G_f$ such that, for all $g: (0,M_f) \to [0, \infty)$, we have
\begin{equation*}
\frac{1}{2}\sum_{x,y\in X} b(x, y) (\nabla_{x,y}G_f)^{\langle p-1\rangle} \int_{G_f(y)}^{G_f(x)} g(t) \, dt\asymp \int_0^{M_f} g(t) \ \dt.
\end{equation*}
\end{proposition}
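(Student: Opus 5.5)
The plan is a discrete Fubini (layer-cake) argument that reduces the double sum to an integral of the level-set flux $h(t)$ from Lemma~\ref{coareagradG}, after which the bounds on $h$ finish the proof. (In the statement, $u$ should be read as $G_f$.) First, by the symmetry of $b$, the summand
\[
b(x,y)(\nabla_{x,y}G_f)^{\langle p-1\rangle}\int_{G_f(y)}^{G_f(x)}g(t)\dt
\]
is invariant under swapping $x$ and $y$ and is nonnegative, since $g\ge0$ and the sign of the integral matches the sign of $\nabla_{x,y}G_f$. Hence the left-hand side equals
\[
\sum_{G_f(y)<G_f(x)} b(x,y)(\nabla_{x,y}G_f)^{p-1}\int_{G_f(y)}^{G_f(x)} g(t)\dt ,
\]
and pairs with $G_f(x)=G_f(y)$ contribute nothing.

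Next, I write $\int_{G_f(y)}^{G_f(x)}g(t)\dt=\int_{\inf G_f}^{\sup G_f} g(t)\,1_{\{G_f(y)<t\le G_f(x)\}}\dt$ and apply Tonelli, which is legitimate because everything is nonnegative. This gives
\[
\int_{\inf G_f}^{\sup G_f} g(t)\,h(t)\dt,
\]
with $h(t)$ exactly the quantity in Lemma~\ref{coareagradG}. The upper bound $h\le C$ for $t\in(\inf G_f,\sup G_f)$ then immediately yields the first estimate.

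For the second claim, assume $\sum_X mVG_f^{p-1}\le0$ and take $g$ supported on $(0,M_f)$, extended by zero. Lemma~\ref{coareagradG} gives $h\asymp1$ on $(0,M_f)$, and the layer-cake identity then yields
\[
\int_0^{M_f} g\,h\dt\asymp\int_0^{M_f} g\dt .
\]
If $\inf G_f>0$, I would use that $G_f\to0$ at infinity on an infinite graph forces $\inf G_f=0$; if $X$ is finite the statement is vacuous or trivial, and I would simply note this.

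The main obstacle is the justification of Tonelli and of measurability in $t$ when $X$ is infinite and not locally finite. For fixed $t>0$, however, only finitely many $x$ satisfy $G_f(x)\ge t$, so $h(t)$ is a countable sum of nonnegative terms, each a step function in $t$. That makes the interchange of sum and integral routine via monotone convergence. Everything else is bookkeeping taken from Lemma~\ref{coareagradG}.
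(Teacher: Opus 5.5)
Your proof is correct and is essentially the paper's own argument. The paper simply invokes the graph coarea formula \cite[Theorem~4.7]{F}, which is exactly your symmetrization-plus-Tonelli identity rewriting the left-hand side as $\int g(t)h(t)\,\mathrm{d}t$, and then applies the bounds on $h$ from Lemma~\ref{coareagradG}. Your side remark about finite $X$ can be made precise: there, summing $Q'_{b,c_\varepsilon V}G_f=f$ against $m$ gives $\sum_X mf=c_\varepsilon\sum_X mVG_f^{p-1}\le 0$, which contradicts $f\gneqq 0$, so the second claim is indeed vacuous in that case.
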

\begin{proof} This follows from the appropriate coarea formula for graphs \cite[Theorem~4.7]{F} and Lemma~\ref{coareagradG}.
\end{proof}

\subsection{Null and positive-critical Hardy weights}

Recall that for $1<p<\infty$,  $0\leq \vge \leq 1$, $\gamma=\gamma_\vge:={({p-1+\vge})/{p}}$, and $c_\vge:={\gamma}^{1-p}$.

\begin{theorem}\label{t:graphs}
	Let $1<p <\infty$. Suppose that $Q_{b,c_\vge V}$ is subcritical in $X$, and let $G_f$ be a Green potential with respect to $Q'_{b,c_\vge V}$ and charge $0\lneqq f\in \ell^1(X,m)$. Then, 
		\begin{equation*} 
		W_{1+\vge} :=\frac{Q'_{b,V}[G_f^\gamma]}{G_f^{\gamma(p-1)}} \geq \frac{f}{c_\vge G_f^{p-1}} \geq 0,
	\end{equation*}
	 is a Hardy weight for $Q_{b,V}$ which is strictly positive if $f>0$. Assume that 
	\begin{equation*}% \label{eq:assum_G_G_V3g}
	\lim_{x\to \bar{\infty}}G_f(x)=0,\qquad \sum_X mV^-G_f^{p-1}<\infty.
	\end{equation*}
	Then, the functional $Q_{b,V-W_{1+\vge}}$ is critical in $X$ with ground state $G_f^\gamma$  for all $0\leq \vge \leq 1$. Moreover, $Q_{b,V-W_{1+\vge}}$ is positive critical with respect to $W_{1+\vge}$  for all $0< \vge \le 1$.
	If in addition   $\sum_X mVG_{f}^{p-1} \dx \leq  0$, and the bounded oscillation assumption $\sup_{x\sim y} G_f(x)/G_f(y)<\infty$ holds, then $Q_{b,V-W_{1}}$ is null-critical
	with respect to $W_{1}$. 
\end{theorem}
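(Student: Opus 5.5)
The plan mirrors the continuum proof of Theorem~\ref{pro_crit}, replacing each ingredient by its discrete counterpart: Lemma~\ref{lem_rep_W1ep}, Lemma~\ref{lem_Fbounded}, Proposition~\ref{Prop:simp_energyg} and Proposition~\ref{pro_key_g}.

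\emph{Hardy weight and lower bound.} By definition, $v:=G_f^\gamma$ is a positive solution of $Q'_{b,V-W_{1+\vge}}[u]=0$. The AAP theorem \cite[Theorem 2.3]{Fischer1} then gives $Q_{b,V-W_{1+\vge}}\ge0$. For the lower bound $W_{1+\vge}\ge f/(c_\vge G_f^{p-1})$ I would use Lemma~\ref{lem_rep_W1ep}. There, $F_\vge\ge0$, and $(\nabla_{xy}G_f)^{\langle p-1\rangle}$ has the same sign as $\int_{G_f(y)}^{G_f(x)}t^{-1+\vge}\dt$, so each summand is nonnegative. This leaves the first term $\gamma^{p-1}f/G_f^{p-1}=f/(c_\vge G_f^{p-1})$. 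Strict positivity for $f>0$ follows immediately.

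\emph{Criticality.} Normalize so that $v\le1$, and set $w_k=\varphi_k(v)$ with the same logarithmic cut-off $\varphi_k$ as before. Since $G_f\to0$ at $\bar\infty$, each $w_k$ is finitely supported, and $w_k(x_0)=1$ for a fixed $x_0$ once $k$ is large. By Proposition~\ref{Prop:simp_energyg} it suffices to show $X_v(w_k)\to0$ and $Y_v(w_k)=O(\log k)$ (with $O(1)$ if $\vge>0$). For $X_v$, I would bound $(v(x)v(y))^{p/2}|\nabla_{xy}w_k|^p$, via H\"older/Jensen, by $\bigl|\int_{v(y)}^{v(x)} |\varphi_k'(s)|\,s\,ds/s\bigr|^p$ times $v$-factors. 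I would then write $\nabla_{xy}v$ through $G_f$ and reduce to a bound of the form $|\nabla_{xy}G_f|^{p-1}\bigl|\int_{G_f(y)}^{G_f(x)} g_k(t)\dt\bigr|$, with $g_k(t)\lesssim (\log k)^{-p}t^{-1+\vge}1_{[k^{-2/\gamma},k^{-1/\gamma}]}$. Proposition~\ref{pro_key_g} then gives the same bounds as in the continuum case.

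\emph{Main obstacle.} The delicate point is this comparison when $G_f(x)/G_f(y)$ is large. It must be done without any bounded-oscillation assumption, so the nonlocal differences have to be controlled by one-dimensional integrals along the level-set variable. The elementary inequality I would try first is $|a^\gamma-b^\gamma|^p\le C|a-b|^{p-1}\bigl|\int_b^a t^{-1+\vge}\dt\bigr|$, which holds for all $a,b>0$ by homogeneity and checking the limits $r\to0,\infty$. It is the discrete replacement of $|\nabla v|^p=\gamma^pG_f^{-1+\vge}|\nabla G_f|^{p-1}|\nabla G_f|$. The same inequality handles $Y_v$ and the cross term. Combining them as in Lemma~\ref{simple_est} yields a null-sequence, and hence criticality with ground state $G_f^\gamma$.

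\emph{Positive and null criticality.} For $\vge>0$, I would bound $W_{1+\vge}v^p$ using Lemma~\ref{lem_rep_W1ep} and $\sup F_\vge<\infty$ (Lemma~\ref{lem_Fbounded}). This gives
\[ \sum_X mW_{1+\vge}v^p\le C\sum_X fG_f^{\vge}m+C\sum_{x,y}b(x,y)(\nabla_{xy}G_f)^{\langle p-1\rangle}\int_{G_f(y)}^{G_f(x)}t^{-1+\vge}\dt, \]
and the right-hand side is finite by Proposition~\ref{pro_key_g}, since $G_f$ is bounded and $f\in\ell^1$. For $\vge=0$ I would use the lower bound on $F_0$ from Lemma~\ref{lem_Fbounded} on the compact set $I=[1/K,K]$, where $K:=\sup_{x\sim y}G_f(x)/G_f(y)$ is finite by the bounded-oscillation assumption. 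This bounds the sum from below by $c\sum b(\nabla G_f)^{\langle p-1\rangle}\int t^{-1}\dt$, which is at least $c\int_0^{M_f}t^{-1}\dt=\infty$ by the two-sided part of Proposition~\ref{pro_key_g}.
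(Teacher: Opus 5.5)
Your outline is the paper's proof: the lower bound on $W_{1+\vge}$ via Lemma~\ref{lem_rep_W1ep}, the logarithmic null-sequence $vw_k$ controlled through Proposition~\ref{Prop:simp_energyg} and the discrete coarea estimate, and positive/null criticality via $\sup F_\vge<\infty$, resp.\ $\inf_{[1/K,K]}F_0>0$, together with the two-sided part of Proposition~\ref{pro_key_g}. The problem is the per-edge reduction you announce for $X_v(w_k)$. You claim $(v(x)v(y))^{p/2}|\nabla_{xy}w_k|^p\lesssim|\nabla_{xy}G_f|^{p-1}\bigl|\int_{G_f(y)}^{G_f(x)}g_k\dt\bigr|$ with $g_k(t)\lesssim(\log k)^{-p}t^{-1+\vge}1_{[k^{-2/\gamma},k^{-1/\gamma}]}(t)$, which would give the continuum rate $(\log k)^{-p}k^{-\vge/\gamma}$. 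This is false for $1<p<2$ and $p-1<\vge\le 1$. Take an edge with $G_f(x)=1$ and $G_f(y)=\tfrac12 k^{-1/\gamma}$. Then $w_k(x)=1$ and $|\nabla_{xy}w_k|=\gamma\log 2/\log k$, so the left side is $\asymp k^{-p/2}(\log k)^{-p}$. The right side is $\asymp k^{-\vge/\gamma}(\log k)^{-p}$, and the ratio is $\asymp k^{(\vge-p+1)/(2\gamma)}\to\infty$. Neither bounded oscillation nor local finiteness is assumed, so such edges are not excluded and your constant cannot be uniform in $k$. For $\vge\le p-1$, in particular for $p\ge 2$, your bound does hold.

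The repair is already in your H\"older/Jensen step, provided you give up the factor $t^{\vge}$. H\"older with respect to $\ds/s$ gives $|\nabla_{xy}w_k|^p\le\log^{p-1}\bigl(v(x)/v(y)\bigr)\int_{v(y)}^{v(x)}s^{p-1}|\varphi_k'(s)|^p\ds$. For $a=G_f(x)\ge b=G_f(y)$, the bounds $ab\le 1$ and $\sqrt{ab}\,\log(a/b)\le a-b$ give $(v(x)v(y))^{p/2}\log^{p-1}\bigl(v(x)/v(y)\bigr)\le\gamma^{p-1}(a-b)^{p-1}$. This yields the weight $g_k(t)=\gamma^{p}(\log k)^{-p}t^{-1}1_{[k^{-2},k^{-1}]}(t^\gamma)$ and hence $X_v(w_k)\le C(\log k)^{1-p}$ for every $\vge\in[0,1]$. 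This is exactly the paper's bound on $a_k$ in Lemma~\ref{lem:a_bounded}, and it suffices.

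You need this rate, not merely $X_v(w_k)\to 0$. For $p>2$ and $\vge=0$ it gives $X^{2/p}Y^{1-2/p}=O((\log k)^{-1})$, whereas $X\to0$ and $Y=O(\log k)$ alone would not.

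Your inequality $|a^\gamma-b^\gamma|^p\le C|a-b|^{p-1}\bigl|\int_b^a t^{-1+\vge}\dt\bigr|$ is correct and handles $Y_v$ for $\vge>0$, giving $O(1)$. For $\vge=0$, however, it produces the untruncated $\int_b^a t^{-1}\dt$, which can sum to $+\infty$ over edges with $G_f(y)<k^{-2/\gamma}\le G_f(x)$. On those edges you must use $w_k(y)=0$, $w_k(x)\le\min\{1,\gamma\log(k^{2/\gamma}G_f(x))/\log k\}$ and $|a^\gamma-b^\gamma|^p\le C(a-b)^{p-1}$. These let you compare with $(a-b)^{p-1}\int_{k^{-2/\gamma}}^{a}t^{-1}\dt$, which is the second case of the paper's bound on $\tilde a_k$.
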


\begin{remark}
	A corresponding  statement for $\varepsilon=0$ under slightly different assumptions is found in \cite{F}. Although our proof of criticality is similar in spirit, in contrast it holds for all $\varepsilon \in [0,1]$ uses different  estimates. Specifically, we do not need the bounded oscillation assumption for criticality and positive criticality. Furthermore, the proof of positive and null-criticality does use the explicit representation of $W_1$ which did not appear so far in the discrete setting.
\end{remark}

\begin{proof} 
	The estimate for $W_{1+\vge}$ and the corresponding positivity follows from Lemma~\ref{lem_rep_W1ep}.

	For $0\leq \vge \leq 1$, and $\gamma=\gamma_\varepsilon={\frac{p-1+\vge}{p}}$, let $v := v_\vge = (G_f )^\gamma$.  Then $v> 0$ satisfies $Q'_{b,V-W_{1+\vge}}v =0$ in $X$, i.e., $Q'_{b,V-W_{1+\vge}} \geq 0$ in $X$.  Without loss of generality, we may assume that $0<v\leq 1$ in $X$.

	{\em Criticality:}  As in the continuum case, we construct a null-sequence for the functional $Q_{b,V-W_{1+\vge}}$.
	Let
	\begin{align} \label{wk}
	\varphi_{k}(t):=
	\begin{cases}
	0 & 0\leq t\leq \frac{1}{k^{2}},\\
	2+\frac{\log t}{\log k} & \frac{1}{k^{2}}\leq t\leq \frac{1}{k},\\
	1 & \frac{1}{k}\leq t,
	\end{cases}
	\qquad \mbox{and \ set } \, w_k:=\varphi_k(v).
	\end{align}
Criticality of $Q_{b,V-W_{1+\vge}}$ follows from the fact that the sequence $(vw_k)_{k\in\mathbb{N}}$  is a null-sequence,  which we show below.\smallskip
 
 Since $\lim_{x\to \bar{\infty}}G_{f}=0$, we have that $w_k$ and, therefore, also $vw_k$ are compactly (i.e., finitely) supported in $X$ for all $k\in \N$. 
%Fix a closed ball $B$ in $\Gw$.
By {Lemma \ref{Prop:simp_energyg}} we have, for  $0\leq w\in C_c(X)$,
$$Q_{b,V-W_{1+\vge}} (vw)\asymp Q^v_{\mathrm{sim}}(w),$$
where $Q^v_{\mathrm{sim}}$ is the simplified energy for the functional $Q_{b,V-W_{1+\vge}}$, associated to $v$ {(see Definition~\ref{Def:SEF} and Lemma \ref{Prop:simp_energyg}).}
Note that $w_k(x)=1$ at $x$ where $G_f$ takes its maximum. Thus, we only need to   prove that  
\begin{equation*}%\label{eq_null}
\lim_{k\to\infty}{Q^v_{\mathrm{sim}}(w_k)}=0 .
\end{equation*} 
	
We will use  the upper bound of the simplified energy, Proposition~\ref{Prop:simp_energyg}, and recall the quantities $X_v(w_k)$ and $Y_v(w_k)$ defined therein.
% it is sufficient to prove that  $\lim_{k\to \infty}X_v(w_k)=0$, and, in the case $p>2$, also that $\sup_{k\in \N }Y_v(w_k)<\infty$, where   
%\begin{align*} X_v(w_k)&:=\sum_{x,y\in X} b(x,y)\left({v}(x){v}(y)\right)^{\frac{p}{2}} |\nabla_{xy} w_k|^p,\\ Y_v(w_k)&:=\sum_{x,y\in X} b(x,y) \left|\nabla_{xy}{v} \right|^p \left(\frac{ |w_k(x)|+|w_k(y)| }{2}\right)^p.\end{align*}}
We rewrite $X_v(w_k)$ as
\begin{align*}
X_v(w_k) &=\sum_{x,y\in X} b(x,y)(\nabla_{x,y}G_f)^{\langle p-1\rangle} a_k(x,y)\left( \int_{G_f(y)}^{G_f(x)} t^{-1 +\gamma p  } |\varphi'_k(t^{\gamma})|^p\dt\right),
\end{align*} 
where 
\begin{align*}
a_k(x,y):= \frac{\left({v}(x){v}(y)\right)^{\frac{p}{2}} |\nabla_{xy} w_k|^p}
{(\nabla_{x,y}G_f)^{\langle p-1\rangle}\left( \int_{G_f(y)}^{G_f(x)} t^{-1 +\gamma p   }|\varphi'_k(t^{\gamma})|^p \dt\right)}
\end{align*}
whenever the denominator does not vanish and $a_k(x, y) = 0$, otherwise.
In addition, for $p\geq 2$, we rewrite $	Y_v(w_k)$ as
\begin{align*}
	Y_v(w_k)&=	 \sum_{x,y\in X}b(x,y)(\nabla_{x,y}G_f)^{\langle p-1\rangle} \tilde{a}_k(x,y)\left( \int_{G_f(y)}^{G_f	(x)} t^{-1  }1_{ [1/k^{2},1]}(t^\gamma) \dt\right),
\end{align*} 
where 
\begin{align*}
\tilde{a}_k(x,y):= \frac{\left|\nabla_{x,y}v \right|^p \left(\frac{ |w_k(x)|+|w_k(y)|}{2}\right)^p}
{(\nabla_{x,y}G_f)^{\langle p-1\rangle}\left( \int_{G_f(y)}^{G_f(x)} t^{-1  }1_{ [1/k^{2},1]}(t^\gamma) \dt\right)}
\end{align*}
whenever the denominator does not vanish and $\tilde{a}_k(x, y) = 0$, otherwise.

It follows from Lemma~\ref{lem:a_bounded} below that ${a}_k$ and $\tilde{a}_k$ are bounded from above. 
Using the first  coarea estimate, Proposition~\ref{pro_key_g}, with  $g(t)=t^{\gamma p -1}|\varphi_k'(t^\gamma)|^p \leq t^{-1}(\log k)^{-p} $, we have, since we assumed $0<v\leq 1$,
%and Lemma~\ref{coarea grad G}, we obtain
\begin{align*}
X_v(w_k)\leq 
C\int_0^1 t^{-1 +\gamma p} |\varphi'_k(t^{\gamma})|^p\dt\leq
\dfrac{C}{\left(\log k\right)^{p}}  \int_{\frac{1}{k^{2/\gamma}}}^{\frac{1}{k^{1/\gamma}}}t^{-1}\dt \leq  \dfrac{C \gamma^{-1}}{\left(\log k\right)^{p-1}}.
 \end{align*}
 Also, with $g(t)=  t^{-1} 1_{[1/k^2,1]} (t^{\gamma})$, the first  coarea estimate, Proposition~\ref{pro_key_g}, gives
\begin{align*}
Y_v(w_k)\leq C \int_{0}^{\sup G_f} t^{-1  }1_{ [1/k^{2},1]}(t^\gamma) \dt   \leq  C \int_{\frac{1}{k^{2/\gamma}}}^1   {t}^{-1}{\mathrm{d}t} 
 %\leq 
 %C\int_{\frac{1}{k^2}}^{1}{t}^{-\frac{(p-1)(1-\vge)}{p-1+\vge}}{\mathrm{d}t}
  \leq  
	C \gamma^{-1}\log k .
\end{align*} 
By Proposition~\ref{Prop:simp_energyg}, the simplified energy can be estimated from above by
$$Q^v_{\mathrm{sim}}(w_k)\leq C\left\{
  		\begin{array}{ll}
  			X_v(w_k) & \mbox{ if } 1<p\leq 2,\\\\
			X_v(w_k)+\ {X_v(w_k)}^{2/p} Y_v(w_k)^{1-2/p} & \mbox{ if } p>2.
		\end{array}
\right.$$
Therefore, $\lim_{k\to\infty}Q^v_{\mathrm{sim}}(w_k) =0$.   Thus, $(vw_k) $ is a null-sequence for the functional $Q_{b,V-W_\vge}$. Hence, $Q_{b,V-W_\vge}$ is critical in $X$ for any $0\leq \vge \leq 1$ by \cite[Theorem~5.1]{Fischer1}.

{\em Positive-criticality for $W_{1+\vge}$ for $0<\vge\leq1$:} Recall that $v:=G_f^{\gamma}$ is the Agmon ground state of the operator $Q'_{b,V-W_{1+\vge}}$. We apply the formula for $W_{1+\varepsilon}$ from Lemma~\ref{lem_rep_W1ep} and use the fact that $F_\varepsilon $ is bounded by Lemma~\ref{lem_Fbounded}. This together with the first coarea estimate, {Proposition~\ref{pro_key_g}}, applied with     $g(t)=t^{-1+\vge}$ implies 
\begin{align*}
\sum_X m &W_{1+\varepsilon} G_f^{\gamma p} \\ 
 \leq & \gamma^{p-1} \sum_X m f G_f^{\varepsilon} + C \sum_{x,y\in X} b(x,y)  (\nabla_{xy} G_f)^{\langle p-1 \rangle}
{\int_{G_f(y)}^{G_f(x)}  t^{-1+\varepsilon}{\dt}}\\  
\leq &  {\gamma^{p-1} \sum_X m f G_f^{\varepsilon} +  C \int_0^1 t^{-1+\varepsilon}{\dt}<\infty},
\end{align*}
where the boundedness of the first term follows from the assumption $f\in\ell^1(X,m)$ and boundedness of $G_f^\varepsilon$ due to its convergence to zero.

{\em Null-criticality for $W_{1}$:} Recall that $v:=G_f^{\frac{p-1}{p}}$ is the Agmon ground state of the operator $Q'_{b,V-W_{1}}$.  We again apply the formula for $W_{1+\varepsilon}$ from Lemma~\ref{lem_rep_W1ep} and use the fact that $F_\varepsilon $ is bounded from below if $G_f$ satisfies the bounded oscillation condition by Lemma~\ref{lem_Fbounded}. This together with the first coarea estimate, {Proposition~\ref{pro_key_g}}, applied with     $g(t)=t^{-1 }$ implies 
\begin{align*}
\sum_X m W_{1 } G_f^{\gamma p} 
%=& \gamma^{p-1} \sum_X m f G_f^{\varepsilon} + \sum_{x\in X}  \sum_{y \sim x} b(x,y) (\nabla_{xy} G_f)^{\langle p-1 \rangle} {F}_\varepsilon(G_f(x), G_f(y))\\
 \geq & \gamma^{p-1} \sum_X m f   + C \sum_{x,y\in X}   b(x,y)  (\nabla_{xy} G_f)^{\langle p-1 \rangle}
{\int_{G_f(y)}^{G_f(x)}  t^{-1 }{\dt}}\\  
\geq &  {\gamma^{p-1} \sum_X m f   +  C \int_0^1 t^{-1 }{\dt}=\infty},
\end{align*}
which shows null-criticality and, therefore, finishes the proof.
\end{proof}
We still owe the proof for the boundedness of the functions $a_k$ and $\tilde{a}_k$.

\begin{lemma}\label{lem:a_bounded}
	Let $1<p<\infty$, and $\varepsilon \in [0,1]$. Then the functions $a_k$ and $\tilde{a}_k$ defined in the proof of Theorem~\ref{t:graphs} are bounded from above by a constant independent of $k$.
\end{lemma}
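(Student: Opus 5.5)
The plan is to fix an edge $x\sim y$ with $A:=G_f(x)>B:=G_f(y)$. Edges with $A=B$ contribute $a_k=\tilde a_k=0$, and the case $A<B$ is symmetric. Put $r:=B/A\in(0,1)$, $a:=A^\gamma=v(x)$ and $b:=B^\gamma=v(y)$. Since $G_f$ is bounded (it tends to $0$ at infinity) and we normalized $v\le 1$, the factor $A^{\gamma p-(p-1)}=A^{\vge}$ that appears below is bounded by a constant independent of $k$, because $\vge\ge0$. In both denominators I will substitute $s=t^\gamma$, so that $\mathrm{d}t/t=\gamma^{-1}\,\mathrm{d}s/s$ and $t^{\gamma p-1}\,\mathrm{d}t=\gamma^{-1}s^{p-1}\,\mathrm{d}s$. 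After this substitution each quotient splits into a power of $A$ times a function of $r$ alone, and that function of $r$ has to be bounded on $(0,1)$ uniformly in $k$.

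\emph{Bound for $a_k$.} By H\"older's inequality,
$$|\varphi_k(a)-\varphi_k(b)|^p=\Big|\int_b^a\varphi_k'(s)\,\mathrm{d}s\Big|^p\le\Big(\int_b^a s^{p-1}|\varphi_k'(s)|^p\,\mathrm{d}s\Big)\Big(\log\frac ab\Big)^{p-1}.$$
The first factor is exactly $\gamma$ times the integral in the denominator of $a_k$. Hence
$$a_k(x,y)\le\gamma\,\frac{(ab)^{p/2}(\log(a/b))^{p-1}}{(A-B)^{p-1}}=\gamma^{p}A^{\vge}\,\frac{r^{\gamma p/2}\,(\log(1/r))^{p-1}}{(1-r)^{p-1}}.$$
The function of $r$ on the right is bounded on $(0,1)$. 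As $r\to1$ we have $\log(1/r)/(1-r)\to1$, and as $r\to0$ the factor $r^{\gamma p/2}$ kills the logarithm. In this bound $k$ has disappeared entirely, and no bounded oscillation assumption is used.

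\emph{Bound for $\tilde a_k$.} Since $w_k=\varphi_k(v)$ is nondecreasing in $v$, the numerator vanishes unless $a>k^{-2}$. After the substitution, the denominator equals $\gamma^{-1}(A-B)^{p-1}\log\big(\min(a,1)/\max(b,k^{-2})\big)$. I split into two cases.

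First, if $b\ge k^{-2}$, I bound the factor involving $w_k$ by $1$. What remains is
$$\gamma\,\frac{(a-b)^p}{(A-B)^{p-1}\log(a/b)}=A^{\vge}\,\frac{(1-r^\gamma)^p}{(1-r)^{p-1}\log(1/r)}.$$
This is bounded near $r=1$, where it behaves like $\gamma^p(1-r)$, and near $r=0$, where it tends to $0$.

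Second, if $b<k^{-2}<a$, then $w_k(y)=0$ and $w_k(x)=\varphi_k(a)\le\min\{1,\log(ak^2)/\log k\}$. For this case I will use
$$\frac{\varphi_k(a)^p}{\log(ak^2)}\le\frac{(\log(ak^2))^{p-1}}{(\log k)^p}\le\Big(\log\frac ab\Big)^{p-1},$$
valid for $k\ge3$, say, together with $\log(ak^2)\le\log(a/b)$. This reduces the estimate to $(a-b)^p(\log(a/b))^{p-1}/(A-B)^{p-1}$, which has the same form as the bound for $a_k$ and is handled identically. If $a>1$ the upper limit is cut at $1$; I would either absorb this into the normalization $v\le1$ or treat it in the same way.

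I expect the main obstacle to be this second, mixed case for $\tilde a_k$. There the integration domain in the denominator is truncated at $k^{-2}$ and can be very short even when $A-B$ is not small. The trick is to spend the smallness of $w_k(x)$, namely $\varphi_k(a)\lesssim\log(ak^2)/\log k$, to compensate exactly for that short logarithmic length. Apart from this, everything reduces to checking that explicit one-variable functions of $r\in(0,1)$ stay bounded at the endpoints $r\to0$ and $r\to1$.
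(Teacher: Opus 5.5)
Your H\"older argument for $a_k$ is correct. It is a tidy substitute for the paper's explicit evaluation of $\int t^{\gamma p-1}|\varphi_k'(t^\gamma)|^p\,\mathrm{d}t$. Your first case for $\tilde a_k$ is also fine: near $r=1$ that expression tends to $A^{\varepsilon}\gamma^p$ rather than behaving like $\gamma^p(1-r)$, but it is bounded either way.

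The gap is the mixed case $b<k^{-2}<a$. The second inequality in your chain, $(\log(ak^2))^{p-1}(\log k)^{-p}\le(\log(a/b))^{p-1}$, throws away exactly the gain in $k$ that you say must compensate the short integration range. What you reduce to is
\[
\gamma\,\frac{(a-b)^p\,(\log(a/b))^{p-1}}{(A-B)^{p-1}}=\gamma^{p}A^{\varepsilon}\,\frac{(1-r^\gamma)^p\,(\log(1/r))^{p-1}}{(1-r)^{p-1}} .
\]
This is \emph{not} of the same form as your $a_k$ bound. There, the numerator $(ab)^{p/2}$ produced the factor $r^{\gamma p/2}$ that kills the logarithm as $r\to0$; here, $(a-b)^p$ only produces $(1-r^\gamma)^p\to1$. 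So your bound grows like $(\log(1/r))^{p-1}$ as $r\to0$, i.e.\ it is unbounded on the admissible region $\{0<b<k^{-2}<a\le1\}$ for every $k$. No bounded oscillation assumption $\sup_{x\sim y}G_f(x)/G_f(y)<\infty$ is available here (the paper deliberately avoids it for criticality), so nothing prevents edges from realizing this regime. As written, you therefore do not get a constant independent of $k$ and of the edge, which is what is needed to pull $\sup\tilde a_k$ out of $Y_v(w_k)$.

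The repair is to keep the $k$-gain instead of trading it for $\log(a/b)$. Since $a\le1$ by the normalization $v\le1$, you have $0<\log(ak^2)\le2\log k$, hence
\[
\frac{\varphi_k(a)^p}{\log(ak^2)}\le\frac{(\log(ak^2))^{p-1}}{(\log k)^p}\le\frac{2^{p-1}}{\log k} .
\]
The paper instead uses $\varphi_k(a)^p\le\varphi_k(a)\le\log(ak^2)/\log k$, which gives $1/\log k$. Either way, no logarithm of $a/b$ survives, and
\[
\tilde a_k\le C\,\frac{(a-b)^p}{(A-B)^{p-1}}=C\,A^{\varepsilon}\,\frac{(1-r^\gamma)^p}{(1-r)^{p-1}} .
\]
The $r$-dependent factor tends to $1$ as $r\to0$ and to $0$ as $r\to1$, so it is bounded on $(0,1)$. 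This is precisely the paper's treatment of the case where the lower integration limit is truncated at $k^{-2}$.
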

\begin{proof} We  let $\gamma  = \frac{p-1+\varepsilon}{p}$  as in the proof of Theorem~\ref{t:graphs}. We set $a=G_f(x)$ and $b=G_f(y)$ for brevity. We assume without loss of generality that $a\geq b$ and also $a^\gamma\geq 1/k^2$ and $b^\gamma\le 1/k$ since $a_k(x,y)=0$ otherwise. We also  set
\begin{align*}
		A=\min\{1/k,a^{\gamma}\},\qquad B=\max\{1/k^2,b^{\gamma}\}.
\end{align*}
With this notation, we have $w_k(x)= 2+\log A/\log k$ and $w_k(y)= 2+\log B/\log k$, where $w_k$ is given by \eqref{wk}. In particular, we have 
%with $C_{p,\varepsilon} = (\gamma_\varepsilon/\gamma_0)^{p}$ that
\begin{align*}
	|\nabla_{xy} w_k|^p= \frac{\log^p A/B}{\log^p k}.
\end{align*}
By a change of variable of $\tau=t^{\gamma}$  and the observation that $t\varphi_k'(t)=1_{[1/k^2,1/k]} /\log k$, we have
\begin{align*}
 \int_{G_f(y)}^{G_f(x)} t^{-1+\gamma p }|\varphi_k'(t^{\gamma})|^p \dt&={C} \int_{b	^\gamma}^{a^{\gamma}} \tau^{p-1}|\varphi_k'(\tau)|^p \mathrm{d}\tau= \frac{C}{\log^p k}\log A/B \,.
 \end{align*}
Since $\log A/B\leq \log a^\gamma/b^\gamma\leq  \log a/b$ as $ \gamma\le 1 $, and $ab\leq 1$, we have with $t=a/b$ 
\begin{multline*}
	a_k(x,y)  = \frac{\left({v}(x){v}(y)\right)^{\frac{p}{2}} |\nabla_{xy} w_k|^{p}}
{(\nabla_{x,y}G_f)^{\langle p-1\rangle}\left( \int_{G_f(y)}^{G_f(x)} t^{-1  +\gamma p }|\varphi_k'(t^{\gamma}	)|^p \dt\right)} \\[2mm]
= \frac{(ab)^{\frac{p-1+\varepsilon}{2}} \log^{p-1} A/B}{(a-b)^{ p-1} } \leq
\frac{(ab)^{\frac{p-1}{2}} \log^{p-1} a/b}{(a-b)^{ p-1} }
=t^{\frac{p-1}{2}} \left(\frac{\log t}{t-1}\right)^{ p-1} \!\!=:f(t).
	\end{multline*}
 The function $f$ is continuous on $(1,\infty)\setminus\{1\}$ and has finite limits as $t\to 1$ and $\infty$. Hence, it is bounded from above by a constant independent of $k$. 
 
The proof for $\tilde{a}_k$ is similar, and we use the notation above. We assume without loss of generality that $a\geq b$ and   $a^\gamma\geq 1/k^2$  since $\tilde a_k(x,y)=0$ otherwise. 
We compute
\begin{align*}
	 \int_{b}^{a} t^{-1  }1_{ [1/k^{2 },1]}(t^\gamma) \dt=\frac{1}\gamma\int_{b^\gamma}^{a^\gamma} \tau^{-1  }1_{ [1/k^{2 },1]}(\tau)d\tau =\frac{1}{\gamma}\log a^\gamma/B.
\end{align*}
 If $B=b^\gamma$, then estimating $w_k\leq 1$ and letting $t=a/b$, we get
\begin{multline*}
\tilde a_k(x,y) =\frac{\left|\nabla_{x,y}v \right|^p \left(\frac{ |w_k(x)|+|w_k(y)|}{2}\right)^p}
{(\nabla_{x,y}G_f)^{\langle p-1\rangle} \left( \int_{G_f(y)}^{G_f(x)} t^{-1  }1_{ [1/k^{2 },1]}(t^\gamma) \dt\right) }\\[2mm]
\leq C  \frac{(a^\gamma - b^\gamma)^p}{(a - b)^{p-1} \log(a/b)}
=C b^{\gamma p - p + 1}  \frac{(t^\gamma - 1)^p}{(t - 1)^{p-1} \log t}\\[2mm]
\leq C t^{-(\gamma p - p + 1)}  \frac{(t^\gamma - 1)^p}{(t - 1)^{p-1} \log t}=:g(t),
\end{multline*}
since $b=a/t\le 1/t$ and $ \gamma p-p+1\geq 0 $. 
The function on the right-hand side is continuous in $(1,\infty)$ and has finite limits as $t\to 1$ and $\infty$. Hence, it is bounded from above by a constant independent of $k$.
For $B=1/k^2$, we have
\begin{align*}
	\frac{  \left(\frac{ |w_k(x)|+|w_k(y)|}{2}\right)^p}
{  \int_{b}^{a} t^{-1  }1_{ [1/k^{2 },1]}(t^\gamma) \dt}\leq \gamma \frac{  |w_k(x)|  }{\log (k^2a^\gamma)} 
= \gamma\frac{\log  (k^2 a^\gamma)}{(\log  k)\log (k^2a^\gamma)} 
=\frac{\gamma}{\log k}\leq C \,.
\end{align*}
So

for $t=a/b$ we get 
\begin{align*}
\tilde a_k(x,y)  
\leq C  \frac{(a^\gamma - b^\gamma)^p}{(a - b)^{p-1}  }
\leq C t^{-(\gamma p - p + 1)}  \frac{(t^\gamma - 1)^p}{(t - 1)^{p-1} }=:h(t),
\end{align*}
which is again continuous and bounded on $(1,\infty)$. 
\end{proof}

If $V=0$, we can expand our considerations to the case $1-p< \vge\leq 0$.
\begin{theorem}\label{thm_v0d}	
	Let $1<p<\infty$ and $1-p< \vge \leq 1$.	Suppose that $Q_{b,0}$ is subcritical in $X$, and let   $G_f$  be a Green potential of $Q_{b,0}'$ for $0\lneqq f\in \ell^1(X,m)$. Then, $	W_{1+\vge} :={Q'_{b,0}[G_f^\gamma]}/{G_f^{\gamma(p-1)}}$ is a Hardy weight which is strictly positive if $f>0$. Assume further $$\lim\limits _{x\to \bar{\infty}}G_{f}=0.$$
Then, the functional $Q_{b,-W_{1+\vge}}$ is
\begin{enumerate}
	\item[(a)]   subcritical in $\Gw$ for all $1-p< \vge <0$,
	\item[(b)]    critical in $\Gw$ with ground state $G_f^\gamma$  for all $0\leq \vge \leq 1$,
	\item [(c)]  positive critical with respect to $W_{1+\vge}$  for all $0< \vge \leq 1$,
	\item[(d)]  null-critical with respect to $W_{1}$ for $\vge=0$, if $G_f$ satisfies the bounded oscillation assumption. {In particular, if $f>0$, then $W_{1}$ is a strictly positive optimal Hardy weight for $Q_{A,0}$.}
\end{enumerate} 
\end{theorem}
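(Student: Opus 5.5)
The plan is to mirror the proof of Theorem~\ref{thm_v0} in the continuum, reducing everything except part (a) to Theorem~\ref{t:graphs} with $V=0$. First, $W_{1+\vge}$ is a Hardy weight for every $1-p<\vge\le 1$. Here $\gamma=(p-1+\vge)/p\in(0,1]$, and since $V=0$ we have $c_\vge V=0$, so $G_f$ is simply the Green potential of $Q'_{b,0}$, whatever the value of $\vge$. Lemma~\ref{lem_rep_W1ep} is derived only from the equation $Q'_{b,0}G_f=f$ and the identity $\gamma p-(p-1)=\vge$; neither uses $\vge\ge 0$. It therefore gives
$W_{1+\vge}=\gamma^{p-1}f/G_f^{p-1}+\frac{1}{mG_f^{\gamma p}}\sum_y b(x,y)F_\vge(G_f(x)/G_f(y))(\nabla_{xy}G_f)^{\langle p-1\rangle}\int_{G_f(y)}^{G_f(x)}t^{-1+\vge}\dt$.
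The positivity argument of Lemma~\ref{lem_Fbounded} goes through for $\gamma\in(0,1)$, since it uses only the concavity of $t\mapsto t^\gamma$. Thus each summand is nonnegative, which gives $W_{1+\vge}\ge \gamma^{p-1}f/G_f^{p-1}$, and this is strictly positive wherever $f>0$. Moreover $G_f^\gamma>0$ solves $Q'_{b,-W_{1+\vge}}u=0$, so the discrete AAP theorem \cite[Theorem 2.3]{Fischer1} shows that $W_{1+\vge}$ is a Hardy weight.

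Parts (b), (c) and (d) follow from Theorem~\ref{t:graphs} with $V=0$. The hypothesis $\sum_X mV^-G_f^{p-1}<\infty$ holds trivially, and so does $\sum_X mVG_f^{p-1}\le 0$. The remaining hypotheses there are subcriticality, $f\in\ell^1$, $G_f\to0$ at $\bar\infty$, and bounded oscillation for (d); these are exactly what is assumed here. The theorem then gives criticality with ground state $G_f^\gamma$ for $0\le\vge\le1$, positive criticality for $0<\vge\le1$, and null-criticality at $\vge=0$. Together with the strict positivity from the first step, this makes $W_1$ a strictly positive optimal Hardy weight.

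For (a), I would show that $\vge\mapsto W_{1+\vge}(x)$ is increasing on $(1-p,0]$ and strictly increasing at some point. Then $0\le W_{1+\vge}\lneqq W_1$ for $\vge<0$, so $W_1-W_{1+\vge}\gneqq0$ is a Hardy weight for $Q_{b,-W_{1+\vge}}$, because $Q_{b,-W_1}\ge0$. This gives subcriticality.

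The monotonicity is the main obstacle, since on graphs there is no chain rule formula. I would write
$W_{1+\vge}(x)=\frac{1}{m(x)}\sum_y b(x,y)\,\big(1-r^\gamma\big)^{\langle p-1\rangle}$ with $r=G_f(y)/G_f(x)$.
It then suffices to show that, for fixed $r>0$, the map $\gamma\mapsto (1-r^\gamma)^{\langle p-1\rangle}$ is monotone in the right direction. This looks wrong at first: for $r<1$ the expression increases with $\gamma$, but for $r>1$ it decreases with $\gamma$, since $1-r^\gamma$ is negative and grows in absolute value. So I would instead compare the normalized quantities $W_{1+\vge}=Q'_{b,0}[G^\gamma]/G^{\gamma(p-1)}$ edge by edge. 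Writing $(1-r^\gamma)^{\langle p-1\rangle}=h_\gamma(r)$, the term for $r>1$ is negative, and a direct pointwise comparison may fail.

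The fallback is to avoid pointwise monotonicity altogether and argue by uniqueness of the ground state. If $Q_{b,-W_{1+\vge}}$ were critical for some $\vge<0$, its unique positive supersolution would be $G_f^\gamma$. But $G_f^{\gamma_0}$ with $\gamma_0=(p-1)/p$ is also a positive supersolution of this equation whenever $W_1\ge W_{1+\vge}$ holds, so the comparison between $W_1$ and $W_{1+\vge}$ still has to be established. For that I would use the sum over $y$ together with $\sum_y b(x,y)(\nabla_{xy}G_f)^{p-1}=m(x)f(x)\ge0$.
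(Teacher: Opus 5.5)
Your treatment of the Hardy-weight property and of (b)--(d) is correct and follows the paper's route. With $V=0$ the rescaling $c_\vge V$ plays no role, so Theorem~\ref{t:graphs} applies verbatim. The concavity (secant-slope) argument behind Lemma~\ref{lem_Fbounded} also works for every $\gamma\in(0,1]$.

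The gap is in (a). Both your main route and your fallback need the pointwise comparison $W_{1+\vge}\leq W_1$ for $\vge<0$, and you never prove it. The paper asserts it in one line, claiming that $\vge\mapsto W_{1+\vge}$ is increasing on $(1-p,0]$. That claim is carried over from the continuum, where it follows from $W_{1+\vge}=\gamma^{p-1}\big[(p-1)(1-\gamma)|\nabla G_f|_A^p/G_f^p+f/G_f^{p-1}\big]$ and the monotonicity of $\gamma\mapsto\gamma^{p-1}(1-\gamma)$ on $(0,(p-1)/p]$. On graphs your suspicion is justified: the comparison is false. The sum rule $\sum_y b(x,y)(\nabla_{xy}G_f)^{\langle p-1\rangle}=m(x)f(x)$ cannot repair it, because it controls only one linear combination of the edge terms. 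For example, take $p=2$ and let $x$ have exactly two neighbours, with $b(x,y_1)=1$, $G_f(y_1)=2G_f(x)$, and $b(x,y_2)=(1-\delta)^{-1}$, $G_f(y_2)=\delta G_f(x)$. Then $f(x)=0$ and
\[
m(x)W_{1+\vge}(x)=\frac{1-\delta^\gamma}{1-\delta}+1-2^\gamma\to 2-2^\gamma \quad (\delta\to0),
\]
which is strictly decreasing in $\gamma$. Hence $W_{1+\vge}(x)>W_1(x)$ for any fixed $\vge<0$ once $\delta$ is small.

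In fact (a) is false as stated, so no argument can close this gap. Take $p=2$, $X=\mathbb{N}_0$, $m\equiv1$, $G_k:=e^{-2^k}$ and $b(k,k+1):=(G_k-G_{k+1})^{-1}$.

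1. Then $Q'_{b,0}G=1_{\{0\}}=:f$ and $G$ is the minimal positive solution. Since $G_k\to0$, all hypotheses of the theorem hold.
2. The ground state transform gives $Q_{b,-W_{1+\vge}}(G^\gamma w)=\sum_k c_k\,(w(k)-w(k+1))^2$ with $c_k=(G_kG_{k+1})^\gamma/(G_k-G_{k+1})$.
3. Since $G_{k+1}=G_k^2$, we get $1/c_k=G_k^{1-3\gamma}(1-G_k)$. Hence $R_n:=\sum_{k<n}1/c_k\to\infty$ whenever $\gamma\geq 1/3$.
4. Set $w_n(k):=(1-R_k/R_n)_+$. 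Then $w_n(0)=1$ and its energy is $1/R_n\to0$.
5. So $(G^\gamma w_n)$ is a null-sequence, and $Q_{b,-W_{1+\vge}}$ is critical for every $\vge\in[-1/3,0)$.

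This $G$ violates bounded oscillation, since $G_k/G_{k+1}=e^{2^k}$. A correct discrete version of (a) needs an extra hypothesis of this kind, plus an argument other than pointwise comparison. For $p=2$ one can argue as follows. Under bounded oscillation and $\gamma<1/2$, the coarea estimate of Proposition~\ref{pro_key_g} shows that the current $b(x,y)\nabla_{xy}G_f$ has finite energy in the network $b(x,y)G_f^\gamma(x)G_f^\gamma(y)$. Cauchy--Schwarz then bounds $\big(\sum_X mfw\big)^2$ by a constant times that network's energy of $w$. This gives transience, i.e.\ subcriticality.
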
  
\begin{proof}
	Note that $W_{1+\vge}$ is an increasing function of $\vge$ in $(1-p,0]$, in particular, for $\vge\in (1-p,0)$, 
	$W_{1+\vge}\lneqq W_1$, hence, for such $\vge$, $Q_{b,-W_{1+\vge}}$ is subcritical. The other claims of the theorem follow from Theorem~\ref{t:graphs}.    
\end{proof}

% \section{Rellich inequality and an open problem in the linear case}\label{sec_lin}
\section{The linear case: applications \& open questions}\label{sec_lin}

{In the present section we consider the linear case, where $p=2$, in the continuum setting. First we extend the results of Section~\ref{sec_constr} without the extra assumption that $Q_{A,c_\varepsilon V}$ is subcritical. Then we discuss Rellich-type inequalities and conclude with open questions.}

\subsection{Construction of Hardy weights for subcritical $Q_{A,V}$}
As we have seen in the purely quasilinear case (i.e., $p \neq 2$), in order to construct strictly positive optimal Hardy weights for $Q_{A,V}$, we require the subcriticality of $Q_{A,c_\varepsilon V}$, with $c_\varepsilon>1$, rather than that of $Q_{A,V}$.
In the linear case (i.e., $p=2$), we can overcome this gap using the ground state transform.

Let the energy functional $Q=Q_{A,V}$ corresponding to the operator $Q'=Q_{A,V}'$ be subcritical {with positive minimal Green function $G$}. By the AAP theorem, there exists a positive solution $h\in W^{1,2}_\loc(\Gw)$ of the equation $Q_{A,V}'[h]=0$ in $\Gw$.
Let us denote the ground state transform of $Q$ by $Q_{\mathrm{sim}}^h$ and $Q'$ by ${Q'}_{\mathrm{sim}}^h$, i.e., for all $w \in W^{1,2}(\Gw) \cap C_c(\Gw)$,
    \[Q(hw)= Q_{\mathrm{sim}}^h(w)=\int_{\Gw} h^2 |\nabla w|_A^2 \dx \,, \]
    by Lemma \ref{simpl_energy}, and \[ {Q'}_{\mathrm{sim}}^h(w)= \frac{1}{h} Q'(hw)= -\frac{1}{h^2}\mathrm{div}(h^2A \nabla w) \,,\]
    where the last equality holds as $Q'$ is symmetric, see \cite[(4.10), (4.11)]{DFP}. Let \[\tilde{Q}'[w]=-\mathrm{div}(h^2A \nabla w) \,.\]
The energy functional corresponding to $\tilde{Q}'$ is
    \[\tilde{Q}[w]= \int_{\Gw} h^2 |\nabla w|_A^2 \dx \,.\]
Since $Q'$ is subcritical, it follows that $\tilde{Q}'$ is also subcritical with Green function $\tilde{G}(x,y)=G(x,y)h(y)$, $x,y\in \Gw$. Let {$\tilde{G}_f=\int_\Gw G(x,y)h(y)f(y)\dy$} be the Green potential of $\tilde{Q}'$ with charge $f$ and assume that 
$$\lim\limits _{x\to \bar{\infty}}\tilde{G}_{f}=0.$$ 
Consider the weight functions, for $\varepsilon\in (-1,1]$,
{\[\tilde\omega_{1+\vge}:= \frac{\tilde{Q}'[\tilde{G}_f^{(1+\vge)/2}]}{\tilde{G}_f^{(1+\vge)/2}}= \frac{1+\vge}{2}\left(\frac{1-\vge}{2}  \ \frac{h^2 |\nabla \tilde{G}_f|_A^2}{\tilde{G}_f^2} + \frac{f}{\tilde{G}_f}\right)   \,.
\]} 
By applying Theorem \ref{thm_v0} to $\tilde{Q}$, 
we infer that  
$$\tilde{Q}_{-\tilde{\omega}_{1+\vge}}(w):=\tilde{Q}(w)-\int_{\Gw} \tilde{\omega}_{1+\vge}|w|^2\dx$$ 
satisfies the conclusions $(a)$--$(d)$ of Theorem \ref{thm_v0}. Hence, the same holds for  $Q_{A,V-\tilde{\omega}_{1+\vge}/h^2}$ as $\tilde{Q}(w)=Q(hw)$ for all $w \in W^{1,2}(\Gw) \cap C_c(\Gw)$.

Therefore, we have the following result.
\begin{theorem}
    Let  $p=2$ and $Q_{A,V}$ is subcritical in $\Gw$ with a positive solution $h\in W^{1,2}_\loc(\Gw)$ of the equation $Q_{A,V}'[h]=0$ in $\Gw$. Let $\tilde{G}_f$ be the Green potential of $\tilde{Q}'$ with charge  $0\lneqq f\in {L^1(\Gw) \cap L^{2}_\loc(\Gw)}$. Then, for $\varepsilon\in (-1,1]$,
  {\[\omega_{1+\vge}\!:= \frac{\tilde{\omega}_{1+\vge}}{h^2}=     \frac{1+\vge}{2}\left(\frac{1-\vge}{2}  \ \frac{ |\nabla \tilde{G}_f|_A^2}{\tilde{G}_f^2} + \frac{f}{h^2\tilde{G}_f}\right)\] }
    is a Hardy weight for $Q_{A,V}$ in $\Gw$ which is strictly positive if $f>0$. Assume further $$\lim\limits _{x\to \bar{\infty}}\tilde{G}_{f}=0.$$
Then, the functional $Q_{A,V-\omega_{1+\vge}}$ is
\begin{enumerate}
	\item[(a)]   subcritical in $\Gw$ for all $-1< \vge <0$,
	\item[(b)]    critical in $\Gw$ with ground state $G_f^{\vge/2}$  for all $0\leq \vge \leq 1$,
	\item [(c)]    positive critical in $\Gw$ with respect to $\omega_{1+\vge}$  for all $0< \vge \leq 1$,
	\item[(d)]  null-critical in $\Gw$ with respect to $\omega_{1}$ for $\vge=0$. {In particular, if $f>0$, then $\omega_{1}$ is a strictly positive optimal Hardy weight for $Q_{A,V}$ in $\Gw$.}
\end{enumerate} 
\end{theorem}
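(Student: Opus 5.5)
The plan is to reduce everything to Theorem~\ref{thm_v0} by means of the ground state transform, which is essentially what the discussion before the statement does.

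\emph{Step 1: the transformed form is subcritical.} Let $h>0$ solve $Q'_{A,V}[h]=0$, as given by the AAP theorem. Lemma~\ref{simpl_energy} with $p=2$ gives $Q(hw)=\tilde Q(w)=\int_\Gw h^2|\nabla w|_A^2\dx$ for $w\in W^{1,2}(\Gw)\cap C_c(\Gw)$. Since $\tilde Q'=-\mathrm{div}(h^2A\nabla\,\cdot)$ is a divergence form operator with locally bounded, locally uniformly elliptic matrix $h^2A$ and no potential, it is an operator of the form $Q'_{A',0}$. Because $h$ is locally bounded above and below by Harnack, regularity is harmless.

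Subcriticality transfers through the identity: if $Q_{A,V-W}\ge0$ with $W\gneqq0$, then $\tilde Q(w)\ge\int W h^2|w|^2\dx$. This produces a Hardy weight $Wh^2\gneqq0$ for $\tilde Q$.

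\emph{Step 2: apply Theorem~\ref{thm_v0} to $\tilde Q$.} Take $\tilde G_f$, the Green potential of $\tilde Q'$ with charge $f$; it is assumed to vanish at $\bar\infty$. Applying Theorem~\ref{thm_v0} with $p=2$ and $\gamma=(1+\vge)/2$ to $\tilde Q$, the weight $\tilde\omega_{1+\vge}=\tilde Q'[\tilde G_f^\gamma]/\tilde G_f^\gamma$ enjoys (a)--(d). The explicit formula follows from Lemma~\ref{weak_lapl}: with $F(t)=t^\gamma$ we get $\gamma(1-\gamma)h^2|\nabla\tilde G_f|_A^2\tilde G_f^{\gamma-2}+\gamma\tilde G_f^{\gamma-1}f$. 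Here $1-\gamma=(1-\vge)/2$, which gives the displayed formula, and positivity holds when $f>0$. Note that in Theorem~\ref{thm_v0} the range $1-p<\vge$ is exactly $-1<\vge$.

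\emph{Step 3: transfer back.} Divide by $h^2$ to get $\omega_{1+\vge}$. For every $w$ we have
\[
Q_{A,V-\omega_{1+\vge}}(hw)=\tilde Q(w)-\int\tilde\omega_{1+\vge}|w|^2\dx .
\]
Multiplication by $h$ maps $C_c$ test functions and Hardy weights bijectively: a Hardy weight $W'$ for one form corresponds to $W'h^2$ for the other. Hence nonnegativity, subcriticality and criticality are preserved, and null-sequences correspond, since $h$ is bounded above and below on the fixed set $B$.

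The ground state of $Q_{A,V-\omega_{1+\vge}}$ is $h\tilde G_f^{\gamma}$. Its weighted norm satisfies
\[
\int\omega_{1+\vge}\,(h\tilde G_f^\gamma)^2\dx=\int\tilde\omega_{1+\vge}\,\tilde G_f^{2\gamma}\dx ,
\]
so positive- and null-criticality transfer verbatim. The statement's ``$G_f^{\vge/2}$'' should then be read as $h\tilde G_f^{(1+\vge)/2}$, and (d) yields optimality of $\omega_1$ via Definition~\ref{def_opt}.

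The main obstacle is verifying the hypotheses of Theorem~\ref{thm_v0} for $\tilde Q$ in the generality of Section~\ref{setting}, in particular that $h^2A$ is admissible and that $\tilde G_f\in C^{1,\alpha}_\loc$ is available for the coarea Lemma~\ref{coarea grad G}. For this I would invoke standard elliptic regularity, using that $h$ is a positive solution of a linear equation with $V\in L^q_\loc$, $q>n/2$. If needed I would simply add a regularity assumption on $A$, as is implicit in Lemma~\ref{coarea grad G}. The other point to check is the identification $\tilde G_f=\int G(x,y)h(y)f(y)\dy$ only as a sanity check; it is not used in the argument.
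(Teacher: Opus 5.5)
Your proposal is correct and is the paper's argument: the ground state transform $Q(hw)=\tilde Q(w)$, then Theorem~\ref{thm_v0} applied to $\tilde Q'=-\mathrm{div}(h^2A\nabla\,\cdot)$, then transfer back via $\omega_{1+\vge}=\tilde\omega_{1+\vge}/h^2$, with the ground state in (b) correctly read as $h\tilde G_f^{(1+\vge)/2}$. The $C^{1,\alpha}$ regularity issue you flag is equally implicit in the paper, and you are right not to rely on the displayed formula for $\tilde G_f$, since the Green function of $\tilde Q'$ is in fact $G(x,y)/(h(x)h(y))$ rather than $G(x,y)h(y)$.
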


\subsection{Rellich-type inequalities}
 In \cite{DFP}, the authors obtain results on the spectrum of the operator $P-W$ where $W$ is an optimal Hardy weight for a linear elliptic operator $P$ derived from the supersolution construction of the Green function rather than the Green potential as in this work. In particular, they show that the spectrum and the essential spectrum of $P-W$ are both $[1,\infty)$.

To explain the problem in the simplest case, we consider the {\em linear} symmetric elliptic operator 
\begin{equation*}%\label{P}
	Pu:=-\div\big(A \nabla u\big),
	\end{equation*}
where the values of $A=\big[a^{ij}\big]\in L^\infty_\loc(\Gw,\R^{n\times n})$ are symmetric locally uniform positive definite matrices.  We assume that $P$ is subcritical in $\Gw$, which implies that $P$ admits a minimal positive Green function $G $  in $\Gw$. Let  $0\lneqq  f\in L^{2}_\loc(\Gw)$, and assume that for some (all) $x\in\Gw$
$${G_f (x) :=\int_\Gw G(x,y)f(y)\dy <\infty \quad\mbox{ and }\quad \lim\limits _{x\to \bar{\infty}}{G}_{f}(x)=0.}$$
 Using an exhaustion argument,  it follows that $G_f$ is the minimal positive solution of the equation $Pu=f$ in $\Gw$. 
Hence, $G_f$   is the Green potential of $P$ in $\Gw$ with charge $f$.

As in \cite{DFP,Grillo}, the strictly positive optimal Hardy weight can be used to obtain Rellich-type inequality.  To this end, one needs to find a solution to the following {\em strict eikonal  inequality }
\begin{equation*}\label{h_cond}
|\nabla \gk|^2_A< W \qquad \mbox{ a.e. } \Gw,
\end{equation*}
where $W$ is a Hardy weight (see \cite{Ag82}).  Such a {\em strictly positive} optimal Hardy weight provides us with a natural candidate for $
W$ and $\gk$. Assume that our optimal Hardy-weight $W_1$ of $P$ is obtained using Theorem~\ref{pro_opt} with a Green potential $G_f$ of $P$ with $f>0$ and a ground state $v=G_f^{1/2}$.  Set
$$W_1= \frac{1}{4} \frac{|\nabla G_f |_A^2}{G_f ^2}+\frac{1}{2}\!\frac{f}{G_f}, \qquad 
\gk :=\frac{1}{2}\log G_f.$$ 
Then $W_1$ and $\gk$  clearly satisfy the above strict eikonal inequality.

Consider the Riemannian metric  
\begin{equation*}\label{eq_metric}
\qquad \ds_\gl^2:= W_1(x)\sum_{i,j=1}^{n} a_{ij}(x)\dx^i\dx^j, \quad \mbox{ where } \big[a_{ij}\big]:= \big[a^{ij}\big]^{-1}=A^{-1},
\end{equation*}
and denote by $\rho$ the induced geodesic distance to some fixed point $x_0\in \Gw$. Since 
 $$|\nabla \gk|^2_A =   \frac{1}{4}\frac{|\nabla G_f |_A^2}{G_f ^2} < W_1 \qquad \mbox{ a.e. } \Gw, $$ it follows from the proof of \cite[Lemma~10.1]{DFP} that the above Riemannian metric is {\em complete}. Exploiting \cite[Theorem~1.5]{Ag82}, we obtain the following \textit{Rellich-type inequality}. 
 \begin{corollary}[Rellich inequality]\label{cor_rellich}
Suppose that $u$ satisfies $Pu \in L^2_{\loc}(\Gw)$ and the growth condition
$$\int_{\Gw} |u|^2 W_1\exp\left(-2(1-\gd)\gr\right) \dx<\infty$$
for some $\gd>0$, then
\begin{equation*}\label{exp3}
	\int_{\Gw} |u|^2f \dx\leq
	4\int_{\Gw} \frac{|Pu|^2}{f}(G_f)^{2} \dx\,,
\end{equation*}
where $G_f$ is a Green potential  of $P$ with $f>0$. 
\end{corollary}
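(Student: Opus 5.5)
The plan is to obtain the inequality directly from Agmon's weighted $L^2$ estimate \cite[Theorem~1.5]{Ag82}, using the pair $(W_1,\gk)$ introduced above. I will use that theorem in the following form, which I take from \cite{Ag82} and do not re-derive. Suppose $W$ is a Hardy weight for $P$ in $\Gw$, i.e. $\int_\Gw|\nabla\vgf|_A^2\dx\ge\int_\Gw W\vgf^2\dx$ for all $\vgf\in C_c^\infty(\Gw)$. Suppose $h$ is locally Lipschitz with $|\nabla h|_A^2<W$ a.e. Suppose the metric $W\,a_{ij}\dx^i\dx^j$ is complete, with geodesic distance $\gr$. Then every $u$ with $Pu\in L^2_\loc(\Gw)$ satisfying the stated growth condition obeys
\begin{equation*}
\int_\Gw \big(W-|\nabla h|_A^2\big)\,|u|^2 e^{2h}\dx\;\le\;\int_\Gw \frac{|Pu|^2}{W-|\nabla h|_A^2}\, e^{2h}\dx .
\end{equation*}
The proof then reduces to checking these hypotheses for $W=W_1$ and $h=\gk=\frac12\log G_f$, and computing both sides.

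\textbf{Hypotheses.} By Theorem~\ref{pro_opt} (with $p=2$, $V=0$, so $c_0=2$ and $Q_{A,0}$ subcritical), $W_1$ is a Hardy weight for $P$, indeed an optimal one. Since $f>0$ and $G_f>0$, we have pointwise
\begin{equation*}
W_1-|\nabla\gk|_A^2=\frac{f}{2G_f}>0 .
\end{equation*}
Moreover $\gk$ is $C^{1,\alpha}_\loc$ by the regularity of $G_f$. Completeness of the metric $\ds_\gl^2$ has already been noted: it follows from the proof of \cite[Lemma~10.1]{DFP}, using $|\nabla\gk|_A^2<W_1$ and $\gk\to-\infty$ at $\bar\infty$, since $G_f\to0$ there. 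The growth assumption in the corollary is exactly Agmon's growth condition. This is the place I expect the most care to be needed: Agmon's theorem must be matched to its precise formulation (the exponent $2(1-\gd)\gr$ and the $\gd>0$ slack), and one must confirm that $G_f\to 0$ at infinity is what makes the level-set argument of \cite{DFP} go through for a Green potential rather than a Green function. If needed, I will simply repeat that argument with $G_f$ in place of $G$; the only input is that the sublevel sets $\{G_f\ge t\}$ are compact.

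\textbf{Computation.} With $e^{2\gk}=G_f$, the left-hand side of Agmon's estimate becomes
\begin{equation*}
\int_\Gw \frac{f}{2G_f}|u|^2 G_f\dx=\frac12\int_\Gw |u|^2 f\dx ,
\end{equation*}
and the right-hand side becomes
\begin{equation*}
\int_\Gw |Pu|^2\,\frac{2G_f}{f}\,G_f\dx=2\int_\Gw \frac{|Pu|^2}{f}\,G_f^2\dx .
\end{equation*}
Multiplying by $2$ gives $\int_\Gw|u|^2f\dx\le 4\int_\Gw |Pu|^2 G_f^2/f\dx$, which is the claim.
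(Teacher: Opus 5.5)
Your proposal is correct and follows the paper's proof: the paper also obtains the inequality as a special case of Agmon's estimate \cite[Theorem~1.5]{Ag82} with $W=W_1$ and $\kappa=\frac12\log G_f$, using $(W_1-|\nabla\kappa|_A^2)e^{2\kappa}=f/2$ and $(W_1-|\nabla\kappa|_A^2)^{-1}e^{2\kappa}=2G_f^2/f$. Your explicit check of the hypotheses is a harmless, slightly more careful addition; in particular you note that completeness of the metric also uses $\kappa\to-\infty$ at $\bar\infty$, which the paper leaves implicit when it appeals to \cite[Lemma~10.1]{DFP}.
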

\begin{proof}
	The result is a special case of  Agmon's estimate (\cite[Theorem~1.5]{Ag82})
	\begin{equation*}
\int_{\Gw} |u|^2(W-|\nabla\kappa|^2)e^{2\kappa} \dx\leq
\int_{\Gw}|Pu|^2 (W-|\nabla\kappa|^2)^{-1}e^{2\kappa}  \dx\,
\end{equation*}
	with $W=W_1=\frac{1}{4} \frac{|\nabla G_f |_A^2}{G_f ^2}+\frac{1}{2}\!\frac{f}{G_f}$ and $\gk=\frac{1}{2}\log G_f$ which gives $( W-|\nabla \kappa|^2)e^{2\kappa}=f/2$ and $( W-|\nabla \kappa|^2)^{-1}e^{2\kappa}= 2G_f^2/f$.
\end{proof}
We note that using the ground state transform, one can easily extend the above Rellich inequality to a corresponding statement in the case of a general subcritical Schr\"odinger operator $P$ with potential $V$.

\subsection{Open questions}
We conclude the paper with open questions. 
For the minimal positive Green function $G(x)=G(x,x_0)$ of the subcritical operator $P$ in $\Gw$ with singularity at $x_0$, we denote by 
$$W_0:=\frac{1}{4} \frac{|\nabla G |_{A}^2}{G ^2}$$
on $\Gw^*:= \Gw\setminus \{x_0\}$.  
In \cite{DFP} the authors proved that if $W_0>0$, then the spectrum and the essential spectrum of the Friedrichs extension of $W_0^{-1}P$ on $L^2(\Gw^*, W_0\dx)$ are  equal to  $[1,\infty)$. Since we have $W_1>0$ for $f>0$, it is natural to ask whether the same holds for $W_1= \frac{|\nabla G_f |_A^2}{4G_f ^2} + \frac{f}{2G_f}$ {in $\Gw$}.
%\medskip

 \begin{problems} We pose the following question.\begin{enumerate}
	\item  Are the spectrum and the essential spectrum of the Friedrichs extension of  $W_1^{-1}P$ on $L^2(\Gw, W_1\dx)$ are both equal to  $[1,\infty)$? 
	\item Does $(1,\infty)$ consist of purely continuous spectrum?
 \end{enumerate}
\end{problems}
{\bf Discussion:} We discuss where the strategy of \cite{DFP} fails in our setting. Note that using the supersolution construction $G_f^\ga$ for spectral parameter $$\gl=4\ga(1-\ga)>1\quad\mbox{ with }\quad\ga= \frac{1+ \sqrt{1-\gl}}{2}$$ and $f>0$, one obtains a complex valued ``Hardy-weight" 
$$W_\gl= \frac{|\nabla G_f |_A^2}{4G_f ^2} + \frac{\ga f}{\gl G_f} .$$   
%Indeed, let   $f>0$ such that  $G_f$ is its Green potential. 
For $\gl\in \R$, the function $ u_\gl:=G_f^\ga$ is a generalized eigenfunction of $(P-\gl W_\gl)u_\gl=0$. 
We note that the behavior of $u_\gl$ and $W_\gl$ depends on the value of $\gl$. In particular, we have the following cases:
%\medskip
\begin{itemize}
	\item For $ \gl\leq 1$, we have  $ u_\gl>0$.
	\item For $\gl>1$,   
	$\ga= \frac{1\,+\, \mathrm{i}\sqrt{\gl-1}}{2}$. 
	So, $u_\gl=u_{1+4\xi^2}= G_f^{{1}/{2}+\mathrm{i}\xi} $ satisfies
	$$ 
	 \left({P}-(1+4\xi^2)W_\gl \right)
		u_{\gl}=0,  $$
		where   $\xi\in\R$ and $W_\gl$, $u_\gl$ are complex valued functions.
\end{itemize}
Note that  for $\gl\geq 1$ and the Agmon ground state $u_1$, we have
$$|u_\gl|\leq u_1=G_f^{{1}/{2}}.$$ 
In the case, when  we consider the minimal positive Green function  $G$  of $P$ instead of $G_f$, the Hardy weight does not depend on $\gl$, so  $W_0=W_\gl$ from above. Therefore, if $W_0>0$, the Shnol-type theorem of Beckus and Pinchover \cite{BP} (see also \cite{DFP}) implies that  these $\gl\geq 1$ are in the spectrum of $W_0^{-1}P$, and the spectrum and essential spectrum are equal to  $[1,\infty).$

However, for $G_f$ with $f>0$, the potential $W_\gl$ is complex valued for $\gl>1$, and therefore, the corresponding operator is not self-adjoint. Consequently, the above Shnol-type theorem cannot be applied, and the question about the spectrum of $W_1^{-1}P$ is open.\medskip

%%%%%%%%%%
%%%%%%%%%%%%%%%%%%%%%%%%%%%%%%%%%%%%%%%%%
%\begin{center}
	{\bf Acknowledgments} % \end{center}
The authors thank Shubham Gupta for valuable discussions.  Furthermore, MK enjoyed various inspiring discussions with AI models about technical estimates in the discrete case. 
This research was supported by ISF-DFG Lead Agency research grant (ISF's No. 1531/25). MK and YP thank the Israel Institute of Advanced Studies in Jerusalem for its hospitality.
%%%%%%%%%%%%%%%%%%%%%%%%%%%%%%%%%%%%%%%%%%%%%%%%%%%%%%%%%%%%

\end{document}